\documentclass[12pt]{amsart}

%%% Packages %%%

\usepackage{amsmath,amssymb,amsthm}
\usepackage{graphicx}
\usepackage{enumerate}
%\usepackage[notcite,notref]{showkeys}
%\usepackage{amscd}
%\usepackage{showkeys} 
%\usepackage{epsfig} 
%\usepackage{color}
%\definecolor{darkgreen}{rgb}{0, 0.40, 0}

%\usepackage{amsthm} %%% Defines \theoremstyle
%\usepackage{amsmath} %%% Defines \numberwithin, \operatorname
%\usepackage{amssymb} %%% Defines \mathbb

%\usepackage{psfrag} 

\usepackage[ps2pdf]{hyperref}

%%% Fonts %%%%%%%%%%%%%%%%%%%

%% Calligraphy 

\newcommand{\calA}{\mathcal{A}}

\newcommand{\calC}{\mathcal{C}}

\newcommand{\calF}{\mathcal{F}}
\newcommand{\calG}{\mathcal{G}}

\newcommand{\calP}{\mathcal{P}}
\newcommand{\calQ}{\mathcal{Q}}
\newcommand{\calR}{\mathcal{R}}

\newcommand{\calT}{\mathcal{T}}

\newcommand{\calY}{\mathcal{Y}}
\newcommand{\calZ}{\mathcal{Z}}

%%% Math Bold

\newcommand{\HH}{\mathbb{H}}

\newcommand{\RR}{\mathbb{R}}

\newcommand{\ZZ}{\mathbb{Z}}

%%% Gothic letters
\newcommand{\gothic}{\mathfrak}

\newcommand{\gL}{{\gothic L}}
\newcommand{\gR}{{\gothic R}}

%%% Standard Math %%%%%%%%%%%%%%%%%%%%

%%% Environments %%%

\newtheorem{theorem}{Theorem}[section]
\newtheorem{proposition}[theorem]{Proposition}
\newtheorem{corollary}[theorem]{Corollary}
\newtheorem{lemma}[theorem]{Lemma}

\newtheorem{Thm}{Theorem}

\theoremstyle{definition}
\newtheorem{definition}[theorem]{Definition}

\newtheorem*{claim*}{Claim}

\newtheorem*{question*}{Question}
\newtheorem*{answer*}{Answer}
\newtheorem*{application*}{Application}

\theoremstyle{remark}
\newtheorem{remark}[theorem]{Remark}
\newtheorem*{remark*}{Remark}

%%% References
\newcommand{\secref}[1]{Section~\ref{Sec:#1}}
\newcommand{\thmref}[1]{Theorem~\ref{Thm:#1}}
\newcommand{\corref}[1]{Corollary~\ref{Cor:#1}}
\newcommand{\lemref}[1]{Lemma~\ref{Lem:#1}}
\newcommand{\propref}[1]{Proposition~\ref{Prop:#1}}

\newcommand{\defref}[1]{Definition~\ref{Def:#1}}

\newcommand{\eqnref}[1]{Equation~\eqref{Eq:#1}}

%%% operators:

\DeclareMathOperator{\size}{size}
\DeclareMathOperator{\twist}{twist}

\DeclareMathOperator{\Mod}{Mod}
\DeclareMathOperator{\Ext}{Ext}
\DeclareMathOperator{\area}{area}
\DeclareMathOperator{\diam}{diam}

\DeclareMathOperator{\I}{i}

%%% inequalities:

\newcommand{\emul}{\stackrel{{}_\ast}{\asymp}}
\newcommand{\gmul}{\stackrel{{}_\ast}{\succ}}
\newcommand{\lmul}{\stackrel{{}_\ast}{\prec}}
\newcommand{\eadd}{\stackrel{{}_+}{\asymp}}
\newcommand{\gadd}{\stackrel{{}_+}{\succ}}
\newcommand{\ladd}{\stackrel{{}_+}{\prec}}

% Relations

 % Homeomorphism

 % Isomorphism
 % Disjointness

% Spaces

 % Automorphism group
 % Outer automorphism group
 % Inner automorphism group

 % General linear
\newcommand{\SL}{\operatorname{SL}} % Special linear
 % Projective general linear
 % Projective special linear
 % Special orthogonal

 % Mapping class group
\newcommand{\AC}{\mathcal{AC}} % Arc and Curve Complex

 % Measured foliations
 % Projectively measured foliations
 % Ditto for laminations
 % Filling laminations
\newcommand{\EL}{\mathcal{EL}} % Ending laminations
\newcommand{\PML}{\mathcal{PML}}  % Projectivised measured laminations

  % Marking graph

\newcommand{\Teich}{{Teichm\"uller }} % A Nazi dude

%%% Constants 

\renewcommand{\d}{{\sf d}}

%%% Other:

%------
% Don't mess with this unless you know what you're doing.
% (It creates a squarish vertically centerd dot that can be used to
%  indicate a free parameter.)
\newcommand{\param}{{\mathchoice{\mkern1mu\mbox{\raise2.2pt\hbox{$
\centerdot$}}
\mkern1mu}{\mkern1mu\mbox{\raise2.2pt\hbox{$\centerdot$}}\mkern1mu}{
\mkern1.5mu\centerdot\mkern1.5mu}{\mkern1.5mu\centerdot\mkern1.5mu}}}
%------

\renewcommand{\setminus}{{\smallsetminus}}

\newcommand{\from}{\colon\thinspace} 
\newcommand{\ep}{\epsilon}
\newcommand{\bdy}{\partial} 

\newcommand{\sD}{{\sf D}} 
\newcommand{\sE}{{\sf E}} 
\newcommand{\sF}{{\sf F}} 
\newcommand{\sG}{{\sf G}} 
\newcommand{\sY}{{\sf Y}}

\newcommand{\bG}{\overline \calG}
\newcommand{\bL}{\overline L}
\newcommand{\bt}{\overline t}
\newcommand{\bT}{\overline T}
\newcommand{\bq}{{\overline q}}
\newcommand{\bx}{{\overline x}}
\newcommand{\by}{{\overline y}}
\newcommand{\bmu}{{\overline \mu}}
\newcommand{\blambda}{{\overline \lambda}}

\newcommand{\bI}{{\overline I}}
\newcommand{\btwist}{{\overline \twist}}

\newcommand{\rY}{ \hspace{1pt} 
\rule[-2pt]{.5pt}{8pt}_{\hspace{1pt} Y}  }
\newcommand{\stroke}[1]{ \hspace{1pt} 
\rule[-2pt]{.5pt}{8pt}_{\hspace{1pt} #1}  }

\begin{document}

\title       {Hyperbolicity in Teichm\"uller space}
\author   {Kasra Rafi}
\address {Department of Mathematics\\
               University of Oklahoma\\
               Norman, OK 73019-0315, USA}
\email      {rafi@math.ok.edu}
\maketitle
  
\begin{abstract}
We review and organize some results describing the behavior of a \Teich
geodesic and draw several applications: 1) We show that \Teich geodesics 
do not back track. 2) We show that a \Teich geodesic segment whose endpoints 
are in the thick part has the fellow travelling property. This fails when the endpoints 
are not necessarily in the thick part. 3) We show that if an edge of a \Teich
geodesic triangle passes through the thick part, then it is close to one of the other
edges.
 \end{abstract}
%\tableofcontents

\section{Introduction} \label{Sec:Intro}

Two points in \Teich space determine a unique \Teich geodesic that connects
them. One would like to understand  the behavior of this geodesic and how 
the given data, two end points $x,y$ in $\calT(S)$ \Teich of a surface $S$, 
translate to concrete information about the geodesic segment $[x,y]$ 
connecting them. Much is known about this relationship. (See 
\cite{rafi:SC, rafi:CM, rafi:LT, rafi:TT}.) The first part of the paper 
is devoted to organizing and improving some of these results 
which are scattered through several papers. Accumulation of these
results provides a complete (coarse) description of a \Teich geodesic. 
One can summarized this as follows:

\begin{Thm} \label{Thm:Description}
Let $\calG\from \RR \to \calT(S)$ be a \Teich geodesic. For every subsurface $Y$, 
there is an interval of times $I_Y$ (possibly empty) where $Y$ is 
\emph{isolated} at $\calG_t$, for $t \in I_Y$.
During this interval, the restriction of $\calG$ to $Y$ behaves like a 
geodesic in $\calT(Y)$. Outside of $I_Y$, the projection
to the curve complex of $Y$ moves by at most a bounded amount.  
\end{Thm}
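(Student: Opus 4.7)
The plan is to synthesize the subsurface analyses of \cite{rafi:SC, rafi:CM, rafi:LT, rafi:TT} into a single statement. First I would fix the notion of a subsurface $Y$ being \emph{isolated} at $\calG_t$: using the quadratic differential $q_t$ associated to $\calG_t$, this means each component of $\partial Y$ is short at $\calG_t$ (extremal length at most some $\epsilon_0$), and in the flat structure of $q_t$ the boundary $\partial Y$ is surrounded by standard expanding annuli, so that $q_t$ admits a well-defined ``restriction'' to $Y$.

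The first step is to show that the set $I_Y$ of such times is coarsely an interval. The main input is the quasi-convexity along a \Teich geodesic of $\log \Ext(\param)$ for any simple closed curve: this shows $\{t : \Ext_{\calG_t}(\partial Y) \le \epsilon_0\}$ is an interval. One then uses the estimates of \cite{rafi:SC} on the flat geometry near a short curve to confirm that the isolation property persists throughout.

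The second step analyzes $\calG$ on $I_Y$. Here I would invoke Minsky's product region theorem: in the $\epsilon_0$-thin part of $\calT(S)$, the \Teich metric agrees, up to additive error, with the sup-metric on the product of the \Teich spaces of the components of $S \setminus \partial Y$ with the horoballs corresponding to the short boundary curves. Since the \Teich flow respects this coarse factorization, the projection of $\calG_t$ to $\calT(Y)$ traces a \Teich geodesic in $\calT(Y)$ up to bounded error for $t \in I_Y$; this is essentially the content of the estimates in \cite{rafi:CM}.

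The main obstacle is Step three: bounding $\diam_{\calC(Y)} \pi_Y\bigl( \calG(J) \bigr)$ for each connected component $J$ of $\RR \setminus I_Y$. The strategy is to associate to each $\calG_t$ a short marking $\mu_t$ coming from the thick-thin decomposition of $q_t$ (as in \cite{rafi:TT}) and to argue that, so long as $\partial Y$ is not short, $\pi_Y(\mu_t)$ changes only in a controlled way as $t$ crosses between thick-thin regimes. The key estimate, proved in \cite{rafi:CM}, is the converse to Step~2: a large change in $\pi_Y(\mu_t)$ forces $\partial Y$ to be short, hence can only occur inside $I_Y$. Summing these elementary contributions shows the total change is $O(1)$ on $J$. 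Combining Steps~1--3 yields the desired coarse description of $\calG$.
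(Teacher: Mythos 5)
Your three-step architecture matches the paper's (interval of isolation; geodesic behavior inside it; bounded motion outside it), but two of the three steps rest on inputs that do not actually deliver the conclusion. The most serious gap is Step 2. Minsky's product regions theorem is an additive-error comparison of the \Teich metric with a sup metric on the thin part; it says nothing about how geodesics behave, and the projection of a geodesic to a factor of a sup-metric product need not be even a parametrized quasi-geodesic (it can stall or wander). The mechanism the paper uses is entirely different and is the real content of the theorem: one restricts the \emph{quadratic differential} $q_t$ to $Y$ by taking the representative $\sY$ with $q_t$--geodesic boundary and capping each boundary component with a locally flat once-punctured disk (\thmref{Restriction}). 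Because this restriction commutes, up to bounded error, with the $\SL(2,\RR)$--action, the path $t \mapsto q_t \rY$ is literally a \Teich geodesic in $\calQ(Y)$, and it lies a bounded distance from $x_t \rY$. Without some construction that intertwines the geodesic flow with the projection, "the flow respects the coarse factorization" is an assertion, not an argument.

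Two smaller gaps. In Step 1, quasi-convexity of $\log\Ext_{\calG_t}(\partial Y)$ only shows that the set where $\partial Y$ is short is an interval; isolation of $Y$ requires the expanding annulus \emph{on the $Y$ side} to have large modulus, and $\partial Y$ can be short on account of a large flat cylinder or the annulus on the other side. What is needed is the unimodality of $M_t(\alpha,Y)$ about the balance time $t_\alpha$ (the paper's \lemref{Convex}, from \cite{rafi:CM}); the isolation interval is then defined as $\bigcap_{\alpha\subset\bdy Y} I_{\alpha,Y}$. In Step 3, "summing elementary contributions" over a continuous family of times cannot produce an $O(1)$ bound; a locally bounded variation of $\pi_Y(\mu_t)$ is consistent with unbounded total drift. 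The paper instead anchors the projection globally: whenever some $M_t(\alpha,Y)$ is small, $\mu_t\rY$ is within bounded distance of $\lambda_-\rY$ or of $\lambda_+\rY$ (\thmref{LackOfMotion}), and a connectedness argument on the two closed sets of times where each alternative holds either reduces to one alternative or forces $d_Y(\lambda_-,\lambda_+)=O(1)$; the triangle inequality then bounds $d_Y(\mu_a,\mu_b)$ on any component of the complement of $I_Y$. Your contrapositive ("large change forces $\partial Y$ short") is a statement about the pair $\lambda_\pm$ over the whole geodesic, not about $d_Y(\mu_s,\mu_t)$ over a subsegment whose endpoints may be thin, so it cannot be applied directly to the components of $\RR\setminus I_Y$.
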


In fact, we know for which subsurfaces $Y$ the interval $I_Y$ is non-empty,
and in what order these intervals appear along $\RR$. 
And applying the theorem inductively, we can describe the restriction
of the geodesic to $Y$ during $I_Y$ (\secref{Proj}). 

In the rest of the paper we consider some of the implications of
the above theorem and we examine to what extend \Teich geodesics
behave like geodesics in a hyperbolic space. It is known that the \Teich
space is not hyperbolic; Masur showed that \Teich space is not $\delta$--hyperbolic 
\cite{masur:NH} and Minsky showed that the thin part of \Teich space
has a product like structure that resembles a space with positive curvature
\cite{minsky:PR}. However, there is a strong analogy between the geometry 
of \Teich space and that of a hyperbolic space. For example, 
the isometries of \Teich space are either hyperbolic, elliptic or 
parabolic \cite{thurston:GD, bers:EP} and the geodesic fellow is exponentily
mixing \cite{masur:IE, veech:TGF}. There is also a sense that \Teich space 
is hyperbolic relative to its thin parts; Masur and Misnky showed that electrified 
\Teich space is $\delta$--hyperbolic \cite{minsky:CCI}

% (\cite{bers:EP}, \cite{kerckhoff:AG}, \cite{masur:TP},
% \cite{wolpert:NC}, \cite{minsky:QP}). 

Each application of \thmref{Description} presented in this paper examines 
how the \Teich space equipped with the \Teich metric is similar to or different 
from a relatively hyperbolic space. Apart from their individual utility, these results 
also showcase how one can apply \thmref{Description} to answer geometric problems 
in \Teich space. 

As the first application, we show that \Teich geodesics do not \emph{backtrack}. 
This is a generalization of a theorem of Masur and Minsky \cite{minsky:CCI} 
stating that the shadow of a \Teich geodesic to the curve complex is an 
un-parametrized quasi-geodesic. We show:

\begin{Thm} \label{Thm:Shadow}
The projection of a \Teich geodesic to the complex of curves of 
any subsurface $Y$ of $S$ is an un-parametrized quasi-geodesic
in the curve complex of $Y$.
\end{Thm}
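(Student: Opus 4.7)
The plan is to use \thmref{Description} to reduce the statement to the Masur--Minsky theorem \cite{minsky:CCI} that the shadow of a \Teich geodesic in $\calT(\Sigma)$ onto the curve complex $\calC(\Sigma)$ is an un-parametrized quasi-geodesic, applied to the auxiliary \Teich space $\calT(Y)$ rather than to $\calT(S)$ itself.

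Fix a subsurface $Y \subseteq S$ and let $I_Y$ be the interval provided by \thmref{Description}, on which $Y$ is isolated along $\calG$. For $t \notin I_Y$ the subsurface projection $\pi_Y(\calG_t)$ moves by at most a uniform constant, so the portions of $\calG$ outside $I_Y$ contribute only an additive error both to the shadow and to the distance between its endpoints in $\calC(Y)$. The problem therefore reduces to showing that the restriction of the shadow to $I_Y$ is an un-parametrized quasi-geodesic in $\calC(Y)$.

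On $I_Y$, \thmref{Description} tells us that $\calG$ coarsely follows a \Teich geodesic in $\calT(Y)$. Applying Masur--Minsky to that geodesic gives that its own shadow to $\calC(Y)$ is an un-parametrized quasi-geodesic, and this conclusion transfers back to $\pi_Y(\calG_t)$ by comparing the two projections through the short marking on $Y$, which is part of what it means for $Y$ to be isolated. I would then verify that concatenating an un-parametrized quasi-geodesic with two bounded-movement tails yields an un-parametrized quasi-geodesic, via the standard criterion that consecutive projections jump by a bounded amount and the total shadow length is comparable to the endpoint distance.

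The main obstacle will be the case in which $Y$ is an annulus, since then there is no \Teich space $\calT(Y)$ to which Masur--Minsky applies. Here \thmref{Description} has to be interpreted as saying that the twisting parameter around the core of $Y$ grows coarsely linearly in time while $Y$ is isolated, and one must check directly that this linear growth produces an un-parametrized quasi-geodesic in the annular curve complex, which is quasi-isometric to $\ZZ$. Modulo this annular case, the remaining work is essentially bookkeeping organized around \thmref{Description}.
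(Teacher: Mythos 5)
Your proposal follows essentially the same route as the paper's proof (given as \thmref{No-Back-Tracking}): split at the interval of isolation $I_Y$, use the bounded-movement statement of \thmref{Proj} outside $I_Y$, apply Masur--Minsky in $\calT(Y)$ on $I_Y$ via the projection of \thmref{Restriction}, and treat annuli separately through the twisting formula. One small correction: in the annular case the twist parameter of \eqnref{Twist} does not grow coarsely linearly in $t$ --- it is coarsely monotone and saturates at a multiple of $T_\alpha$ --- but coarse monotonicity up to additive error is all that an un-parametrized quasi-geodesic in $\ZZ$ requires, and that is exactly what the paper checks.
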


This produces a sequence of markings, analogous to a resolution of a hierarchy 
\cite{minsky:CCI}, which is obtained directly froma \Teich geodesic.  

As the second application, we examine the fellow traveling properties of 
\Teich geodesics. We show:

\begin{Thm} \label{Thm:Fellow-Travel}
Consider a \Teich geodesics segment $[x,y]$ with end points $x$ and $y$ 
in the thick part. Any other geodesic segment that starts near $x$ and 
ends near $y$ fellow travels $[x,y]$. 
\end{Thm}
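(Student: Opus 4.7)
The plan is to use \thmref{Description} and \thmref{Shadow} to decompose both geodesics into matching pieces, and then fellow-travel each piece, using induction on the complexity of $S$. Write $\calG$ for the parametrization of $[x,y]$ and $\calG'$ for the parametrization of the comparison geodesic $[x',y']$, where $\d(x,x')$ and $\d(y,y')$ are bounded by some constant. Since bounded \Teich distance gives bounded subsurface projection for every $Y \subseteq S$, we have $d_Y(x,x') = O(1)$ and $d_Y(y,y') = O(1)$ for every subsurface $Y$.

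First I would show that the isolation data produced by \thmref{Description} is comparable for the two geodesics. By \thmref{Shadow}, the shadows of $\calG$ and $\calG'$ to $\calC(Y)$ are unparametrized quasi-geodesics connecting $\pi_Y(x) \to \pi_Y(y)$ and $\pi_Y(x') \to \pi_Y(y')$, respectively. Stability of quasi-geodesics in the hyperbolic complex $\calC(Y)$ then gives Hausdorff-close shadows, so $I_Y$ is (essentially) nonempty iff $I_Y'$ is; and the resulting pairing $I_Y \leftrightarrow I_Y'$ preserves the order along the geodesics.

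Next, on each matched pair $(I_Y, I_Y')$, the restrictions of $\calG$ and $\calG'$ to $Y$ behave like \Teich geodesics in $\calT(Y)$. Their endpoints lie in the thick part of $\calT(Y)$ by the defining property of ``isolation'': $Y$ being isolated forces $\bdy Y$ to be short and the $Y$-coefficient to be away from the thin part of $\calT(Y)$ at the ends of $I_Y$. Induction on $\xi(S)$ then gives fellow-traveling of the two restrictions in $\calT(Y)$. To assemble these restricted fellow-travelings into fellow-traveling in $\calT(S)$, I would use the product-region description of the thin part of $\calT(S)$: the remaining coordinates are the lengths and twists around the curves $\alpha \in \bdy Y$ that become short, and these are determined up to bounded error by the annular projections $d_\alpha(x,y) \approx d_\alpha(x',y')$ from the first paragraph together with the endpoint lengths (which are comparable since $x \approx x'$ and $y \approx y'$).

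The main obstacle will be the ``thick complement'': times $t$ at which $\calG_t$ is in the thick part of $\calT(S)$ and lies outside every $I_Y$. Here I would argue that after matching up the structure above, the remaining picture reduces to two geodesics whose entire trajectory stays in the thick part; such geodesics are well-known to fellow-travel because \Teich geodesics are \emph{contracting} in the thick part. Finally, this is also where the hypothesis that $x$ and $y$ themselves are thick is used essentially: if $x$ or $y$ lay in a Margulis tube of some curve $\alpha$, the annular projection $d_\alpha(x,y)$ would not be pinned down at the endpoints, and the twist coordinate could differ by an unbounded amount between $\calG$ and $\calG'$, so the fellow-traveling conclusion would genuinely fail (as the statement warns).
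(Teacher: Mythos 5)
Your route differs from the paper's, which does \emph{not} induct on complexity: it compares $\calG(t)$ and $\bG(t)$ directly at each time $t$ and verifies the four conditions of \corref{Bounded-Distance}. The engine of that proof is quantitative: since $x,\bx$ (and $y,\by$) are thick, part (1) of \thmref{Length} gives $\Ext_x(\alpha)\emul \ell_{q_0}(\alpha)^2$, so flat lengths of every curve are comparable at both ends; combined with $\ell_{q_t}(\alpha)\emul L_\alpha\cosh(t-t_\alpha)$ this forces $L_\alpha\emul\bL_\alpha$ \emph{and} $t_\alpha\eadd\bt_\alpha$, i.e.\ the two geodesics are synchronized in time. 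Everything else (matching thick--thin decompositions, comparable moduli of flat cylinders, the twist estimate via \eqnref{Twist}) is downstream of that synchronization.

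This points to the genuine gap in your plan. Your first step only matches the \emph{combinatorial} data: stability of quasi-geodesic shadows in $\calC(Y)$ tells you the same subsurfaces are isolated along both geodesics and in the same order, but says nothing about \emph{when} each $I_Y$ occurs in $[0,l]$. That is exactly the failure mode exploited by the paper's counterexample (\thmref{Not-FL}): there $I_Y=[0,\d]$ for one geodesic and $[\d,2\d]$ for the other, while all endpoint subsurface projections agree up to $O(1)$. So order-preservation of the pairing $I_Y\leftrightarrow I_Y'$ cannot yield \emph{parametrized} fellow traveling; you must extract from the thickness of $x$ and $y$ the metric statement that the balanced times satisfy $t_\alpha\eadd\bt_\alpha$, and your proposal never does this (your closing remark locates the role of thickness only in pinning down annular twists, which is not where the counterexample lives --- its endpoints fail to be thick because $\gamma$ is short, and the resulting ambiguity is in the \emph{timing} of the isolation interval, not the twist). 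Two secondary problems: (i) your induction hypothesis is not available, since isolation of $Y$ at the endpoints of $I_Y$ only says the expanding annuli around $\bdy Y$ have modulus about $m_0$; it does not place $\calG|_Y$ in the thick part of $\calT(Y)$ there, so \thmref{Fellow-Travel} cannot be invoked for the restricted geodesics as stated; (ii) the appeal to contraction on the ``thick complement'' presupposes that $\bG$ is also thick at the corresponding times, which is part of what must be proved (Condition (1) in the paper's argument) and again rests on the flat-length synchronization.
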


In contrast to above, we can provide examples where:

\begin{Thm}  \label{Thm:Counter}
When the end points of a geodesic segment are allowed to be in the thin part, 
the above theorem does not hold. 
\end{Thm}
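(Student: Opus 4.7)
The plan is to exploit the product-like structure of the thin part of \Teich space
described in Minsky's product region theorem \cite{minsky:PR}. In the
$\ep$-thin part of a simple closed curve $\alpha$, the \Teich metric agrees,
up to additive error, with the sup metric on
$\HH_\alpha \times \calT(S \setminus \alpha)$, where $\HH_\alpha$ is the upper
half plane parametrising the length and Fenchel--Nielsen twist of $\alpha$. In
a sup product, geodesics are famously non-unique: the motion in the shorter
factor may be placed anywhere along the time interval needed to cover the
longer factor. This non-uniqueness is the source of the counterexample.

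For the setup, let $\alpha$ be a non-separating simple closed curve on $S$
with $Y = S \setminus \alpha$ of positive complexity, and let $\ell > 0$ be
small. I would choose $x_0, y_0$ in the $\ep$-thin part of $\alpha$, both
with $\alpha$-length $\ell$, whose projections to $\calT(Y)$ are at distance
$L$, and whose $\HH_\alpha$-coordinates are at hyperbolic distance $L/2$. The
sup-geodesic from $x_0$ to $y_0$ then has length $L$, but the motion in the
$\HH_\alpha$ factor may be placed in any subinterval of duration $L/2$ within
the total time $[0, L]$; the actual \Teich geodesic selects one such placement,
determined by the initial quadratic differential.

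Next, I would construct a nearby pair $x_1, y_1$ with
$d_\calT(x_0, x_1), d_\calT(y_0, y_1) = O(1)$ so that the geodesic
$[x_1, y_1]$ places its twisting phase at a substantially different time. The
perturbation is effected by applying a controlled number of Dehn twists
around $\alpha$ to the reference marking at one of the endpoints; since a
single Dehn twist costs only $O(\ell)$ in \Teich distance, many twists fit
inside a bounded \Teich ball when $\ell$ is small. The effect on the \Teich
geodesic should be to shift the subinterval $I_\alpha$ of \thmref{Description}
to a new location along the geodesic. At an intermediate time when one
geodesic has completed its twisting while the other has not yet begun, the
two corresponding points disagree in their $\HH_\alpha$-coordinate by
$\Theta(L/2)$.

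The main obstacle lies in converting this coordinate gap into a genuine
\Teich separation. Within the $\alpha$-thin part a large
$\HH_\alpha$-discrepancy is cheap in \Teich distance, so a naive coordinate
comparison does not suffice. One must identify a moment at which at least one
of the geodesics has exited the $\ep$-thin part of $\alpha$ --- for instance,
by arranging for the twisting phase to coincide with a time when $\alpha$ is
not uniformly shortest, or by exhibiting a second subsurface whose projection
distinguishes the two positions --- so that the separation cannot be absorbed
by the product structure. Arranging the geometry of the endpoints $x_0, y_0$
to produce such a moment, and then verifying the failure of fellow travelling
quantitatively as $L \to \infty$ and $\ell \to 0$, is the technically delicate
part of the construction.
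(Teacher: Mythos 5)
Your high-level diagnosis is the right one: the failure of fellow traveling originates in the product-like structure of the thin part, and a counterexample must shift the time interval during which the geodesic ``does its work'' in one factor while keeping the endpoints close. But as a proof this has a genuine gap, and it sits exactly where you place it --- the ``technically delicate part'' you defer at the end is the entire content of the argument. Concretely: (i) you never construct the endpoints, so neither of the two quantitative claims (endpoints $O(1)$ apart, some pair of parameter-matched interior points far apart) is verified; (ii) the mechanism you lean on, shifting the twisting phase in $\HH_\alpha$ by applying Dehn twists at an endpoint, is exactly the one you then concede cannot be cashed in, since a twist-number discrepancy at comparable $\alpha$-length is absorbed by the sup metric, and you give no construction forcing the two geodesics to occupy genuinely far-apart points of the product at the same time. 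Asserting that ``the effect should be to shift $I_\alpha$'' is also unsubstantiated: the balanced time $t_\alpha$ is governed by the horizontal and vertical flat lengths of $\alpha$, not directly by the twist, so it is not clear your perturbation relocates the excursion at all.

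The paper's construction realizes the second of the two escape routes you mention only in passing: it puts the shiftable work in a positive-complexity factor $\calT(Y)$ and detects the separation with a subsurface projection rather than with the $\HH_\alpha$ coordinate. Take $S$ of genus $2$, $\gamma$ separating, $Y$ a one-holed torus. Glue a fixed flat torus (the $Z$ side) along a slit of size $ce^{-\d/2}$ to a $\delta$-scaled slit torus lying on the axis of a pseudo-Anosov of the torus, positioned at time $-\d/2$ for $q_0$ and at time $-3\d/2$ for $\bq_0$. Then $I_Y=[0,\d]$ for one geodesic and $[\d,2\d]$ for the other, so at time $\d$ one copy of $Y$ has traversed distance $\emul\d$ along the axis in $\calT(Y)$ while the other has not started; hence $d_Y(q_\d,\bq_\d)\emul\d$, and \thmref{Distance} gives $d_\calT(x_\d,\bx_\d)\gmul\d$. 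Closeness of the endpoints is checked with \corref{Bounded-Distance}: all relevant projections and twists agree up to $O(1)$, and $1/\Ext(\gamma)$ is $\eadd\log(1/\delta)$ for one and $\log(1/\delta)-\d$ for the other, which are comparable once $\delta$ is small. The scaling factor $\delta$ and the exponentially small slit are what allow the active interval of $Y$ to be displaced by $\d$ at a cost of only $O(1)$ at the endpoints; your Dehn-twist perturbation has no analogue of this step, and supplying one is precisely what is missing from your plan.
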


As our third application, we prove that geodesic triangles are slim
while they pass through the thick part of \Teich space, suggesting similarities 
between \Teich space and relatively hyperbolic groups. 

\begin{Thm}  \label{Thm:Thin}
For a geodesic triangle $\triangle(x,y,z)$ in \Teich space, if
a large segment of $[x,y]$ is in the thick part, then it is either 
close to $[x,z]$ or $[y,z]$.
\end{Thm}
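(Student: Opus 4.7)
The plan is to pass to the curve complex $\calC(S)$, use slim triangles there, and then lift the resulting matching back to Teichmüller space via \thmref{Fellow-Travel}. Let $[p,q] \subset [x,y]$ denote the thick segment and assume its Teichmüller length is at least some large threshold $L$ to be chosen.

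Because $[p,q]$ lies in the thick part, \thmref{Description} prevents any proper subsurface $Y$ from being isolated at any time during $[p,q]$ — isolation would force a short boundary curve, contradicting thickness. Consequently all of the motion of $\calG$ during this interval is registered by $\pi_S$, and up to uniform error the length of $[p,q]$ bounds $d_{\calC(S)}(\pi_S(p),\pi_S(q))$ from below; choosing $L$ large makes this curve-complex distance arbitrarily large. Now \thmref{Shadow} says the three sides of $\triangle(x,y,z)$ project to unparametrized quasi-geodesics in the $\delta$-hyperbolic space $\calC(S)$, and so form a uniformly slim triangle. Hence a subarc of $\pi_S([p,q])$ of still-arbitrarily-large $\calC(S)$-diameter runs $\calC(S)$-close to a subarc of (without loss of generality) $\pi_S([x,z])$; pick $p',q' \in [x,z]$ whose $\calC(S)$-projections are close to $\pi_S(p),\pi_S(q)$.

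Next, upgrade this curve-complex matching to a Teichmüller matching. Applying \thmref{Description} to $[x,z]$, the coincidence of $\calC(S)$-projections at the pairs $\{p,p'\}$ and $\{q,q'\}$, combined with inductive control of the subsurface projections along $[x,z]$ (using the hierarchical structure provided by \thmref{Description}), should show that $p'$ and $q'$ themselves lie in the thick part and that $d_{\calT}(p,p')$ and $d_{\calT}(q,q')$ are uniformly bounded. Then \thmref{Fellow-Travel} applies to $[p,q]$ and the matched sub-segment $[p',q'] \subset [x,z]$: both are Teichmüller geodesic segments with endpoints in the thick part, pairwise close, and so they fellow-travel. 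This is the asserted closeness.

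The main obstacle is the lift from $\calC(S)$-matching to Teichmüller-matching: two thick points in $\calT(S)$ with the same projection to $\calC(S)$ can nevertheless be far apart, because their markings may differ in projections to proper subsurfaces. The remedy is precisely the inductive structure of \thmref{Description}, which localises where and when such discrepancies can accrue along a geodesic. A secondary technical point is the case where the thick segment has small $\calC(S)$-projection diameter; this is ruled out by the fact that a thick Teichmüller geodesic projects to an honest quasi-geodesic in $\calC(S)$, so the first step is effective once $L$ is taken large enough.
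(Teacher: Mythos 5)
Your first step matches the paper's: project to $\AC(S)$, use slimness of the triangle of shadows (via \thmref{Shadow}) and the fact that a thick segment has a long, parametrized quasi-geodesic shadow, to match the thick segment with a piece of (say) $[x,z]$ in the curve complex. But the step you flag as "the main obstacle" --- showing that the matched points $p',q'$ on $[x,z]$ are themselves thick and at bounded Teichm\"uller distance from $p,q$ --- is the entire content of the proof, and "the inductive structure of \thmref{Description} should show" it is not an argument. The paper closes this gap with a specific chain of tools you never invoke. First, to certify that the matched point $u\in[x,z]$ is thick, it uses \thmref{M-Large}: a curve gets short along $[x,z]$ only if it bounds a subsurface $Y$ filled by subsurfaces $Z$ with $d_Z(\mu_x,\mu_z)$ large; such $Y$ would have $\partial Y$ near $\mu_u$ in $\AC(S)$. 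The projection $d_Y(\mu_y,\mu_z)$ is killed by Masur--Minsky's bounded geodesic image theorem (\thmref{Bounded}) --- and note this requires choosing $w$ (hence $u$) \emph{far from the third side} $[\mu_y,\mu_z]$ in $\AC(S)$, a positioning requirement your write-up omits. The projection $d_Y(\mu_x,\mu_y)$ is killed by contradiction: were it large, $\partial Y$ would be short somewhere on $[x,y]$, forcing $\partial Y$ far from $\mu_w$ along the quasi-geodesic shadow, contradicting $d_S(\partial Y,\mu_u)=O(1)$.

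Second, to bound $d_\calT(u,w)$ the paper does not (and cannot directly) quote \thmref{Fellow-Travel}, since that theorem \emph{presupposes} the endpoint pairs are at bounded Teichm\"uller distance --- exactly what is to be proved. Instead it shows $d_Y(\mu_u,\mu_w)=O(1)$ for \emph{every} subsurface $Y$, using the same bounded-geodesic-image argument together with the no-backtracking theorem (\thmref{No-Back-Tracking}) to convert $d_Y(\mu_x,\mu_y)=O(1)$ into $d_Y(\mu_x,\mu_w)=O(1)$ and $d_Y(\mu_x,\mu_z)=O(1)$ into $d_Y(\mu_x,\mu_u)=O(1)$, and then concludes via the distance-formula criterion \corref{Bounded-Distance}. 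Your proposal correctly identifies where the difficulty lies but supplies none of this machinery, so as written it is an outline with the central step missing. (A minor additional point: the theorem as proved in the paper only produces a single point $w$ on the thick segment close to another side; upgrading to fellow-travelling of the whole subsegment, as you propose, is a legitimate strengthening but would require running the endpoint-matching argument at both ends before \thmref{Fellow-Travel} can be applied.)
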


\subsection*{Organization of the paper}
In \secref{Comb-Description}, we make the notion of coarsely describing
a point in \Teich space precise. This means to record enough information
so that one can estimate the length of any curve on the surface and the 
distance between two points in \Teich space. It turns out that it is sufficient 
to keep track of which curves are short as well as the length and 
the twisting parameter of the short curves. 

A \Teich geodesic is the image of a quadratic differential under the
\Teich geodesic flow. In \secref{Quadratic} we discuss
how one can translate the information given by the flat structure of a
quadratic differential to obtain the combinatorial information needed
to describe a point in $\calT(S)$. 

The precise statement for the description of a \Teich geodesic and some 
related statements are given \secref{Proj}. \thmref{Shadow} is proven in 
\secref{Backtrack}, Theorems~\ref{Thm:Fellow-Travel} and \ref{Thm:Counter} 
are proven in \secref{Fellow-Travel}, and \thmref{Thin} is proven 
in \secref{Thin}.

\subsection*{Notation}  
The notation $A \emul B$ means that the ratio $A/B$ is bounded both
above and below by constants depending on the topology of $S$ only.
When this is true we say $A$ is \emph{comparable} with $B$ or
$A$ and $B$ are comparable. The notation $A\lmul B$ means that $A/B$ 
is bounded above by a constant depending on the topology of $S$. Similarly, 
$A \eadd b$ means $|A-B|$ is uniformly bounded and
$A \ladd B$ means $(B-A)$ is uniformly bounded above in both cases
by a constant that depend only on the topology of $S$. 

\subsection*{Acknowledgements} I would like to thank Saul Schleimer
for his great help and encouragement.

\section{Combinatorial description of a point in \Teich space}
\label{Sec:Comb-Description}

In this section, we discuss the notion of a marking which provides 
a combinatorial description of a point in \Teich space 
(see \defref{Marking}). Given a description of a point 
$x$ in \Teich space we are able to estimate the extremal length of 
any curve at $x$ (\thmref{Length}). Also, given the description 
of two points $x,y \in \calT(S)$, we are able to estimate the \Teich distance 
between them (\thmref{Distance}).  We first establish terminology and the 
definitions of some basic concepts.

\subsection{\Teich metric}
Let $S$ be a compact surface of hyperbolic type possibly with boundary.
The \Teich space $\calT(S)$ is the space of all conformal structures
on $S$ up to isotopy. In this paper, we consider only the \Teich
metric on $\calT(S)$. For two points $x,y \in \calT(S)$ the \Teich
distance between them is defined to be
$$
d_\calT(x,y) = \frac 12 \log \max_f K_f, 
$$
where $f \from x \to y$ ranges over all quasi-conformal maps 
from $x$ to $y$ in the correct isotopy class and $K_f$ is 
the quasi-consofmal constant of the map $f$. (See 
\cite{gardiner:QT, hubbard:TT} for background information.)
A geodesic in this metric is called a \Teich geodesic. 

\subsection*{Arcs and curves} 
By a \emph{curve} in $S$ we mean a free isotopy class of an essential simple 
closed curve and by an \emph{arc} in $S$ we mean a proper isotopy class of 
an essential simple arc.  In both cases, 
\emph{essential} means that the given curve or arc is neither isotopic 
to a point nor it can be isotoped to $\bdy S$. The definition of an arc 
is slightly different when $S$ is an annulus. In this case, an \emph{arc} 
is an isotopy class of a simple arc connecting the two boundaries of $S$, 
relative to the endpoints of the arc.   We use $\I(\alpha, \beta)$ to denote the 
geometric intersection number between arcs or curves $\alpha$ and $\beta$
and we refer to it simply as the intersection number. 

Define the arc and curve graph $\AC(S)$ of $S$ as follows: the vertices 
are essential arcs and curves in $S$ and the edges are pairs of vertices that 
have representatives with disjoint interiors.  Giving the edges length one turns 
$\AC(S)$ into a connected metric space. The following is contained in 
\cite{minsky:CCI, minsky:CCII, klarreich:BC}

\begin{theorem}
The graph $\AC(S)$ is locally infinite, has infinite diameter and is Gromov 
hyperbolic. Furthermore, its boundary at infinity can be identified with 
$\EL(S)$, the space of ending laminations of $S$. 
\end{theorem}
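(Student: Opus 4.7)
The plan is to reduce everything to the curve complex $\calC(S)$ and then invoke the cited theorems. I would first construct a natural coarse map $\Phi \from \AC(S) \to \calC(S)$ by sending a curve to itself and sending an arc $\alpha$ to the (finite) set of essential boundary components of a regular neighborhood of $\alpha \cup \bdy S$. Since disjoint vertices in $\AC(S)$ map to vertices at uniformly bounded distance in $\calC(S)$, the map $\Phi$ is coarsely Lipschitz; and because curves already lie in the image while each curve has uniformly bounded preimage under $\Phi$, a coarse inverse exists. Hence $\Phi$ is a quasi-isometry onto its image (which is cobounded in $\calC(S)$).

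Local infiniteness I would verify by elementary means: cutting $S$ along any essential arc or curve $\alpha$ produces a (possibly disconnected) subsurface which, outside of the lowest-complexity exceptions, supports infinitely many pairwise non-isotopic essential simple closed curves, each disjoint from $\alpha$ in $S$. The annulus requires a separate treatment, where one notes that the iterated Dehn twists of a fixed arc along the core curve give infinitely many arcs, each at bounded distance from its neighbor in $\AC(S)$. Infinite diameter and Gromov hyperbolicity of $\calC(S)$ are exactly the theorem of Masur and Minsky \cite{minsky:CCI}, and both properties transfer from $\calC(S)$ to $\AC(S)$ through the quasi-isometry $\Phi$; the annular case is handled directly by observing that the annular arc complex is quasi-isometric to $\ZZ$, hence trivially hyperbolic with infinite diameter.

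For the identification of the boundary, Klarreich's theorem \cite{klarreich:BC} identifies the Gromov boundary of $\calC(S)$ with the space $\EL(S)$ of ending laminations. Since a quasi-isometry between Gromov hyperbolic spaces induces a homeomorphism of their Gromov boundaries, $\Phi$ transports this identification to $\AC(S)$. The only genuine obstacle in the argument is the hyperbolicity of $\calC(S)$ itself, which is deep and is taken directly from \cite{minsky:CCI, minsky:CCII}; everything else amounts to routine bookkeeping needed to pass between the curve complex and the arc-and-curve complex, with some care required to deal with the annulus and other sporadic low-complexity surfaces.
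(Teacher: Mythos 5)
Your argument is correct and is essentially what the paper intends: the paper offers no proof of this statement, simply asserting it is "contained in" Masur--Minsky and Klarreich, and your reduction via the standard quasi-isometry $\AC(S)\to\calC(S)$ (with the annulus and sporadic low-complexity cases treated separately) is exactly the routine bookkeeping those citations presuppose. No gap.
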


Recall that, $\EL(S)$ is the space of $\emph{irrational}$ laminations
in $\PML(S)$ after forgetting the measure. An irrational lamination is one 
that has non-zero intersection number with every curve. 

\subsection*{Measuring the twist}
It is often desirable to measure the number of times a curve
$\gamma$ twists around a curve $\alpha$.  This requires us to choose 
a notion of \emph{zero twisting}.  The key example is the case 
where $S$ is an annulus with a core curve $\alpha$. Then $\AC(S)$ is 
quasi-isometric to $\ZZ$.  Choose an arc $\tau \in \AC(S)$ to serve as
the origin.  Then the \emph{twist} of $\gamma \in \AC(S)$ about 
$\alpha$ is 
$$
\twist_\alpha(\gamma, \tau) = \I(\gamma, \tau),
$$
relative to choice of origin $\tau$.

In general, if $\alpha$ is a curve in $S$ let $S^\alpha$ be the 
corresponding annular cover.  A notion of zero twisting
around $\alpha$ is given by a choice of arc 
$\tau \in \AC(S^\alpha)$.  Then, for every 
$\gamma \in \AC(S)$ intersecting $\alpha$ essentially, 
we define
$$
\twist_\alpha(\gamma, \tau) = \I(\tilde \gamma, \tau),
$$
where $\tilde \gamma$ is any essential lift of $\gamma$ 
to $S^\alpha$.  Since there may be several choices for 
$\tilde \gamma$, this notion of twisting is well defined up
to an additive error of at most one. 

A geometric structure on $S$ often naturally defines a notion
of zero twisting. For example, for a given point $x \in \calT(S)$
and a curve $\alpha$, we can define twisting around $\alpha$ in $x$ as
follows: lift $x$ to a  the conformal structure $x^\alpha$ on $S^\alpha$. 
Consider the hyperbolic metric associated to $x^\alpha$ and choose $\tau$ in 
$x^\alpha$ to be any hyperbolic geodesic perpendicular to $\alpha$.  
Now, for every curve $\gamma$ intersecting $\alpha$ non-trivially, define
$$
\twist_\alpha(\gamma, x) = \twist_\alpha(\gamma, \tau) = \I(\tilde \gamma, \tau).
$$ 
Similarly, for a quadratic differential $q$ on $S$ we can define 
$\twist_\alpha(\gamma, q)$; lift $q$ to a singular Euclidean metric $q^\alpha$ 
and choose $\tau$ to be any Euclidean perpendicular arc to $\alpha$.  
(See \secref{Quadratic} for the definition of the Euclidean metric associated 
to $q$.) 

Similarly, any  foliation, arc or curve $\lambda$
intersecting $\alpha$ essentially defines a notion of zero twisting.  
Since the intersection is essential the lift $\lambda^\alpha$ of 
$\lambda$ to $S^\alpha$ contains an essential arc which we may 
use as $\tau$. Anytime two geometric objects define notions of zero
twisting, we can talk about the relative twisting between them. 
For example, for two quadratic differentials $q_1$ and $q_2$ and a curve
$\alpha$, let $\tau_1$ be the arc in $q_1^\alpha$ that is perpendicular
to $\alpha$  and $\tau_2$ be the arc in $q_2^\alpha$ that is perpendicular
to $\alpha$. Considering both these arcs in $S^\alpha$, it makes sense
to talk about their geometric intersection number. We define:
$$
\twist_\alpha(q_1,q_2) =\I(\tau_1, \tau_2).
$$
The expression $\twist_\alpha(x_1,x_2)$ for Riemann surfaces $x_1$
and $x_2$ is defined similarly. 

\subsection*{Marking} 
Our definition of \emph{marking} differs slightly from that of 
\cite{minsky:CCII} and contains more information.

\begin{definition} \label{Def:Marking}
A marking on $S$ is a triple 
$\mu=(\calP, \{l_\alpha\}_{\alpha \in\calP}, \{\tau_\alpha\}_{\alpha \in \calP})$ 
where
\begin{itemize}
\item $\calP$ is a pants decomposition of $S$. 
\item For $\alpha \in \calP$, $l_\alpha$ is a positive real number
which we think of as the length of $\alpha$. 
\item For $\alpha \in \calP$, $\tau_\alpha$ is an arc in the annular cover
$S^\alpha$ of $S$ associated to $\alpha$, establishing a notion of
zero twisting around $\alpha$. 
\end{itemize}
\end{definition} 

For a curve $\alpha$ in $S$ and $x \in \calT(S)$, we define the extrema length
of $\alpha$ in $x$ to be
$$
\Ext_x(\alpha) = \sup_{\sigma \in [x]} \frac{\ell^2_\sigma(\alpha)}{\area(\sigma)}.
$$
Here, $\sigma$ ranges over all metric in the conformal class $x$ and
$\ell_\sigma(\alpha)$ is the infimum of the $\sigma$--length of
all representatives of the homotopy class of the curve $\alpha$. 
Using the Extremal length, we define a map from $\calT(S)$ to the space of 
markings as follows:  For any $x\in\calT(S)$, let $\calP_x$ be the pants 
decomposition with the shortest extremal length in $x$ obtained using the 
greedy algorithm. For $\alpha \in \calP_x$, let $l_\alpha = \Ext_x(\alpha)$.  
As in the discussion of zero twist above, let $\tau_\alpha$ be any geodesic 
in $S^\alpha$ that is perpendicular to $\alpha$ in $x^\alpha$. We call this the 
\emph{short marking at $x$} and denote it by $\mu_x$.

As mentioned before, we can compute the extremal length of any curve 
in $x$ from the information contained in $\mu_x$ up to a multiplicative error. 
It follows from \cite{minsky:PR}  that:

\begin{theorem} %\marginpar{\tiny Add reference here. May be A\&J.}
 \label{Thm:Length-Formula}
For every curve $\gamma$, we have
$$
\Ext_x(\gamma) \emul \sum_{\alpha \in \calP} 
\left(\frac1{l_\alpha}
   + l_\alpha \cdot \twist_\alpha(\gamma, \tau_\alpha)^2 \right) \I(\alpha, \gamma)^2. 
$$
\end{theorem}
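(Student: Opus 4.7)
The plan is to prove both directions of the comparison $\emul$ separately, following Minsky's approach in \cite{minsky:PR}. The two summands $1/l_\alpha$ and $l_\alpha \twist_\alpha(\gamma,\tau_\alpha)^2$ correspond to two ``directions'' in the annular factor of \Teich space at $x$: a ``height'' direction (inverse extremal length of the collar of $\alpha$) and a ``horizontal'' direction (twist around $\alpha$). The prefactor $\I(\alpha,\gamma)^2$ reflects that $\gamma$ crosses the collar of $\alpha$ that many times. The common geometric model used in both bounds decomposes $x$ into flat Euclidean annuli $A_\alpha$ of modulus $\emul 1/l_\alpha$ around each $\alpha\in\calP$, together with a bounded-geometry union of pairs of pants on the complement.

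For the lower bound, I would realize this model as an actual metric $\sigma$ in the conformal class of $x$: around each $\alpha$ glue a Euclidean cylinder $A_\alpha$ of circumference $c_\alpha \emul \sqrt{l_\alpha}$ and height $h_\alpha \emul 1/\sqrt{l_\alpha}$ (with the vertical direction chosen so that the perpendicular arc agrees with $\tau_\alpha$), and use a fixed bounded-geometry hyperbolic metric on each pair-of-pants piece of the complement. Then $\area(\sigma) \emul 1$. Each of the $\I(\alpha,\gamma)$ arcs of $\gamma\cap A_\alpha$ is homotopic rel $\bdy A_\alpha$ to a Euclidean segment of length
$$\sqrt{h_\alpha^2 + (\twist_\alpha(\gamma,\tau_\alpha)\, c_\alpha)^2} \emul \sqrt{1/l_\alpha + l_\alpha \twist_\alpha(\gamma,\tau_\alpha)^2}.$$
Combining $\Ext_x(\gamma) \geq \ell_\sigma(\gamma)^2/\area(\sigma)$ with the elementary inequality $(\sum a_\alpha)^2 \geq \sum a_\alpha^2$ for non-negative $a_\alpha$ yields the lower bound, once one checks that the contribution of the complement to $\ell_\sigma(\gamma)$ is $O(\sum_\alpha \I(\alpha,\gamma))$ and hence dominated by the annular contribution (since each coefficient $1/l_\alpha + l_\alpha \twist_\alpha^2 \gmul 1$).

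For the upper bound I would work locally around each short curve. By the collar lemma, for each $\alpha$ with $l_\alpha$ small there is a collar annulus in $x$ of modulus $\emul 1/l_\alpha$, and a direct extremal length computation in a flat cylinder shows that the family of crossing arcs homotopic to $\gamma\cap A_\alpha$ contributes at most $\I(\alpha,\gamma)^2(1/l_\alpha + l_\alpha \twist_\alpha(\gamma,\tau_\alpha)^2)$ to $\Ext_x(\gamma)$. Summing these, together with a bounded contribution from the thick complement, and using that $|\calP|$ depends only on the topology of $S$, gives the upper bound. The cleanest way to assemble these local estimates into a global bound is Minsky's product region theorem from \cite{minsky:PR}, which shows that in the thin part $\calT(S)$ coarsely decomposes as a product of $\calT(S\setminus\calP)$ with upper half planes $\HH_\alpha$ parametrized by $(\twist_\alpha, 1/l_\alpha)$.

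The main obstacle is the upper bound: unlike the lower bound, it concerns the (non-explicit) extremal metric for $\gamma$, so one cannot just write down a metric and compute. Controlling how the extremal length of $\gamma$ decomposes across the various annular and complementary pieces is the heart of the argument, and Minsky's product region theorem provides exactly this control. A secondary technical point is ensuring that the ``zero twist'' reference $\tau_\alpha$ from the marking $\mu_x$ is compatible with the vertical direction of the cylinders in $\sigma$, up to bounded twist error — a routine matching of conventions, but one that affects the identification of $\twist_\alpha(\gamma,\tau_\alpha)$ with the horizontal shift of a crossing arc in $A_\alpha$.
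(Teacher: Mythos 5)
Your proposal is correct in outline and follows essentially the same route as the paper, which offers no independent argument for this estimate but derives it directly from Minsky's extremal length estimates and product region theorem in \cite{minsky:PR} — exactly the ingredient you invoke for the upper bound, with the standard candidate-metric (length--area) computation giving the lower bound. The points you flag (matching the twist convention of $\tau_\alpha$ with the vertical direction of the collar cylinder, and absorbing the thick-part contribution using $1/l_\alpha \gmul 1$ for the greedy shortest pants decomposition) are indeed the only technical checks needed beyond the citation.
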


\subsection*{Subsurface Projection}
To compute the distance between two points $x,y \in \calT(S)$ we need 
to introduce the concept of subsurface projection.  We call a collection 
of vertices in $\AC(S)$ having disjoint representatives a \emph{multicurve}. 
For every proper subsurface $Y \subset S$ and any multicurve
$\alpha$ in $\AC(S)$ we can project $\alpha$ to $Y$ to obtain
a multicurve in $\AC(Y)$ as follows: let $S^Y$ be the cover of $S$ corresponding 
to $\pi_1(Y) < \pi_1(S)$ and identify the Gromov compactification of $S^Y$ with $Y$.
(To define the Gromov compactification, one needs first to pick a metric on
$S$. However, the resulting compactification is independent of the metric. 
Since $S$ admits a hyperbolic metric, every essential curve in $S$ lifts to an 
arc which has a well defined end points in the Gromov boundary of $S^Y$.)
Then for $\alpha \in \AC(S)$, the projection $\alpha \rY$ is 
defined to be the set of lifts of $\alpha$ to $S^Y$ that are essential curves or 
arcs. Note that $\alpha \rY$ is a set of diameter one in $\AC(Y)$ since all the 
lifts have disjoint interiors. 

For markings $\mu$ and $\nu$, define 
$$
d_Y(\mu,\nu)= \diam_{\AC(Y)}(\calP \rY \cup \calR \rY)
$$
where $\calP$ and $\calR$ are the pants decompositions for $\mu$ and 
$\nu$ respectively.

\subsection*{Distance Formula} 
In what comes below, the function $[a]_C$ is equal to $a$ if $a\geq C$ and it 
is zero otherwise. Also, we modify the $\log(a)$ function to be one for 
$a\leq e$. We can now state the distance formula:

\begin{theorem}[Theorem 6.1, \cite{rafi:CM}] \label{Thm:Distance}
There is a constant $C>0$ so that the following holds. 
For $x,y \in \calT(S)$ let $\mu_x= (\calP, \{l_\alpha\}, \{ \tau_\alpha\})$ and 
$\mu_y=  (\calR, \{k_\beta\}, \{ \sigma_\beta\})$ be the
associated short markings. 
%Let $\calP'= \calP \setminus \calR$ and $\calR'= \calR \setminus \calP$.
Then,
\begin{align} 
d_\calT(x,y)  \asymp 
 &\sum_Y \Big[ d_Y(\mu_x, \mu_y )\Big]_C + 
        \sum_{\gamma \not \in \calP \cup \calR}
        \Big[ \log d_\gamma (\mu_x, \mu_y )\Big]_C \notag \\
 & +\sum_{\alpha \in \calP \setminus \calR} \log \frac 1{l_\alpha} +
        \sum_{\beta \in \calR \setminus \calP} \log \frac 1{k_\beta} \label{Eq:Distance} \\
  & + \sum_{\gamma \in \calP \cap \calR} 
         d_\HH \Big( \big(1/l_\gamma, \twist_\gamma(x,y) \big),
         \big(1/k_\gamma, 0\big)\Big). \notag
\end{align}
Here, $d_\HH$ is the distance in the hyperbolic plane. 
\end{theorem}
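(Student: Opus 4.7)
The plan is to prove this coarse identity in both directions, leaning throughout on Kerckhoff's formula
$$
d_\calT(x,y) = \tfrac{1}{2}\log \sup_\alpha \frac{\Ext_y(\alpha)}{\Ext_x(\alpha)},
$$
together with the extremal length estimate of \thmref{Length-Formula} and the structural description of a geodesic provided by \thmref{Description}.

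\textbf{Each summand is a lower bound for $d_\calT(x,y)$.} For each of the four sums on the right, I would exhibit a test curve whose extremal-length ratio between $x$ and $y$ realizes that contribution. If $\alpha\in\calP\setminus\calR$, then $\Ext_x(\alpha)\emul l_\alpha$ is small while $\Ext_y(\alpha)$ is bounded below by the Bers constant, so Kerckhoff forces $d_\calT(x,y)\gadd \log(1/l_\alpha)$, and symmetrically for $\calR\setminus\calP$. If $\alpha\in\calP\cap\calR$, I would lift to the annular cover $S^\alpha$ and use that the pair (length, twist) is coarsely a horocyclic coordinate on a hyperbolically embedded copy of $\HH$ in $\calT(S)$ (the Teichm\"uller disk of a cylinder); the resulting comparison yields the $d_\HH$ term. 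For a subsurface $Y$ with $d_Y(\mu_x,\mu_y)\geq C$, pick a curve $\gamma\subset Y$ on which either projection lies far from the other; the length formula (\thmref{Length-Formula}) then shows that $\Ext_x(\gamma)/\Ext_y(\gamma)$ grows at the required rate. For an annular vertex $\gamma\notin\calP\cup\calR$, the logarithm appears because twisting a curve of length $\ell$ once costs Teichm\"uller distance of order $1/\log(1/\ell)$, so accumulating $d_\gamma$ twists while $\ell$ varies over a comparable range costs $\log d_\gamma$.

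\textbf{The right-hand side is an upper bound for $d_\calT(x,y)$.} For this direction I would apply \thmref{Description} to partition the Teichm\"uller geodesic $[x,y]$ into the intervals $I_Y$ on which various subsurfaces $Y$ are isolated, plus a complementary thick portion. During $I_Y$ the geodesic restricts coarsely to a Teichm\"uller geodesic in $\calT(Y)$, so by induction on the complexity of $S$ its length is controlled by the terms of the distance formula restricted to the subsurfaces of $Y$. During the thick portion, the short marking changes at bounded rate, so the length is comparable to the number of elementary moves, which is in turn dominated by the subsurface-projection sums. Summing over the (finitely many) intervals $I_Y$ and the thick pieces, with bounded transition error between adjacent pieces, yields the desired upper bound.

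\textbf{Main obstacle.} The most delicate step is matching the two regimes at a pants curve $\gamma\in\calP\cap\calR$ that is short at both $x$ and $y$. Naively one might expect a term like $\log(1/l_\gamma)+\log(1/k_\gamma)+\log\twist_\gamma(x,y)$, but the correct quantity is the hyperbolic distance between $(1/l_\gamma,\twist_\gamma(x,y))$ and $(1/k_\gamma,0)$ in $\HH$: when $\gamma$ is very short, a unit of twisting is genuinely cheap, and the three naive contributions interact via the hyperbolic metric rather than adding. Establishing that $d_\HH$ is the right model, with uniform thresholds $C$, requires carefully comparing the flat cylinder around $\gamma$ in the quadratic differential along $[x,y]$ with the Fenchel--Nielsen picture; this is exactly the synthesis carried out via the quadratic-differential machinery of \secref{Quadratic}, and it is what drives the final form of \eqnref{Distance}.
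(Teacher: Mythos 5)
First, a point of order: the paper does not prove this statement at all --- it is quoted verbatim as Theorem~6.1 of \cite{rafi:CM}, so there is no internal proof to compare against. Judged on its own terms, your proposal has a genuine gap in the lower-bound direction.

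You argue that ``each summand is a lower bound for $d_\calT(x,y)$'' by exhibiting, for each term, a test curve whose extremal-length ratio realizes that contribution via Kerckhoff's formula. But Kerckhoff's formula is a supremum over curves: producing one witness per term only shows that the \emph{maximum} of the summands is $\lmul d_\calT(x,y)$, not that their \emph{sum} is. The sums in \eqnref{Distance} range over all subsurfaces $Y$ and all annuli $\gamma$, and the number of terms surviving the cutoff $[\,\cdot\,]_C$ is not bounded in terms of the topology of $S$; it can itself be comparable to $d_\calT(x,y)$. So the inequality $\sum \lmul d_\calT$ does not follow from term-by-term estimates. What is actually needed --- and what the quadratic-differential analysis of Sections~3--5 of the present paper is designed to supply --- is an \emph{additivity} statement: each subsurface $Y$ with $d_Y(\mu_x,\mu_y)\geq C$ has a nonempty interval of isolation $I_Y\subset[x,y]$ of length $\gadd d_Y(\mu_x,\mu_y)$ (respectively $\gadd \log d_\gamma$, or the hyperbolic-plane term, in the annular cases), and these intervals overlap only boundedly, so that their lengths sum to at most a fixed multiple of the total length of the geodesic. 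Without this disjointness-of-active-intervals step your first half proves only $\max \lmul d_\calT$, which is strictly weaker than the theorem.

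The upper-bound direction of your sketch (partitioning $[x,y]$ into isolation intervals and thick pieces and inducting on complexity) is closer in spirit to how such formulas are established, and your discussion of why the $d_\HH$ term is the correct model at a curve in $\calP\cap\calR$ correctly identifies the delicate point. But as written the induction is circular in one respect: \thmref{Description} and the isolation intervals are themselves extracted from the geodesic $[x,y]$, whereas the theorem must produce the estimate from the data of $\mu_x$ and $\mu_y$ alone; you need \thmref{M-Large} (or its analogue in \cite{rafi:SC}) to convert ``large subsurface projection of the endpoint markings'' into ``nonempty isolation interval'' before either direction can close.
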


\begin{remark}
In above theorem, $C$ can be taken to be as large an needed. However,
increasing $C$ will increase the constants hidden inside $\asymp$. 
Let $\gL$ be the left hand side of \eqnref{Distance} and
$\gR$ be the right hand side. Then, a stronger version of this theorem 
can be stated as follows: There is $C_0>0$, depending only on the
topology of $S$, and for every $C\geq C_0$ there are constants $A$ and $B$ 
so that 
$$
\frac  \gL A -B \leq \gR \leq A \, \gL + B. 
$$
\end{remark}

As a corollary, we have the following criterion for showing two
points in \Teich space are a bounded distance apart. 
Let $\ep_0>\ep_1>0$, let $\calA_x$ be a set of curves in $x$ that have 
extremal length less than $\ep_0$ and assume that every other curve in 
$x$ has a length larger than $\ep_1$. Let $\ep'_0, \ep'_1$ and $\calA_y$
be similarly defined for $y$. 

\begin{corollary} \label{Cor:Bounded-Distance} 
Assume, for $x,y \in \calT(S)$, that
\begin{enumerate}
\item $\calA_x = \calA_y$ 
\item For any subsurface $Y$ that is not an annulus with core curve in $\calA_x$,  
$d_Y(\mu_x, \mu_y)=O(1)$.
\item For $\alpha \in \calA_x$, $\ell_x(\alpha) \emul \ell_y(\alpha)$. 
\item For $\alpha \in \calA_x$, 
$\displaystyle \twist_\alpha(x,y) = O\left( 1/ {\Ext_x(\alpha)}\right)$.
\end{enumerate}
Then, $d_\calT(x,y)=O(1)$. 
\end{corollary}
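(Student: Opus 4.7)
The plan is to apply the distance formula \thmref{Distance} and show that each of the five groups of summands on the right-hand side of \eqnref{Distance} contributes only $O(1)$.

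The first step is combinatorial: I claim $\calA_x \subseteq \calP \cap \calR$. The curves in $\calA_x$ are pairwise disjoint (small extremal length forces short hyperbolic representatives whose collars are disjoint), so they form a multicurve, and the greedy algorithm producing $\mu_x$ selects them before any moderate-length curve; this gives $\calA_x \subseteq \calP$, and since $\calA_y = \calA_x$ by (1), the same argument yields $\calA_x \subseteq \calR$. Consequently, every curve in $\calP \setminus \calR$ or $\calR \setminus \calP$ is moderate, meaning its extremal length is bounded below (by $\ep_1$) and above (by a Bers-type constant). Hence each term of the form $\log(1/l_\alpha)$ or $\log(1/k_\beta)$ is $O(1)$, and since the number of such curves is bounded by the topology of $S$, these two sums contribute $O(1)$.

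Next I handle the two subsurface-projection sums. For a non-annular $Y$, hypothesis (2) gives $d_Y(\mu_x,\mu_y) = O(1)$, so $[d_Y(\mu_x,\mu_y)]_C = 0$ for $C$ chosen large enough. For $\gamma \notin \calP \cup \calR$, the curve $\gamma$ cannot lie in $\calA_x$ (since $\calA_x \subseteq \calP$), so (2) applies to the annulus with core $\gamma$, forcing $d_\gamma(\mu_x,\mu_y) = O(1)$ and hence $[\log d_\gamma(\mu_x,\mu_y)]_C = 0$. Both sums vanish for $C$ large.

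Finally, for the $\HH$-distance sum over $\gamma \in \calP \cap \calR$, the number of summands is topologically bounded, so it suffices to bound each. When $\gamma \in \calA_x$, condition (3) gives $1/l_\gamma \emul 1/k_\gamma$ and condition (4) gives $|\twist_\gamma(x,y)| \lmul 1/l_\gamma$; the two points $(1/l_\gamma,\twist_\gamma(x,y))$ and $(1/k_\gamma,0)$ in the upper half-plane then have comparable heights and horizontal separation at most a uniform multiple of their common height, so the standard $\HH$-distance formula bounds $d_\HH$ by $O(1)$. When $\gamma \in (\calP \cap \calR) \setminus \calA_x$ is moderate, both $1/l_\gamma$ and $1/k_\gamma$ are bounded; since $\gamma \notin \calA_x$, condition (2) gives $d_\gamma(\mu_x,\mu_y) = O(1)$, which in particular forces $|\twist_\gamma(x,y)| = O(1)$, and both points lie in a fixed compact region of $\HH$. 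Summing the bounded contributions from all five groups completes the proof; the only genuinely non-formal step is the opening observation $\calA_x \subseteq \calP \cap \calR$, after which everything reduces to a routine application of \thmref{Distance} with the topological bound on the number of relevant curves and subsurfaces.
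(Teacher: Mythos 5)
Your proof is correct and follows essentially the same route as the paper: bound each group of terms in the distance formula of \thmref{Distance}, using (2) to kill the subsurface-projection sums, $\calA_x=\calA_y$ to bound the $\log(1/l_\alpha)$ sums, and (3)--(4) together with the standard upper-half-plane estimate to bound the $d_\HH$ terms. You simply fill in a few details the paper leaves implicit (that $\calA_x\subseteq\calP\cap\calR$, and the moderate-length case $\gamma\in(\calP\cap\calR)\setminus\calA_x$), all correctly.
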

\begin{proof}
Condition $(2)$ implies that the first two terms in Equation \eqref{Eq:Distance}
are zero. Since $\calA_x=\calA_y$, curves in $\calP\setminus \calR$ and 
$\calR \setminus \calP$ have lengths that are bounded below. Hence the third 
and the forth terms of \eqnref{Distance} are uniformly bounded. The conditions 
on the lengths and twisting of curves in $\calA_x$ imply that the last term is 
uniformly bounded; for points $p, q \in \HH$, $p=(p_1,p_2)$, $q=(q_1,q_2)$, if 
\begin{equation*}
 (p_1 -q_1) \emul p_2 \emul q_2
 \qquad\text{then}\qquad d_\HH(p,q) =O(1). \qedhere
 \end{equation*}
\end{proof}

\section{Geometry of quadratic differentials}
\label{Sec:Quadratic}

A geodesic in \Teich space is the image of a quadratic differential
under the \Teich geodesic flow. Quadratic differentials are naturally
equipped with a singular Euclidean structure. We, however, often
need to compute the extremal length of a curve. In this section, 
we review how the extremal length of a curve can be computed from 
the information provided by the flat structure and how the flat length 
and the twisting information around a curve change along a \Teich geodesic. 

\subsection*{Quadratic differentials}
Let $\calT(S)$ be the \Teich space of $S$ and $\calQ(S)$ be the space of unit 
area quadratic differentials on $S$. Recall that a quadratic differential $q$ on 
a Riemann surface $x$ can locally be represented as 
$$
q=q(z) \, dz^2, 
$$
where $q(z)$ is a meromorphic function on $x$ with all poles having 
a degree of at most one. All poles are required to occur at the punctures. 
In fact, away from zeros and poles, there is a change of coordinates
so that $q=dz^2$. Here $|q|$ locally defines a Euclidean metric
on $x$ and the expressions $\Im(\sqrt{q})=0$ and  $\Re(\sqrt{q})=0$ define the
horizontal and the vertical directions. Vertical trajectories foliate the surface
except at the zeros and the poles. This foliation equipped with the transverse
measure $|dx|$ is called the vertical foliation and is denoted by $\lambda_-$. 
The horizontal foliation is similarly defined and is denoted by $\lambda_+$.  

A neighborhood of a zero of order $k$ has the structure of the Euclidean cone 
with total angle $(k+2)\pi$ and a neighborhood of a degree one pole has the 
structure of the Euclidean cone with total angle $\pi$.  In fact, this locally Euclidean 
structure and this choice of the vertical foliation completely determines $q$. 
We refer to this metric as the $q$--metric on $S$. 

%In the cace where $S$ is an annulus, we make a special definition for 
%$\calQ(S)$. In this case, $\calQ(S)$ is the space of all marked flat annuli
%with a choice of a vertical direction (restriction of a quadratic differential 
%to a flat annulus). This space can be identified with the upper half place
%$\HH^2$ as follows:

\subsection*{Size of a subsurface}
For every curve $\alpha$, the geodesic representatives of $\alpha$
in the $q$--metric form a (possibly degenerate) flat cylinder $F_q(\alpha)$. 
For any proper subsurface $Y\subset S$, let $\sY=\sY_q$ be the representative 
of the homotopy class of $Y$ that has $q$--geodesic boundaries and that is 
disjoint from the interior of $F_q(\alpha)$ for every curve $\alpha \subset \bdy Y$. 
When the subsurface is an annulus with core curve $\alpha$ we think
of $\sF=F_q(\alpha)$ as its representative with geodesic boundary. 
Define $\size_q(Y)$ to be the $q$--length of the shortest essential curve in $Y$
and for a curve $\alpha$ let $\size_q(\sF)$ be the $q$--distance between 
the boundary components of $\sF$. When $Y$ is a pair of pants,
$\size_q(Y)$ is defined to be the diameter of $\sY$. 

\subsection*{An estimate for lengths of curves}
For every curve $\alpha$ in $S$, denote the extremal length of $\alpha$
in $x\in \calT(S)$ by $\Ext_x(\alpha)$. For constants $\ep_0>\ep_1>0$, the
$(\ep_0, \ep_1)$--thick-thin decomposition of $x$ is the pair $(\calA, \calY)$, 
where $\calA$ is the set of curves $\alpha$ in $x$ so that 
$\Ext_x(\alpha) \leq \ep_0$ and $\calY$ is the set of homotopy class of 
the components of $x$ cut along $\calA$. We further assume that 
the extremal length of any essential curve $\gamma$ that is disjoint from 
$\calA$ is larger than $\ep_1$. 

Consider the quadratic differential $(x,q)$ and the thick-thin
decomposition $(\calA, \calY)$ of $x$. Let $\alpha \in \calA$
be the common boundary of subsurfaces $Y$ and $Z$ in $\calY$. 

Let $\alpha^*$ be the geodesic representative of $\alpha$ in the boundary 
of $\sY$ and let $\sE=E_q(\alpha, Y)$ be the largest regular neighborhood of 
$\alpha^*$ in the direction of $\sY$ that is still an embedded annulus.
We call this annulus the expanding annulus with core curve $\alpha$
in the direction of $Y$. Define $M_q(\alpha, Y)$ to be $\Mod_x(\sE)$, 
where $\Mod_x(\param)$ is the modulus of an annulus in $x$. 
Recall from \cite[Lemma 3.6]{rafi:SC} that
$$
\Mod_x (\sE) \emul \log \frac{\size_q(Y)}{\ell_q(\alpha)}
\quad\text{and}\quad
\Mod_x (\sF) = \frac{\size_q(\sF)}{\ell_q(\alpha)}.
$$
Let $\sG=E_q(\alpha,Z)$ and $M_q(\alpha, Z)$ be defined 
similarly. 

The following statement relates the information about the flat lengths of 
curves to their extremal length. For a more general statement see 
\cite[Lemma 3 and Theorem 7]{rafi:LQC}.

\begin{theorem} \label{Thm:Length}
Let $(x,q)$ be a quadratic differential and let $(\calY, \calA)$ be the thick-thin 
decomposition of $x$. Then 
\begin{enumerate}
\item For $Y \in \calY$ and a curve $\gamma$ in $Y$
$$
\Ext_x(\gamma) \emul \frac{\ell_q(\gamma)^2}{\size(Y)^2}.
$$
\item For $\alpha \in \calA$ that is the common boundary of $Y, Z \in\calY$, 
\begin{align*}
\frac 1{\Ext_x(\alpha)} &\emul \log \frac{\size_q(Y)}{\ell_q(\alpha)} +
\frac{\size_q(F_q(\alpha))}{\ell_q(\alpha)}  +\log \frac{\size_q(Z)}{\ell_q(\alpha)} \\
   & \emul \Mod_x (\sE) + \Mod_x (\sF) + \Mod_x (\sG).
\end{align*}
\end{enumerate}
\end{theorem}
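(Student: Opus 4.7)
The plan is to reduce both parts to the classical identification of extremal length with the reciprocal of the modulus of an embedded annulus, combined with the bounded-geometry description of thick subsurfaces in the $q$-metric.

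For part~(1), I would rescale the $q$-metric so that $\size_q(Y)=1$. Because $Y$ lies in the thick part, $\sY$ then has bounded diameter and area $\emul 1$. The lower bound $\Ext_x(\gamma)\gmul \ell_q(\gamma)^2/\size_q(Y)^2$ follows by testing the supremum in the definition of extremal length with the conformal density $\rho=|q|^{1/2}$ cut off outside $\sY$: one has $\ell_\rho(\gamma)=\ell_q(\gamma)$ and $\area(\rho)\lmul\size_q(Y)^2$. For the upper bound, I would exhibit an embedded annular neighborhood $A$ in $\sY$ homotopic to $\gamma$ with $\Mod_x(A)\gmul\size_q(Y)^2/\ell_q(\gamma)^2$; the bounded geometry of $\sY$ at scale $\size_q(Y)$ guarantees such a wide collar around the $q$-geodesic representative of $\gamma$, and then $\Ext_x(\gamma)\leq 1/\Mod_x(A)$ closes the estimate.

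For part~(2), I would use the identity $1/\Ext_x(\alpha)=\sup_A\Mod_x(A)$ where $A$ ranges over embedded annuli in $x$ with core curve $\alpha$. The three annuli $\sE$, $\sF$, $\sG$ have pairwise disjoint interiors and are each homotopic to $\alpha$, so Gr\"otzsch's sum inequality for nested annuli yields $\Mod_x(\sE)+\Mod_x(\sF)+\Mod_x(\sG)\leq 1/\Ext_x(\alpha)$, proving one direction. The second $\emul$ line in the theorem is immediate from the explicit formulas recalled in the excerpt, $\Mod_x(\sF)=\size_q(\sF)/\ell_q(\alpha)$ and $\Mod_x(\sE)\emul\log(\size_q(Y)/\ell_q(\alpha))$ (and similarly for $\sG$). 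For the reverse inequality, I would take an annulus $A^\ast$ whose modulus is close to $1/\Ext_x(\alpha)$, slice it into pieces lying inside $\sE\cup\sF\cup\sG$ and a complementary piece contained in $\sY\cup\sZ$ beyond the expanding annuli; the maximality property defining $\sE$ and $\sG$ together with part~(1) applied to $Y$ and $Z$ forces the complementary piece to have modulus $O(1)$. Since $\alpha\in\calA$ is short, $1/\Ext_x(\alpha)$ is large, so this $O(1)$ defect is absorbed into the comparability constants.

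The principal obstacle is the reverse inequality in part~(2): an arbitrary near-maximal annulus around $\alpha$ need not respect the decomposition of the thin neighborhood of $\alpha$ into $\sE$, $\sF$, and $\sG$. One must use the maximality built into the definition of the expanding annulus (so that no wider regular neighborhood of $\alpha^\ast$ embeds in $\sY$) to prevent annular material from escaping $\sE$, and combine this with the thick-part estimate from part~(1) to bound any residual modulus contribution from $\sY$ or $\sZ$. Once this decomposition is made rigorous, the two estimates assemble into the stated $\emul$ relations.
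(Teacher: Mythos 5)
The paper does not actually prove this statement; it is imported wholesale from \cite[Lemma 3 and Theorem 7]{rafi:LQC}, so your proposal is a reconstruction rather than a parallel of anything in the text. The routine halves of your sketch are correct and standard: the length--area lower bound for $\Ext_x(\gamma)$ in (1), the serial (Gr\"otzsch) inequality giving $\Mod_x(\sE)+\Mod_x(\sF)+\Mod_x(\sG)\leq 1/\Ext_x(\alpha)$ in (2), and reading off the second $\emul$ line of (2) from the modulus formulas already quoted in \secref{Quadratic}.

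The genuine gap is your reverse inequality in (2). The modulus of an annulus is not subadditive over an arbitrary decomposition into pieces: the sets $A^*\cap\sE$, $A^*\cap\sF$, $A^*\cap\sG$ need not be annuli, and no inequality of the form $\Mod_x(A^*)\lmul\Mod_x(\sE)+\Mod_x(\sF)+\Mod_x(\sG)+O(1)$ can be extracted from ``slicing''; the serial rule you invoke runs in the opposite direction. The correct tool is the length--area method: build an explicit conformal metric $\rho$ equal to $|q|^{1/2}/\ell_q(\alpha)$ on $\sF$, to $|q|^{1/2}$ divided by the distance to $\alpha^*$ on $\sE$ and $\sG$, and to $|q|^{1/2}/\size_q(\param)$ on the thick pieces, so that every representative of $\alpha$ has $\rho$--length $\gmul 1$ while $\area(\rho)\lmul\Mod_x(\sE)+\Mod_x(\sF)+\Mod_x(\sG)$; this yields $\Ext_x(\alpha)\gmul 1/\big(\Mod_x(\sE)+\Mod_x(\sF)+\Mod_x(\sG)\big)$ directly. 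Two smaller gaps of the same flavor sit in (1): with the density cut off outside $\sY$, the identity $\ell_\rho(\gamma)=\ell_q(\gamma)$ is not automatic, since a competitor for $\gamma$ may exit $\sY$ through the thin annuli and carry most of its length where $\rho=0$, so one must show each essential arc of such a competitor in $\sY$ has $q$--length $\gmul\size_q(Y)$; and for the upper bound when $\ell_q(\gamma)\gg\size_q(Y)$, the existence of an embedded collar of modulus $\gmul\size_q(Y)^2/\ell_q(\gamma)^2$ around a geodesic that may return close to itself and to singularities is exactly the nontrivial content of the cited lemmas and cannot be waved through as ``bounded geometry.''
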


\subsection*{Length and twisting along a \Teich geodesic} \label{Sec:twist}
A matrix  $A \in \SL(2,\RR)$ acts on any $q \in \calQ(S)$
locally by affine transformations. The total angle at a point
does not change under this transformation. Thus the resulting singular
Euclidean structure defines a quadratic differential that we denote by $Aq$. 
The \Teich geodesic flow, $g_t \from \calQ \to \calQ$, is the action
by the diagonal subgroup of $\SL(2,\RR)$:
$$
g_t(q)= \begin{bmatrix} e^t & 0 \\ 0&e^{-t}\end{bmatrix} q.
$$
The \Teich geodesic described by $q$ is then a map
$$
\calG \from \RR \to \calQ, \qquad \calG(t)= (x_t, q_t)
$$
where $q_t=g_t(q)$ and $x_t$ is the underlying Riemann surface
for $q_t$. 

The flat length of a curve along a \Teich geodesic is well behaved.
Let the horizontal length $h_t(\alpha)$ of $\alpha$ in $q$ be the transverse 
measure of $\alpha$ with respect to the vertical foliation of $q_t$ and 
the vertical length $v_t(\alpha)$ of $\alpha$ be the transverse
measure with respect to the horizontal foliation of $q_t$. We have
(see the discussion on \cite[Page 186]{rafi:SC})
$$
\ell_{q_t}(\alpha) \emul h_t(\alpha) + v_t(\alpha).
$$
Since the vertical length decreases exponentially fast and the
horizontal length increases exponentially fast, for every
curve $\alpha$, there are constants $L_\alpha$ and $t_\alpha$ so that 
\begin{equation} \label{Eq:Flat-Length}
\ell_{q_t}(\alpha) \emul L_\alpha \cosh (t -t _\alpha). 
\end{equation}
We call the time $t_\alpha$ the balanced time for $\alpha$ and the length 
$L_\alpha$ the minimum flat length for $\alpha$. 

We define the twisting parameter of a curve along a \Teich geodesic
to be the relative twisting of $q_t$ with respect to the vertical foliation. 
That is, for any curve $\alpha$ and time $t$, let $\tau_t$ be the
arc in the annular cover of $q_t^\alpha$ that is perpendicular
to $\alpha$ and let $\lambda_-$ be the vertical foliation of $q_t$
(which is topologically the same foliation for every value of $t$). 
Define
$$
\twist_t(\alpha) = \twist_\alpha(\tau_t, \lambda_-).
$$
This is an increasing function that ranges from a minimum of zero to
a maximum of $T_\alpha=d_\alpha(\lambda_-, \lambda_+)$. That is, 
$\tau_t$ looks like $\lambda_-$ at the beginning and like $\lambda_+$
in the end. In fact, from \cite[Equation 16]{rafi:CM} we have
the following explicit formula:
\begin{equation} \label{Eq:Twist}
\twist_t(\alpha) \eadd 
  \frac{2\,T_\alpha \, e^{2(t-t_\alpha)}}{\cosh^2(t-t_\alpha)}.
\end{equation}
Also,  \cite[Proposition 5.8]{rafi:LT} gives the following estimate on the modulus
of $\sF_t=F_{q_t}(\alpha)$:
\begin{equation} \label{Eq:Modulus}
\Mod_{q_t}(\sF_t) \emul \frac{T_\alpha}{\cosh^2(t-t_\alpha)}.
\end{equation}
That is, the modulus of $\sF_t$ is maximum when $\alpha$ is balanced and 
goes to zero as $t$ goes to $\pm \infty$. The maximum modulus of $\sF_t$ is 
determined purely by the topological information $T_\alpha$, which is the relative 
twisting of $\lambda_-$ and $\lambda_+$ around $\alpha$. The size of $\sF_t$  
at $q_t$ is equal to its modulus times the flat length of $\alpha$ at $q_t$. Hence,
\begin{equation} \label{Eq:Size}
\size_{q_t}(\sF_t) =\frac {T_\alpha L_\alpha}{\cosh(t-t_\alpha)}
\end{equation}

\section{Projection of a quadratic differential to a subsurface}
\label{Sec:Restriction}

In this section, we introduce the notion of an isolated surface in a quadratic
differential. Let $(x,q)$ be a quadratic differential, $Y \subset S$ 
be a proper subsurface and $\sY$ be the representative of $Y$
with $q$--geodesic boundaries. Note that, when $\sY$ is non-degenerate, 
it is itself a Riemann surface that inherits its conformal structure from
$x$. In this case, for a curve $\gamma$ in $Y$, we use the
expression $\Ext_\sY(\gamma)$ to denote the extremal length of
$\gamma$ in the Riemann surface $\sY$. The following lemma which is a 
consequence of (\cite[Lemma~4.2]{minsky:PR}).

\begin{lemma}[Minsky] 
\label{Lem:Length-Comparison}
There exists a constant $m_0$ depending only on the topological type of $S$
so that, for every subsurface $Y$ with negative Euler characteristic the 
following holds. If $M_q(\alpha, Y)\geq m_0$ for every boundary component 
$\alpha$ of $Y$ then for any essential curve $\gamma$ in $Y$
$$
\Ext_{\sY}(\gamma) \emul \Ext_x(\gamma).
$$
\end{lemma}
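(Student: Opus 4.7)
The plan is to prove the two inequalities $\Ext_{\sY}(\gamma) \gmul \Ext_x(\gamma)$ and $\Ext_{\sY}(\gamma) \lmul \Ext_x(\gamma)$ separately, with only the second requiring the hypothesis on the moduli $M_q(\alpha, Y)$.

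For the first inequality I would argue directly from the definition. Since $\sY$ inherits its conformal structure from $x$, the inclusion $\sY \hookrightarrow x$ is conformal. For any metric $\sigma$ in the conformal class of $x$, its restriction $\sigma|_{\sY}$ has area no larger than $\area(\sigma)$, while the $\gamma$-length computed on $\sY$ is the infimum over representatives that must lie in $\sY$---a smaller family than on $x$---so it is at least $\ell_\sigma(\gamma)$. Hence the ratio $\ell^2/\area$ does not decrease under restriction, and taking the sup over $\sigma$ yields $\Ext_x(\gamma) \leq \Ext_{\sY}(\gamma)$.

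The hard direction $\Ext_{\sY}(\gamma) \lmul \Ext_x(\gamma)$ is where the modulus hypothesis enters. I would take a near-extremal metric $\rho$ on $\sY$ for $\gamma$ and extend it to a competing metric $\tilde\rho$ on $x$ by placing a background metric of bounded area (for example, a rescaled $q$-metric) on $x \setminus \sY$. The area is automatically comparable, so what I need is a lower bound $\ell_{\tilde\rho}(\gamma, x) \gmul \ell_\rho(\gamma, \sY)$. Any ``shortcut'' representative of $\gamma$ in $x$ must exit $\sY$ across some boundary $\alpha$ and re-enter, and every such excursion traverses the expanding annulus $\sE = E_q(\alpha, Y) \subset \sY$ twice. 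Because $M_q(\alpha, Y) \geq m_0$ is large, a Gr\"otzsch-type extremal length bound on the crossing family of $\sE$ shows that each crossing contributes a quantitatively large amount to the $\rho$-length. Equivalently, an averaging and rearrangement step lets me replace each excursion by a homotopic arc confined to $\sY$ without decreasing $\tilde\rho$-length. This reduces the length infimum in $x$ to one in $\sY$ and gives the desired upper bound on $\Ext_{\sY}(\gamma)$.

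The main obstacle is making the shortcut-prevention quantitative: converting $\Mod(\sE) \geq m_0$ into a uniform lower bound on the $\rho$-length of homotopy classes of arcs crossing $\sE$, with constants depending only on $m_0$. A cleaner alternative---likely closer to Minsky's original approach in \cite{minsky:PR}---is to bypass explicit metric surgery and argue purely conformally: cap off each boundary component of $\sY$ by a half-infinite cylinder to form a compact Riemann surface $\hat{\sY}$; observe that $\Ext_{\hat{\sY}}(\gamma) \emul \Ext_{\sY}(\gamma)$ because the caps are attached along annuli of definite modulus; and then use the hypothesis $M_q(\alpha, Y) \geq m_0$ to compare $\hat{\sY}$ with $x$ and conclude $\Ext_{\hat{\sY}}(\gamma) \emul \Ext_x(\gamma)$. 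Either route places the real work on the same ingredient: extremal-length estimates across annuli of definite modulus, together with the fact (from \thmref{Length}) that large $M_q(\alpha,Y)$ forces $\alpha$ to be short in extremal length and thus conformally ``insulates'' $\sY$ from its complement.
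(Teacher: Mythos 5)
The paper does not actually prove this lemma; it is quoted as a consequence of \cite[Lemma~4.2]{minsky:PR}, so there is no in-house argument to match yours against. Judged on its own, your easy direction is correct and complete: $\Ext_x(\gamma)\leq\Ext_{\sY}(\gamma)$ is exactly conformal monotonicity of extremal length (restricting any competing metric $\sigma$ on $x$ to $\sY$ decreases area and increases the infimal length of $\gamma$), and indeed needs no hypothesis on the moduli.

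The hard direction, however, has a genuine gap, and it sits precisely where you located it. The claim that $\Mod(\sE)\geq m_0$ forces each crossing of $\sE$ to be long in the near-extremal metric $\rho$ for $\gamma$ runs the extremal-length inequality backwards. Since $\Ext(\Gamma_{\mathrm{cross}})=\Mod(\sE)$ is a \emph{supremum} over metrics, for the particular metric $\rho$ one only gets the upper bound $\inf_{\tau\in\Gamma_{\mathrm{cross}}}\ell_\rho(\tau)^2\leq\Mod(\sE)\cdot\area_\rho(\sE)$; there is no lower bound on crossing lengths in a metric that was chosen to be extremal for a different family. Concretely, the Strebel metric for $\gamma$ may assign tiny length to arcs crossing $\sE$, so excursions out of $\sY$ can be cheap in $\tilde\rho$ and the shortcut-prevention step fails. (The proposed rearrangement fix also has the inequality reversed --- you need the pushed-in representative to be no \emph{longer} than the original --- and the replacement arc must run along $\alpha^*$, whose $\rho$-length you do not control.) Your second route, capping off $\sY$ and then ``using the hypothesis to compare $\hat{\sY}$ with $x$,'' simply restates the lemma at its final step. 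The mechanism that actually works in \cite{minsky:PR} is different in kind: large expanding annuli force $\Ext_x(\alpha)$ to be small (cf.\ \thmref{Length}), and one compares extremal lengths via the modulus characterization $1/\Ext(\gamma)=\sup\{\Mod(A)\}$ together with serial/parallel laws for moduli of nested annuli, rather than by surgering an extremal metric for $\gamma$.
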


Fixing $m_0$ as above, we say $Y$ is \emph{isolated} 
in $q$ if, for every boundary component $\alpha$ of $Y$, 
$M_q(\alpha,Y) \geq m_0$. The large expanding annuli in the boundaries
of $Y$ isolate it in the sense that one does not need any information about 
the rest of the surface to compute extremal lengths of curves in $Y$. 
As we shall see, when $Y$ is isolated, the restrictions of the hyperbolic
metric of $x$ to $Y$ and the quadratic differential $q$ to $Y$ are at most
a bounded distance apart in the \Teich space of $Y$.

For $x \in \calT(S)$ and $Y \subset S$ we define the Fenchel-Nielsen 
projection of $x$ to $Y$, a complete hyperbolic metric $x \rY$ on $Y$, 
as follows:  Extend the boundary curves of $Y$ to a pants decomposition 
$\calP$ of $S$.  Then the Fenchel-Nielsen coordinates of $\calP \rY$ 
defines a point $x \rY$ of $\calT(Y)$ (see \cite{minsky:PR} for
a detailed discussion).

Now, we construct a projection map from $q$ to $q\rY$ by considering
the representative with geodesic boundary $\sY$ and capping
off the boundaries with punctured disks. It turns out that the underlying
conformal structure of $q \rY$ and $x \rY$ are not very different, but
the quadratic differential restriction commutes with the action of
$\SL(2,\RR)$. When $Y$ is not isolated in $q$, the capping off process is 
not geometrically meaningful (or sometimes not possible). Hence, the process 
is restricted to the appropriate subset of $\calQ$. 

\begin{theorem} \label{Thm:Restriction}
Let $Y$ be a subsurface of $S$ that is not an annulus and let $\calQ_Y(S)$ 
be the set of quadratic differentials $q$ so that $Y$ is isolated in $q$. 
There is a map $\pi_Y \from \calQ_Y(S) \to \calQ(Y)$, with 
$\pi_Y(q) = q \rY$, so that 
\begin{equation} \label{Eq:Y}
d_{\calT(Y)}(q \rY, x \rY)=O(1).
\end{equation}
Furthermore, if, for $A \in \SL(2,\RR)$, both $q$ and $Aq$ are in $\calQ_Y (S)$ 
then
\begin{equation} \label{Eq:A}
d_{\calT(Y)} \big( (Aq) \rY, A (q \rY) \big)=O(1).
\end{equation}
\end{theorem}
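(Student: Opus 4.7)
The plan is to first construct the projection map $\pi_Y$ from the flat geometry of $q$, and then to prove the two estimates by separate arguments: \eqnref{Y} via a bounded-distance criterion applied to the Riemann surfaces underlying $q\rY$ and $x\rY$, and \eqnref{A} via an equivariance argument in which the non-equivariance of the capping procedure contributes only bounded error.

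To construct $\pi_Y$, given $q \in \calQ_Y(S)$, I would first pass to $\sY = \sY_q$, the representative of $Y$ with $q$--geodesic boundary. By the isolation hypothesis, each boundary component $\alpha$ is the inner edge of an expanding annulus of modulus at least $m_0$, so $\sY$ carries a well-defined flat structure with piecewise geodesic boundary. I would then cap off each boundary component by a flat disk with a simple pole at the center and circumference $\ell_q(\alpha)$, and rescale to unit area. This defines $\pi_Y(q) = q\rY \in \calQ(Y)$.

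For \eqnref{Y}, I would verify the hypotheses of the $\calT(Y)$--analogue of \corref{Bounded-Distance} for the pair $(q\rY, x\rY)$. The length comparison \lemref{Length-Comparison} gives $\Ext_{\sY}(\gamma) \emul \Ext_x(\gamma)$ for every curve $\gamma$ in $Y$, and since the caps contribute only a bounded additive amount to the modulus of any annular neighborhood, $\Ext_{q\rY}(\gamma) \emul \Ext_{\sY}(\gamma)$. On the other hand, the Fenchel-Nielsen definition of $x\rY$ together with Minsky's extremal length estimates yields $\Ext_{x\rY}(\gamma) \emul \Ext_x(\gamma)$. Hence the two metrics share the same short curves with comparable extremal lengths; subsurface projections of short markings into $\AC(Y)$ are purely topological and therefore agree; and for each short curve $\alpha$ of $Y$ the relative twist between $q\rY$ and $x\rY$ is controlled by the comparability of the $q$-- and hyperbolic structures on $\sY$, yielding the bound $O(1/\Ext_x(\alpha))$ required by the criterion.

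For \eqnref{A}, the key observation is that the interior flat surface $\sY$ is $\SL(2,\RR)$--equivariant: since $A$ acts on $q$ by affine transformations in flat charts, $q$--geodesics are carried to $Aq$--geodesics, so $\sY_{Aq}$ is canonically identified with $A \cdot \sY_q$ as a flat surface with piecewise geodesic boundary. Thus $(Aq)\rY$ and $A(q\rY)$ agree outside the caps and differ only in which cap is attached. I would choose the cap model so that its modulus remains bounded both before and after applying $A$, using the hypothesis that $Y$ is isolated in $Aq$ to ensure that $\ell_{Aq}(\alpha)$ is compatible with the scale of the $A$--image of the $q$--cap. The identity map between the two Riemann surfaces is then conformal on $A \cdot \sY_q$ and quasi-conformal with uniformly bounded dilation on the cap regions, giving the estimate. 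The hard part will be \eqnref{A}: the capping procedure is not literally $\SL(2,\RR)$--equivariant, and rigorously bounding the distortion requires a careful choice of cap model and essential use of the joint isolation hypothesis $q, Aq \in \calQ_Y(S)$ to prevent the boundary geometry from degenerating along the $A$--orbit segment.
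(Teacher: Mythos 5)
The construction of the cap is where the real content of this theorem lives, and your version of it has a genuine gap. The boundary $\alpha^*$ of $\sY$ is a $q$--geodesic, hence a concatenation of saddle connections meeting at singular points $a_1,\dots,a_n$ whose interior angle on the $\sE$ side is $\theta_i>\pi$. If you glue in a ``flat disk with a simple pole at the center'' (a round Euclidean cone of angle $\pi$) along this boundary, the cone angle created at each $a_i$ is $\theta_i$ plus whatever the cap contributes there, and nothing forces this to be a multiple of $\pi$. A singular flat metric whose cone angles are not multiples of $\pi$ does not underlie a quadratic differential (there is no consistent vertical direction), so your $\pi_Y(q)$ does not land in $\calQ(Y)$, which the statement requires and which the rest of the paper uses (the $\SL(2,\RR)$--action on $q\rY$ must make sense). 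The paper's construction is engineered exactly to fix this: it passes to the double cover of $\sE$, attaches $2(n-1)$ Euclidean triangles whose angles are chosen inductively so that every cone angle of the cap is a multiple of $\pi$ and at least $2\pi$, and closes with an angle count at the last two vertices. The second, related gap is \eqnref{A}. The paper's cap is built from Euclidean triangles determined by the singular points, so the construction is \emph{exactly} equivariant for corresponding labelings, $A(q\rY)=(Aq)\rY$, and the only error is the labeling ambiguity, each choice of which lands within $O(1)$ of $x\rY$ by \eqnref{Y}. Your round cap is not equivariant, and the repair you sketch --- that the identity is quasiconformal with ``uniformly bounded dilation on the cap regions'' --- fails as stated: the $A$--image of a round cone is an elliptical cone, and the natural map back to a round cone has dilatation on the order of $\|A\|^2$, which is unbounded. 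You correctly flag this as the hard part, but you do not supply the argument, and that argument is the theorem.

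If you want \eqnref{A} without an equivariant cap, there is a route consistent with your instinct that the caps are negligible: both $(Aq)\rY$ and $A(q\rY)$ contain the same flat surface $\sY_{Aq}=A\sY_q$ together with the same expanding annuli of modulus at least $m_0$ (this is precisely where the joint hypothesis $q,Aq\in\calQ_Y(S)$ enters), so \lemref{Length-Comparison} applied to each ambient surface gives $\Ext_{(Aq)\rY}(\gamma)\emul\Ext_{A\sY_q}(\gamma)\emul\Ext_{A(q\rY)}(\gamma)$ for every curve $\gamma$ in $Y$, and Kerckhoff's formula converts comparable extremal lengths into bounded Teichm\"uller distance. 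The same remark streamlines your proof of \eqnref{Y}: once you know $\Ext_{q\rY}(\gamma)\emul\Ext_x(\gamma)\emul\Ext_{x\rY}(\gamma)$ for all $\gamma$ --- which is exactly how the paper argues, via two applications of \lemref{Length-Comparison} and Minsky's product-region estimate --- Kerckhoff gives the distance bound directly, so there is no need to route through a marking criterion or to control twisting separately.
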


\begin{proof}
We first define the map $\pi_Y$. Let $(x,q)$ be a quadratic differential 
with $Y$ isolated in $q$. Let $\sY$ be the representative of $Y$ with
$q$--geodesic boundaries. Our plan, nearly identical to that 
of~\cite{rafi:TT}, is to fill all components of $\bdy \sY$ with locally flat 
once-punctured disks. 

Fix $\alpha \subset \bdy Y$ and recall that $\sE = E_q(\alpha, Y)$
is an embedded annulus and $\alpha^*$ is a boundary of $\sE$. 
Let $a_1, \ldots, a_n$ be the points on $\alpha^*$ 
which have angle $\theta_i > \pi$ in $\sE$. Note that this set is nonempty: 
if it is empty then $\sE$ meets the interior of the flat cylinder $F(\alpha)$, 
a contradiction.  Let $\sE'$ be the double cover of $\sE$ and let 
$\alpha'$ be the pre-image of $\alpha^*$.  Let $q'$ be the lift of 
$q \stroke{\sE}$ to 
$\sE'$.  Along $\alpha'$ we attach a locally flat disk $\sD'$ 
with a well defined notion of a vertical direction, as follows.

Label the lifts of $a_i$ to $\sE'$ by $b_i$ and $c_i$. We will fill 
$\alpha'$ by symmetrically adding $2\,(n-1)$ Euclidean triangles 
to obtain a flat disk $\sD'$ such that the total angle at each $b_i$ 
and $c_i$ is a multiple of $\pi$ and is at least $2\pi$. 

\begin{figure}[ht]
\setlength{\unitlength}{0.01\linewidth}
\begin{picture}(50,50)
\put(0,0){\includegraphics[width=65\unitlength]{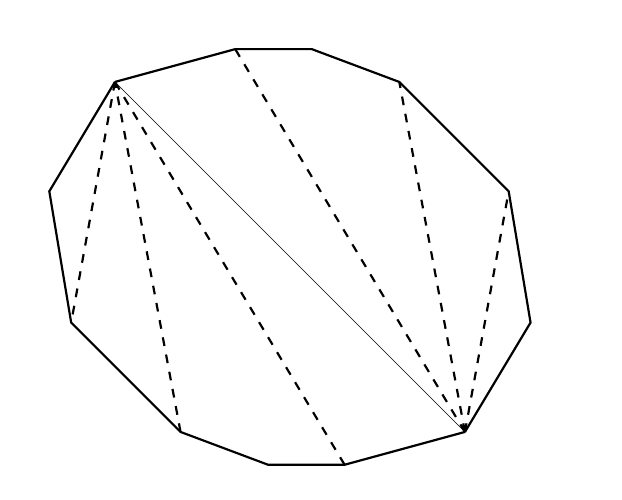}}
   \put(8,45){$b_1$}
   \put(0,32){$b_2$}  
   \put(3,16){$b_3$}   
   \put(16,4){$b_4$}   
   \put(23,10){$\ldots$} 
   \put(36,1){$b_n$}
   \put(50,5){$c_1$} 
   \put(57,18){$c_2$} 
   \put(55,33){$c_3$} 
   \put(42,46){$c_4$} 
   \put(35,38){$\ldots$} 
   \put(22,49){$c_n$} 
\end{picture}
\caption{\bf The filling of the annulus $\sE'$}
\label{fig:triangles}
\end{figure}

%%% Shrink the figure slightly and add the annulus E' 
%%% Also say in the caption that E' is the annulus, D' is the disk. 

We start by attaching a Euclidean triangle to vertices $b_1, b_2, b_3$, 
which we denote by $\bigtriangleup(b_1,b_2,b_3)$ (see Figure~\ref{fig:triangles}). 
We choose the angle $\angle b_2$ at the vertex $b_2$ so that 
$\theta_2+\angle b_2$ is a multiple of $\pi$. Assuming
$0 \leq \angle b_2 < \pi$, there is a unique such triangle. Attach an isometric 
triangle to $c_1, c_2, c_3$. Now consider the points $b_1, b_3, b_4$.
Again, there exists a Euclidean triangle with one edge equal to
the newly introduced segment $[b_1,b_3]$, another edge equal to the 
segment $[b_3,b_4]$ and an angle at $b_3$ that makes the total angle
at $b_3$, including the contribution from the triangle 
$\bigtriangleup(b_1,b_2,b_3$), a multiple of $\pi$. 
Attach this triangle to the vertices $b_1, b_3, b_4$ and an identical triangle 
to the vertices $c_1,c_3,c_4$. Continue in this fashion until finally adding
triangles $\bigtriangleup(b_1,b_n, c_1)$ and $\bigtriangleup(c_1,c_n, b_1)$.
Due to the symmetry, the two edges connecting $b_1$ and $c_1$
have equal length, and we can glue them together.  We call the union of the
added triangles $\sD'$.  Notice that the involution on $\sE'$ extends to $\sD'$.  
Let $\sD = \sD(\alpha)$ be the quotient of $\sD'$, and note that $\sD$ is a 
punctured disk attached to $\alpha^*$ in the boundary of $\sE$. 

For $i \not = 1$, the total angle at $b_i$ and at $c_i$ is a multiple of $\pi$ 
and is larger than $\theta_i > \pi$; therefore, it is at least $2\pi$.  We 
have added $2\,(n-1)$ triangles. Hence, the sum of the total angles of all 
vertices is $2\sum_i \theta_i + 2\,(n-1)\pi$, which is a multiple of $2\pi$.
Therefore, the sum of the angles at $b_1$ and $c_1$ is also a multiple
of $2\pi$. But they are equal to each other, and each one is larger than $\pi$.
This implies that they are both at least $2\pi$. It follows that the 
quadratic differential $q'$ extends over $\sD'$ symmetrically 
with quotient an extension of $q$ to $\sD$.  

Thus, attaching the disk $\sD(\alpha)$ to every boundary component $\alpha^*$ 
in $\bdy \sY$ gives a point $q \rY \in \calQ(Y)$.  This completes the 
construction of the map $\pi_Y$. 

We now show that the distance in $\calT(Y)$ between $q \rY$ and 
$x \rY$ is uniformly bounded. For this, we examine the extremal lengths
of curves in two conformal structures. Since $Y$ is isolated in $q$, 
the boundaries of $Y$ are short in $x$. This implies, using \cite{minsky:PR} that, 
for any essential curve $\gamma$ in $Y$, the extremal lengths of $\gamma$ 
in $x$ and in $x \rY$ are comparable
\begin{equation} \label{eq:Ext}
\Ext_{x \rY}(\gamma) \emul \Ext_x(\gamma) 
\end{equation}
(see the proof of Theorem 6.1 in \cite[page 283, line 19]{minsky:PR}). 
We need to show that the extremal lengths of $\gamma$ in $q$ and in 
$q \rY$ are comparable as well. This obtain this after applying 
\lemref{Length-Comparison} twice. Once considering $\sY$ as 
a subset of $q$ and once as a subset of $q \rY$, \lemref{Length-Comparison}
implies:
$$
\Ext_x(\gamma) \emul \Ext_\sY(\gamma) \emul \Ext_{q \rY}(\gamma).
$$
Since the extremal lengths of curves are comparable, the distance in the 
$\calT(Y)$ between $x \rY$ and $q \rY$ is uniformly bounded above
\cite[Theorem 4]{kerckhoff:AG}. 

We note that defining the map $\pi_Y$ involved a choice of labeling
of the points $\{ a_i \}$. However, the above argument will work for 
any labeling.  In fact, for any labeling of points in a boundary component 
of $\sY$ in $q$, one can use the corresponding labeling $A(\sY)$ in 
$(Aq)$ so that $A (q \rY) = (A q) \rY$. Since all the different labeling 
result in points that are close in $\calT(Y)$ to $x \rY$, Equation~\eqref{Eq:A} 
holds independently of the choices made. This finishes the proof. 
\end{proof}

\section{Projection of a \Teich geodesic to a subsurface}
\label{Sec:Proj}

As mentioned before, a quadratic differential $q$ defines a \Teich geodesic
$\calG \from \RR \to \calQ(S)$ by taking
$$
\calG(t) = (x_t, q_t), \quad q_t = \begin{bmatrix} e^t & 0 \\ 0&e^{-t}\end{bmatrix} q
$$
where $x_t$ is the underlying Riemann surface for $q_t$.  Let $\lambda_+$ 
and $\lambda_-$ be the horizontal and the vertical foliations of $q_t$. 

Recall that a point $x \in \calT(S)$ has an associated shortest marking 
$\mu_x$.  We similarly define, for any $(x,q) \in \calQ(S)$, a shortest 
marking $\mu_q$.  The marking $\mu_q$ has the same pants decomposition 
and the same set of lengths $\{l_\alpha\}$ as $\mu_x$.  However, we
use the flat metric of $q$ to define the transversals $\tau_\alpha$, as follows. 
Recall that $q^\alpha$ is the annular cover of $q$ with respect to $\alpha$. 
Define $\tau_\alpha$ to be any arc connecting the boundaries of $q^\alpha$
that is perpendicular to the geodesic representative of the core. 
That is, the transversal is the quadratic differential perpendicular
instead of the hyperbolic perpendicular. 

In what follows, we often replace $q_t$ subscripts simply with $t$.
For example, $\ell_t(\alpha)$ is short for $\ell_{q_t}(\alpha)$, while
$\mu_t$ is short for $\mu_{q_t}$ and
$M_t(\alpha, Y)$ is short for $M_{q_t}(\alpha, Y)$. 
We let $t_\alpha$ be the time when $\alpha$ is
\emph{balanced} along $\calG$ (see \eqnref{Flat-Length}).
We need the following two statements. First we have a lemma that is 
contained in the proof of Theorem~3.1 of~\cite{rafi:CM}.

\begin{lemma}
\label{Lem:Convex}
There is a uniform constant $c \geq 0$ so that 
$$M_s(\alpha, Y) \leq M_t(\alpha, Y) + c$$ for all $s \leq t \leq
t_\alpha$ and for all $t_\alpha \leq t \leq s$. \qed
\end{lemma}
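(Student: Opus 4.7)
The plan is to use the estimate
$$M_t(\alpha, Y) \emul \log \frac{\size_{q_t}(Y)}{\ell_{q_t}(\alpha)}$$
from \thmref{Length}, and show that the right-hand side, viewed as a function of $t$, is coarsely non-decreasing on $(-\infty, t_\alpha]$ and coarsely non-increasing on $[t_\alpha, \infty)$. Taking logs and applying the estimate then gives the stated inequality directly.

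First I would recall that along the \Teich geodesic the $q$--length of any curve $\gamma$ evolves as $\ell_{q_t}(\gamma) \emul h_t(\gamma) + v_t(\gamma)$, where $h_t(\gamma) = e^{t} h_0(\gamma)$ and $v_t(\gamma) = e^{-t} v_0(\gamma)$ depend only on the (time-independent) horizontal and vertical foliations of $q$. The balanced time is characterized by $h_{t_\alpha}(\alpha) = v_{t_\alpha}(\alpha)$, so the vertical component dominates for $t \leq t_\alpha$, giving $\ell_{q_t}(\alpha) \emul v_t(\alpha) = e^{-t} v_0(\alpha)$, and symmetrically $\ell_{q_t}(\alpha) \emul h_t(\alpha) = e^{t} h_0(\alpha)$ for $t \geq t_\alpha$.

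The key computation is the following. For any essential curve $\beta \subset Y$ and any $t \leq t_\alpha$,
$$\frac{\ell_{q_t}(\beta)}{\ell_{q_t}(\alpha)} \emul \frac{h_t(\beta) + v_t(\beta)}{v_t(\alpha)} = \frac{e^{2t}\, h_0(\beta) + v_0(\beta)}{v_0(\alpha)},$$
which is manifestly non-decreasing in $t$. Since $\size_{q_t}(Y)$ is the infimum of $\ell_{q_t}(\beta)$ over essential curves $\beta$ in $Y$, the ratio $\size_{q_t}(Y)/\ell_{q_t}(\alpha)$ is a pointwise infimum of functions each coarsely non-decreasing in $t$, and hence is itself coarsely non-decreasing on $(-\infty, t_\alpha]$. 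The analogous computation on $[t_\alpha, \infty)$ (with roles of $h$ and $v$ reversed) shows coarse non-increase there. Passing to logarithms and invoking \thmref{Length} yields the lemma.

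The point requiring care is the bookkeeping of the multiplicative constants hidden in $\emul$: we need to know that the constants appearing in \thmref{Length} and in the flat-length formula \eqnref{Flat-Length} depend only on the topology of $S$, not on $\beta$, $Y$, or $q$. This uniformity is what permits the infimum over the infinite family of essential curves in $Y$ to remain a coarse inequality, and it is what allows the additive error $c$ in the statement to be chosen once and for all.
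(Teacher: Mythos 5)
Your overall strategy is the right one, and since the paper itself defers this lemma to the proof of Theorem~3.1 of \cite{rafi:CM} rather than proving it, your argument is essentially the expected reconstruction: reduce the coarse unimodality of $M_t(\alpha,Y)$ to that of $\log\bigl(\size_{q_t}(Y)/\ell_{q_t}(\alpha)\bigr)$, and verify the latter by writing each flat length as $h_t+v_t$ with $h_t=e^{t}h_0$ and $v_t=e^{-t}v_0$. The monotonicity computation for $t\le t_\alpha$, the symmetric case for $t\ge t_\alpha$, and the observation that an infimum of uniformly coarsely non-decreasing functions is coarsely non-decreasing are all correct, and your closing remark about uniformity of constants is exactly the right thing to worry about.

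There are, however, two places where the argument as written does not deliver the stated conclusion. First, and more seriously: the comparison $M_t(\alpha,Y)=\Mod_{x_t}(\sE)\emul\log\bigl(\size_{q_t}(Y)/\ell_{q_t}(\alpha)\bigr)$ recorded in \secref{Quadratic} is \emph{multiplicative}, so from $\size_{q_s}(Y)/\ell_{q_s}(\alpha)\lmul\size_{q_t}(Y)/\ell_{q_t}(\alpha)$ you only obtain $M_s(\alpha,Y)\leq C\,M_t(\alpha,Y)+c$ for some uniform $C>1$, not the purely additive bound $M_s\leq M_t+c$ claimed in the lemma. This weaker form would in fact suffice for every use of the lemma in the paper (one replaces $M_0+c$ by $CM_0+c$ in the definition of $I_Y$ and in the hypothesis of \thmref{LackOfMotion}), but it is not what you set out to prove; to get the additive form you would need an additive version of the modulus estimate, say $\Mod(\sE)\eadd\tfrac1{2\pi}\log(\size/\ell)$, or a direct coarse-monotonicity argument for the moduli themselves. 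Second, when $Y$ is a pair of pants, $\size_q(Y)$ is defined as the diameter of $\sY$, not as the length of a shortest essential curve (there are none), so the ``infimum over $\beta$'' step does not literally apply in that case; it must be handled separately, e.g.\ by comparing the diameter of a flat pair of pants with geodesic boundary to its boundary lengths, which again evolve as $h_t+v_t$ for fixed topological data.
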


Second we have a theorem that follows from the proof of 
Theorem~5.5 of~\cite{rafi:SC}.

\begin{theorem}
\label{Thm:LackOfMotion}
There are constants $M_0$ and $C$ so that, if  $M_t(\alpha, Y) \leq M_0+ c$ 
for some boundary component $\alpha$, then either
\begin{equation*}
d_Y(\mu_t, \lambda_-)\leq C \quad\text{or}\quad d_Y(\mu_t, \lambda_+)\leq C. 
\end{equation*}
\end{theorem}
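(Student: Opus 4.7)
The plan is to extend the bounded-modulus hypothesis from a single time $t$ to a semi-infinite ray, and then exploit a lack-of-motion principle for the subsurface projection to identify $\mu_t \rY$ with one of the end-point foliations $\lambda_\pm \rY$.

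\emph{Step 1 (Extension by convexity).} By \lemref{Convex}, $s \mapsto M_s(\alpha, Y)$ is coarsely unimodal with its maximum near $t_\alpha$. Consequently, if $M_t(\alpha, Y) \leq M_0 + c$, then either (i) $t \leq t_\alpha$ and $M_s(\alpha, Y) \leq M_0 + 2c$ for every $s \leq t$, or (ii) $t \geq t_\alpha$ and the same bound holds on the ray $[t,\infty)$. Time-reversing the Teichm\"uller flow $g_t \mapsto g_{-t}$ swaps $\lambda_+$ and $\lambda_-$ and exchanges the two cases, so I would treat case (ii) and prove $d_Y(\mu_t,\lambda_-) \leq C$; the case (i) conclusion $d_Y(\mu_t,\lambda_+) \leq C$ then follows by symmetry.

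\emph{Step 2 (Lack of motion on the ray).} The heart of the argument is the claim that if $M_s(\alpha, Y) \leq M_0 + 2c$ throughout an interval $[s_1,s_2]$, then $d_Y(\mu_{s_1},\mu_{s_2}) \leq C_1$ for a uniform constant $C_1$. Combining the bound on the modulus with the estimate $\Mod_{x_s}(\sE) \emul \log(\size_{q_s}(Y)/\ell_{q_s}(\alpha))$ from \cite[Lemma 3.6]{rafi:SC} forces $\size_{q_s}(Y) \emul \ell_{q_s}(\alpha)$, so $\sY_s$ is a narrow flat collar of $\alpha$. By \thmref{Length}(1), any curve $\gamma \subset Y$ has $\Ext_{x_s}(\gamma) \emul \ell_{q_s}(\gamma)^2 / \ell_{q_s}(\alpha)^2$, so for $\gamma$ to contribute to the shortest marking $\mu_s$ one needs $\ell_{q_s}(\gamma) \emul \ell_{q_s}(\alpha)$. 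Each such $\gamma$ lays down a combinatorially restricted collection of arcs inside the pinched flat subsurface $\sY_s$, and as $s$ varies these arcs can only change by twists about $\alpha$. Since $Y$ is not an annulus, such twists are invisible in $\AC(Y)$, so $\mu_s \rY$ stays in a bounded neighborhood.

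\emph{Step 3 (Identifying the limit with $\lambda_-$).} Applying Step 2 with $s_1 = t$ and arbitrary $s_2 \geq t$ gives $d_Y(\mu_t, \mu_{s_2}) \leq C_1$ uniformly in $s_2$. As $s_2 \to \infty$, the point $x_{s_2}$ converges in the Thurston boundary of $\calT(S)$ to $\lambda_-$, and the short curves of $\mu_{s_2}$ converge projectively in $\PMF(S)$ to $\lambda_-$. Combined with the bounded motion of the $Y$-projection, these shadows must accumulate in $\AC(Y)$ on $\lambda_- \rY$ up to bounded error, yielding $d_Y(\mu_t, \lambda_-) \leq C$ as required.

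The \emph{main obstacle} is Step 2: promoting the flat-geometric ``narrow collar'' picture into a uniform $\AC(Y)$-bound on how $\mu_s \rY$ can evolve. One must catalog the possible combinatorics of the arcs of short $\mu_s$-curves inside $\sY_s$ at two different times, and confirm that the only obstruction to their agreement is a twist about $\alpha$ which $\AC(Y)$ does not see. This combinatorial analysis is precisely the content of the proof of Theorem~5.5 of \cite{rafi:SC}.
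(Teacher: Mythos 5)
The paper never proves this statement: it is imported with the remark that it ``follows from the proof of Theorem~5.5 of~\cite{rafi:SC}'', and that argument is designed to give the conclusion at the single time $t$ --- when $M_t(\alpha,Y)$ is bounded, the flat geometry of $\sY$ in $q_t$ directly forces $\I(\mu_t\rY,\lambda_+\rY)=O(1)$ or $\I(\mu_t\rY,\lambda_-\rY)=O(1)$. Your Step~1 is fine (this is exactly how \lemref{Convex} is meant to be used), but your Step~2 is, as you yourself note, the content of the cited proof, so at the heart of the argument you are restating the citation rather than supplying a proof; moreover the sketch around it is inaccurate: $\size_{q_s}(Y)\emul\ell_{q_s}(\alpha)$ does not make $\sY_s$ a narrow collar of $\alpha$ (curves inside $Y$ may be far shorter than $\alpha$ and $\sY_s$ may have large diameter), and \thmref{Length}(1) applies to the pieces $Y\in\calY$ of the thick--thin decomposition of $x_s$, which your fixed subsurface need not be.

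The genuine gap is Step~3. You need $d_Y(\mu_s,\lambda_-)=O(1)$ uniformly for large $s$, and you deduce it from convergence of $x_s$ to $\lambda_-$ in the Thurston boundary. That convergence requires $\lambda_-$ to be uniquely ergodic and fails in general (Masur); more importantly, even when it holds, convergence in $\PMF(S)$ does not control subsurface projections: one can twist an approximating sequence about a curve contained in $Y$ an unbounded number of times without changing its projective limit, so ``the shadows must accumulate on $\lambda_-\rY$ up to bounded error'' does not follow. The fact you actually need --- that beyond the interval of isolation the projection $\mu_s\rY$ stays a bounded distance from $\lambda_-\rY$ --- is precisely what \corref{Ends} and the first bullet of \thmref{Proj} deduce \emph{from} this theorem and from \cite[Theorem~5.5]{rafi:SC}; it has the same strength as the statement being proved, so as written Step~3 is circular. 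The repair is to use the cited proof in its original, time-$t$ form (with the choice of $\lambda_+$ versus $\lambda_-$ governed by the position of $t$ relative to $t_\alpha$), which makes the excursion to $s\to\infty$ and the limiting argument unnecessary.
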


We now define $I_Y$, the \emph{interval of isolation} for $Y$.
Choose a large enough $M_0$ (we need $M_0 >m_0$ as in 
\lemref{Length-Comparison} and we need $M_0$ to satisfy \thmref{LackOfMotion}).
Define the interval $I_{\alpha, Y} \subset \RR$ to be empty when 
$M_{t_\alpha}(\alpha,Y) < M_0$ and otherwise to be
the largest interval containing $t_\alpha$ so that $M_t(\alpha, Y) \geq
M_0$ for all $t \in I_{\alpha, Y}$. Define
$$
I_Y = \bigcap_{\alpha \subset \bdy Y} I_{\alpha, Y}.
$$
 Note that, by 
Lemma~\ref{Lem:Convex}, for any $t$ outside of $I_Y$, there is 
a boundary component $\alpha$ such that $M_t(\alpha, Y) \leq M_0 + c$.  

\begin{theorem} \label{Thm:Proj}
Let $\calG \from \RR \to \calQ(S)$ be a \Teich geodesic with $\calG(t) = (x_t, q_t)$.
Let $Y$ be a subsurface with the interval of isolation $I_Y$. 
Then there exists a geodesic $\calF \from I_Y \to \calQ(Y)$ with 
$\calF(t)= (y_t, p_t)$, so that
\begin{itemize}
\item If $[a,b] \cap I_Y = \emptyset$ then 
$$
d_Y(\mu_a, \mu_b) = O(1).
 $$
\item For $t \in I_Y$,
$$ 
d_{\calT(Y)} \big( x_t \rY , y_t  \big) = O(1).
$$
\end{itemize}
In fact, we may take $p_t = q_t \rY$. 
\end{theorem}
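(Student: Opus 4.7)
The plan is to define $\calF(t) = (y_t, p_t)$ by applying the projection of \thmref{Restriction} pointwise: set $p_t := q_t \rY$ for every $t \in I_Y$ and let $y_t$ denote its underlying Riemann surface. By definition of $I_Y$, the surface $Y$ is isolated in $q_t$ throughout $I_Y$, so $q_t \in \calQ_Y(S)$ and $\pi_Y$ is available. The second bulleted estimate $d_{\calT(Y)}(x_t \rY, y_t) = O(1)$ is then immediate from~\eqref{Eq:Y}.

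To see that $t \mapsto \calF(t)$ traces out a Teichm\"uller geodesic in $\calQ(Y)$, I would fix $t_0 \in I_Y$ and apply the equivariance~\eqref{Eq:A} with $A = g_{t - t_0} \in \SL(2,\RR)$: since both $q_t$ and $q_{t_0}$ lie in $\calQ_Y(S)$ for $t \in I_Y$, we get
$$
d_{\calT(Y)}\bigl(p_t,\, g_{t-t_0}(p_{t_0})\bigr) = O(1).
$$
Thus $p_t$ uniformly shadows the genuine Teichm\"uller geodesic through $p_{t_0}$ driven by the flow, and I may replace $\calF$ by that true geodesic (the second bullet survives the bounded correction) to conclude its existence.

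The first bullet is where the real content lies. Fix $[a,b]$ disjoint from $I_Y$. By the observation following \lemref{Convex}, for each $t \in [a,b]$ there is a boundary component $\alpha \subset \bdy Y$ with $M_t(\alpha, Y) \leq M_0 + c$, and then \thmref{LackOfMotion} forces $\mu_t$ to lie within distance $C$ of either $\lambda_-$ or $\lambda_+$ in $\AC(Y)$. The main obstacle is to rule out the possibility that $\mu_a$ sits close to $\lambda_-$ while $\mu_b$ sits close to $\lambda_+$. I would handle this using the quasi-concavity of $t \mapsto M_t(\alpha, Y)$ around its maximum $t_\alpha$ supplied by \lemref{Convex}: if $I_Y$ is non-empty then $[a,b]$ lies entirely on one side of every $t_\alpha$, so a single boundary component $\alpha$ whose expanding annulus has collapsed below $M_0 + c$ can be chosen uniformly over $[a,b]$ (monotonicity keeps it collapsed throughout the interval), and \thmref{LackOfMotion} can then be applied with a consistent choice of $\lambda_\pm$; if $I_Y$ is empty then no boundary ever becomes isolated, which combined with \thmref{LackOfMotion} forces $d_Y(\lambda_-, \lambda_+) = O(1)$, and the triangle inequality finishes. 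Coordinating the choice of $\lambda_\pm$ uniformly over the interval, possibly across several different boundary components $\alpha$, is the most delicate bookkeeping and is where most of the work lies.
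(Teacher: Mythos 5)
Your treatment of the second bullet and of the construction of $\calF$ is essentially the paper's: set $p_t = q_t \rY$, use \eqref{Eq:Y} for the distance bound and the equivariance \eqref{Eq:A} to see that, up to a bounded correction, the path is the geodesic obtained by flowing $q_t\rY$ at the initial time; that part is fine. The gap is in the first bullet, which is where the content lies, and the step you yourself flag as ``the most delicate bookkeeping'' is precisely the step that is missing. First, your supporting claim that when $I_Y \neq \emptyset$ the interval $[a,b]$ lies on one side of \emph{every} $t_\alpha$ is not correct: the balance times of the various boundary components need only lie in their own intervals $I_{\alpha,Y} \supseteq I_Y$, not in $I_Y$, so some $t_\alpha$ can sit inside $[a,b]$. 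What is true, and what you actually need, is that the witness $\alpha$ for the endpoint of $[a,b]$ nearest to $I_Y$ satisfies $M_t(\alpha,Y)\leq M_0+c$ for all $t\in[a,b]$, by \lemref{Convex}. But even granting a single collapsed boundary component throughout $[a,b]$, \thmref{LackOfMotion} only gives, at each time separately, the disjunction $d_Y(\mu_t,\lambda_-)\leq C$ or $d_Y(\mu_t,\lambda_+)\leq C$; nothing in its statement ties the sign to $\alpha$, to the side of $t_\alpha$, or to anything varying in $t$, so the ``consistent choice of $\lambda_\pm$'' is exactly what has to be proved, and it is not. Likewise, when $I_Y=\emptyset$, the assertion that \thmref{LackOfMotion} alone forces $d_Y(\lambda_-,\lambda_+)=O(1)$ does not follow from what you cite.

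The paper's proof shows that no consistent choice is needed, which dissolves the bookkeeping entirely. For $t\in[a,b]$ let $J_-$ (resp.\ $J_+$) be the set of times at which $d_Y(\mu_t,\lambda_-)\leq C$ (resp.\ $d_Y(\mu_t,\lambda_+)\leq C$); by the discussion above, $J_-\cup J_+=[a,b]$. If one of them is empty, the triangle inequality bounds $d_Y(\mu_a,\mu_b)$ by $2C$. If both are nonempty, they are closed and cover the connected interval $[a,b]$, hence they intersect; at a common time $\mu_t$ is $C$--close to both foliations, so $d_Y(\lambda_-,\lambda_+)\leq 2C$, and then $d_Y(\mu_a,\mu_b)\leq 4C$ by the triangle inequality, with no case distinction on whether $I_Y$ is empty. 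To complete your argument you should either reproduce this connectedness trick, or replace \thmref{LackOfMotion} by the sign-specific statements from \cite[Theorem 5.5]{rafi:SC} as used in \corref{Ends} (before $I_Y$ the projection stays near $\lambda_+ \rY$, after it near $\lambda_- \rY$); as written, the crucial step of the first bullet is not established.
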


\begin{figure}[ht]
\setlength{\unitlength}{0.01\linewidth}
\begin{picture}(40,37)
\put(0,0){\includegraphics[width=40\unitlength]{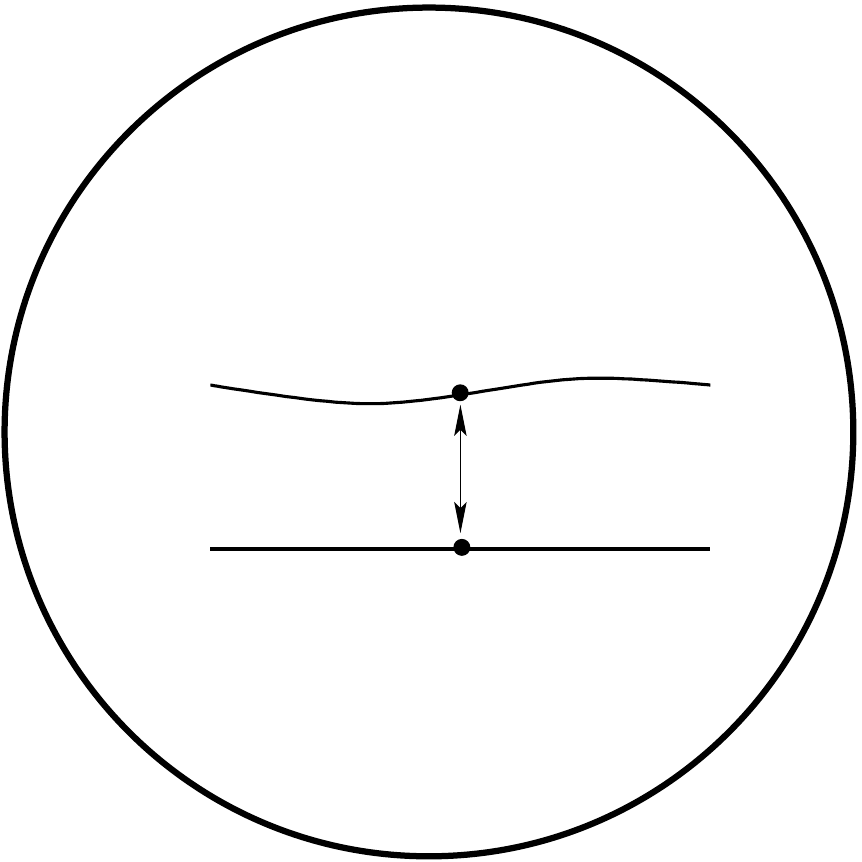}}
   \put(38,32){$\calT(Y)$}
   \put(20,23.5){$x_t \rY$}
   \put(20,11.3){$y_t$}
   \put(23,17){$O(1)$}
\end{picture}
\caption{The projection of $\calG$ to $\calT(Y)$ fellow travels the geodesic 
$\calF$.} 
\label{fig:FN} 
\end{figure}

\begin{proof}
%\thmref{Proj} is essentially follows from results contained in  
%\cite{minsky:PR}, \cite{rafi:SC}, \cite{rafi:CM} and \cite{rafi:TT}. We sketch 
%the arguments below. 
For every $t\in[a,b]$, there exists a boundary component $\alpha$
so that $M_t(\alpha, Y) \leq M_0+c$. By \thmref{LackOfMotion}
$$
d_Y(\mu_t, \lambda_-)\leq C \quad\text{or}\quad  d_Y(\mu_t, \lambda_+)\leq C.
$$
Let $J_- \subset [a,b]$ be the set of times where former holds and
$J_+\subset [a,b]$ be the set of times where latter holds. If $J_-$ or $J_+$
is empty, we are done by the triangle inequality. Otherwise, we note
that these intervals are closed and have to intersect. This implies
that $d_Y(\lambda_-,\lambda_+)\leq 2C$. Again we are done
after applying the triangle inequality; the bound on $d_Y(\mu_a, \mu_b)$ 
is at most $4C$. This proves the first conclusion of \thmref{Proj}.

To obtain the second conclusion, we construct the candidate 
geodesic arc $\calF$ in $\calT(Y)$.  Let $I_Y=[c,d]$. As suggested 
in the statement of the theorem, let $p_c = q_c \rY$ and let 
$\calF=(y_t, p_t)$ be the geodesic segment from $[c,d] \to \calQ(Y)$ defined by
$$
p_t  = \begin{bmatrix} e^{t-c} & 0 \\ 0&e^{-t+c}\end{bmatrix} p_c.
$$
In fact, if we make consistent choices in the construction of $q_t \rY$
for different values of $t$, we have $p_t = q_t \rY$. Now 
Equation~\eqref{Eq:A} in \thmref{Restriction} implies
$$
d_{\calT(Y)}(x_t \rY, y_t)= O(1). 
$$
This finishes the proof. 
\end{proof}

For a \Teich geodesic segment whose end points are in the thick part
of the \Teich space, we can look at the short markings at the end points
of the segment instead of the horizontal and the vertical foliations, to determine
which subsurfaces are isolated along the geodesic segment. That is,
the end invariants can be taken to be the short markings instead of
the horizontal and the vertical foliations. 

\begin{corollary} \label{Cor:Ends}
Let $\calG \from \RR \to \calT(S)$ be a \Teich geodesic. Suppose $a<b$
are times so that $\calG(a)$ and $\calG(b)$ are in the thick part. 
Then, for every subsurface $Y$ we have 
\begin{itemize}
\item Either $I_Y \subset[a,b]$,
$$
\I(\lambda_- \rY, \mu_b \rY)=O(1)
\quad\text{and}\quad
\I(\lambda_+ \rY, \mu_a \rY)=O(1).
$$ 
In particular, 
$$
d_Y(\lambda_-, \lambda_+)\eadd d_Y(\mu_a, \mu_b).
$$
\item Or  $I_Y \cap [a,b] = \emptyset$ and
$$
d_Y(\mu_a, \mu_b)=O(1).
$$
\end{itemize}
\end{corollary}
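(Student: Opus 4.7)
The plan is to show first that the thickness hypothesis at $a$ and $b$ forces $a,b\notin I_Y$, which produces a clean dichotomy, and then to apply \thmref{Proj} and \thmref{LackOfMotion} in each case.

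The first step is the observation that if $\calG(t)$ lies in the thick part then $t\notin I_Y$ for every proper subsurface $Y$. Indeed, \thmref{Length} gives $M_t(\alpha,Y)\lmul 1/\Ext_{x_t}(\alpha)$ for every boundary component $\alpha\subset\bdy Y$, so the uniform lower bound on extremal lengths coming from thickness supplies a uniform upper bound on $M_t(\alpha,Y)$. After enlarging $M_0$ if necessary (harmless because \lemref{Length-Comparison} and \thmref{LackOfMotion} require only that $M_0$ be sufficiently large), this bound puts $t$ outside $I_{\alpha,Y}$ and hence outside $I_Y$. Now each $I_{\alpha,Y}$ is a closed interval containing the balanced time $t_\alpha$, so $I_Y=\bigcap_{\alpha\subset\bdy Y}I_{\alpha,Y}$ is itself a (possibly empty) interval. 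Combined with $a,b\notin I_Y$, connectedness forces the desired dichotomy: either $I_Y\subset[a,b]$ or $I_Y\cap[a,b]=\emptyset$.

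In the case $I_Y\cap[a,b]=\emptyset$, the bound $d_Y(\mu_a,\mu_b)=O(1)$ is exactly the first bullet of \thmref{Proj}. In the case $I_Y\subset[a,b]$, since $b\notin I_Y$ I pick $\alpha\subset\bdy Y$ with $M_b(\alpha,Y)\leq M_0+c$, and \thmref{LackOfMotion} yields $d_Y(\mu_b,\lambda_-)\leq C$ or $d_Y(\mu_b,\lambda_+)\leq C$. To select the correct alternative, I would use that $b$ lies strictly to the right of $I_Y$ and invoke the asymptotics of the flow: as $t\to+\infty$ the $q_t$-vertical length of every curve shrinks exponentially, so $\mu_t$ converges to $\lambda_-$ in $\PMF(S)$, and in particular $\mu_t\rY$ approaches $\lambda_-\rY$ in $\AC(Y)$. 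Feeding this asymptotic back through the first bullet of \thmref{Proj} applied on $[\sup I_Y,T]$ with $T\to+\infty$ propagates the closeness down to time $b$, giving $d_Y(\mu_b,\lambda_-)=O(1)$; the symmetric argument at $a$ gives $d_Y(\mu_a,\lambda_+)=O(1)$.

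Finally, to upgrade the subsurface-distance bound to the claimed intersection-number bound, I would use the flat geometry: since $x_b$ is thick, every curve in the pants decomposition of $\mu_b$ has uniformly bounded $q_b$-length, and its transverse measure with respect to the vertical foliation $\lambda_-$ is its $q_b$-horizontal length, which is therefore bounded as well. Lifting to $S^Y$ preserves this estimate and gives $\I(\lambda_-\rY,\mu_b\rY)=O(1)$; the argument at $a$ is symmetric, with the roles of $\lambda_-$ and $\lambda_+$ swapped. The ``in particular'' conclusion is then one line of triangle inequality in $\AC(Y)$. The main obstacle is disentangling the two options in \thmref{LackOfMotion}: the statement that $\mu_b$ lies close to $\lambda_-$ (and not $\lambda_+$) encodes the stable/unstable asymmetry of the geodesic flow and must be extracted carefully from the monotone behavior of the vertical and horizontal transverse measures along the geodesic.
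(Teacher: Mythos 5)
Your setup is fine and in places more careful than the paper's: the observation that thickness at $a$ and $b$ forces $a,b\notin I_Y$ (via part (2) of \thmref{Length} bounding $M_t(\alpha,Y)\lmul 1/\Ext_{x_t}(\alpha)$), the resulting dichotomy, and the use of the first bullet of \thmref{Proj} in the disjoint case all match the intended argument. The gap is in the case $I_Y\subset[a,b]$, where the paper does not disambiguate the two alternatives of \thmref{LackOfMotion} by a soft limiting argument at all: it directly invokes \cite[Theorem 5.5]{rafi:SC}, which states that for $t$ to the right of $I_Y$ one has $\I(\mu_t\rY,\lambda_-\rY)=O(1)$ (and symmetrically $\lambda_+$ to the left). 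Your two substitutes for that citation do not hold up. First, $\mu_t$ need not converge to $\lambda_-$ in $\PMF(S)$ as $t\to+\infty$ (this fails when $\lambda_-$ is not uniquely ergodic), and even when it does, subsurface projection is not continuous on $\PMF(S)$, so convergence there gives no control on $\mu_t\rY$ versus $\lambda_-\rY$ in $\AC(Y)$.

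Second, and more seriously, the final upgrade to the intersection-number bound conflates measured intersection with topological intersection of lifts. Boundedness of $\ell_{q_b}(\beta)$ for $\beta$ in the pants decomposition of $\mu_b$ does bound the transverse measure $h_b(\beta)=\I(\beta,\lambda_-)$, but a curve can have arbitrarily small transverse measure with respect to $\lambda_-$ while its essential lifts to $S^Y$ cross the lifted leaves of $\lambda_-$ arbitrarily many times --- this is exactly what happens for an annular $Y$ with core $\alpha$, where $\I(\beta\rY,\lambda_-\rY)$ records the relative twisting around $\alpha$ and is insensitive to the transverse measure. Controlling that twisting (via the analysis behind \eqnref{Twist} and the expanding-annulus moduli) is precisely the content of the cited Theorem 5.5, so it cannot be recovered from the length bound alone. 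To repair the proof you should either cite that theorem as the paper does, or reproduce its argument, treating the annular subsurfaces separately.
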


\begin{proof}
Since the endpoints lie in the thick part of \Teich space, the times
$a$ and $b$ are not in any interval $I_Y$. That is, $I_Y$
is either contained in $[a,b]$ or it is disjoint from it. If $I_Y =[c,d]$
then all markings $\mu_t$, $t\in[-\infty, c]$ project to a bounded set
in $\AC(Y)$. In fact, from \cite[Theorem 5.5]{rafi:SC} we know that
$\I(\mu_t \rY, \lambda_+\rY)=O(1)$. Therefore, $d_Y(\lambda_+, \mu_a)=O(1)$.
Similarly, for $t \in [d,\infty]$, $\I(\mu_t \rY, \lambda_-\rY)=O(1)$
and $d_Y(\lambda_-, \mu_b)=O(1)$. The corollary follows
immediately.
\end{proof}

\subsection*{Order of appearance of intervals $I_Y$}
By examining the subsurface projections one can determine
which curves $\alpha$ are short along a \Teich geodesic $\calG$. 
The following is the restatement of results in \cite{rafi:SC} in a way
that is more suitable for our purposes. Let $\calG$ be a \Teich
geodesic with horizontal and vertical foliations $\lambda_\pm$ and,
for a curve $\alpha$, let $\calZ(\alpha, D)$ be the set of subsurface $Z$ that 
are disjoint from $\alpha$ and have $d_Z(\lambda_+, \lambda_-) \geq D$. 

\begin{theorem} \label{Thm:M-Large}
 A curve $\alpha$ is short at some point along $\calG$ 
if and only if $\alpha$ is the boundary of a subsurface $Y$ so that 
$Y$ is filled with subsurfaces with large projections. That is, there
are constants $\ep$, $D_0$ and $D_1$ so that
\begin{itemize}
\item If $\Ext_t(\alpha) \leq \ep$ then $\alpha$ is a boundary component
of some subsurface $Y$, where $Y$ is filled by subsurfaces in 
$\calZ(\alpha, D_0)$.
\item Suppose that $\alpha$ is a boundary component of $Y$ and that $Y$ is
filled by elements of $\calZ(\alpha, D_1)$.  Then there is a time 
$t \in \RR$ when $\Ext_t(\alpha) \leq \ep$.
\end{itemize}
\end{theorem}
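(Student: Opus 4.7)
The proof of \thmref{M-Large} proceeds by induction on the complexity of the surface, exploiting the coarse dictionary between three descriptions of the geodesic $\calG$ near time $t$: the extremal length of $\alpha$ at $\calG(t)$, the moduli of the flat cylinder $\sF = F_{q_t}(\alpha)$ and the expanding annuli $E_{q_t}(\alpha, Y)$, and the subsurface projections of the end invariants $\lambda_\pm$. These are coupled by \thmref{Length}, \thmref{LackOfMotion}, and \thmref{Proj}.

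\emph{Forward direction.} Suppose $\Ext_t(\alpha) \leq \ep$. The decomposition in \thmref{Length}(2) forces one of $\Mod_t(\sE)$, $\Mod_t(\sF)$, $\Mod_t(\sG)$ to be comparable to $1/\ep$. If $\Mod_t(\sE)$ dominates, with $\sE$ expanding into an adjacent subsurface $Y$, then $M_t(\alpha,Y)$ is very large relative to $M_0$, and by \lemref{Convex} the interval of isolation $I_Y$ is correspondingly long. \thmref{Proj} then supplies a Teichm\"uller geodesic $\calF$ in $\calT(Y)$ of comparable length whose end invariants match $\lambda_\pm \rY$ up to bounded error, via \corref{Ends}. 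Applying the induction hypothesis inside $\calT(Y)$ to the short curves that must occur along $\calF$ yields subsurfaces $Z \subset Y$ disjoint from $\alpha = \bdy Y$ with $d_Z(\lambda_+,\lambda_-) \geq D_0$. The collected $Z$'s must fill $Y$: any essential curve $\beta$ of $Y$ disjoint from all of them would correspond to a direction in $\calT(Y)$ along which $\calF$ cannot progress, contradicting the length of $\calF$. When $\Mod_t(\sF)$ dominates instead, so $T_\alpha$ is very large, \eqnref{Modulus} combined with \eqnref{Flat-Length} forces one of the adjacent expanding annuli to also be large at a nearby moment, reducing to the previous case.

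\emph{Backward direction.} Suppose $\alpha \subset \bdy Y$ and $Y$ is filled by a collection $\{Z_i\} \subset \calZ(\alpha, D_1)$. We aim to produce a time $t$ at which $M_t(\alpha, Y)$ is large, since then $\Mod_t(\sE)$ will be large and \thmref{Length}(2) will yield $\Ext_t(\alpha) \leq \ep$. If $M_t(\alpha, Y) \leq M_0+c$ held for every $t$, then \thmref{LackOfMotion} would give $d_Y(\mu_t, \lambda_-) \leq C$ or $d_Y(\mu_t, \lambda_+) \leq C$ at every $t$. But since the $Z_i$'s fill $Y$ with large projections, a bounded geodesic image argument in $\AC(Y)$ combined with the filling hypothesis prevents any marking in $\AC(Y)$ from being simultaneously close to both $\lambda_+ \rY$ and $\lambda_- \rY$; the continuity of $\mu_t \rY$ in $t$ then produces a time at which $\mu_t \rY$ is far from both, contradicting the conclusion of \thmref{LackOfMotion} unless $M_t(\alpha, Y)$ is large at that time.

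\emph{Main obstacle.} The most delicate point is the coordinated choice of the constants $\ep$, $D_0$, $D_1$, $M_0$, $m_0$, and $C$ across the inductive reduction and the passage between $\calG$ and its restriction $\calF$. In the backward direction, the argument that filling forces a common balanced time for $\alpha$ requires combining the continuity of $M_t(\alpha, Y)$ in $t$ with \lemref{Convex} to ensure that the individual intervals $I_{Z_i}$ overlap enough to yield a single time at which $\Mod_t(\sE)$ is maximal.
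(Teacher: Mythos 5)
The paper does not prove this theorem from the machinery you invoke: its proof is a two-line citation, observing that the statement is a reformulation of Theorem~1.1 of \cite{rafi:SC} once one translates ``$\lambda_+$ and $\lambda_-$ have large relative intersection number in $Y$'' into ``some subsurface $Z$ of $Y$ has large projection distance'' via \cite[Corollary~D]{rafi:TL}. Your attempt to rederive the characterization from \thmref{Length}, \thmref{LackOfMotion} and \thmref{Proj} is a genuinely different (and more ambitious) route, but both directions have gaps at exactly the points where the real content of \cite{rafi:SC} lives.

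In the forward direction, the step ``the collected $Z$'s must fill $Y$, since any curve $\beta$ disjoint from all of them would be a direction along which $\calF$ cannot progress'' is not an argument, and the implication it asserts is false: a long geodesic in $\calT(Y)$ can concentrate all of its length in twisting about a single curve $\beta\subset Y$, in which case the only subsurface of $Y$ with large projection is the annulus about $\beta$, which does not fill $Y$. (Your treatment of the case where $\Mod_t(\sF)$ dominates is also off: large $T_\alpha$ does not force a nearby time at which an adjacent expanding annulus is large --- but that case needs no reduction, since the annulus with core $\alpha$ is then itself a filled $Y$ with $\alpha\subset\partial Y$.) In the backward direction, the contradiction you want from \thmref{LackOfMotion} does not materialize: the hypothesis that proper subsurfaces $Z_i\subsetneq Y$ have large projections gives no lower bound on $d_Y(\lambda_+,\lambda_-)$ itself, so ``$\mu_t$ is $C$--close to $\lambda_+$ or $\lambda_-$ in $\AC(Y)$ for every $t$'' is entirely consistent with the filling hypothesis, and no time at which $\mu_t\rY$ is far from both need exist. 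What is actually needed in both directions is the quadratic-differential estimate relating $\size_q(Y)$, $\ell_q(\alpha)$ and the intersection pattern of $\lambda_\pm$ restricted to $Y$; that estimate is not recoverable from the coarse statements you cite, which is why the paper imports it wholesale from \cite{rafi:SC}.
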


\begin{proof}
This is a restatement of \cite[Theorem 1.1]{rafi:SC} after the following:
two curves or arcs in $\AC(Y)$ have large intersection number if and only if 
their projections to some subsurface $Z$ of $Y$ is large.
(This assertion is well known and follows from \cite[Corollary D]{rafi:TL}.)  
We have just translated the condition about intersection numbers to a condition
about subsurface projections. 
\end{proof}

One consequence of the above theorem is that the order in which 
the intervals $I_Y$ appear in $\RR$ is essentially determined by any
geodesic $g$ in $\AC(S)$ connecting $\lambda_-$ to $\lambda_+$. 

\begin{proposition} \label{Prop:Order}
The boundary curves of any isolated surface are in a $2$--neighborhood of 
a geodesic $g$ in the curve complex. The order of appearance of 
intervals of isolations in $\RR$ is coarsely determined by the order 
in which the vertices $\partial Y$ appear along $g$. 
\end{proposition}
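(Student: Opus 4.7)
I will combine the Masur--Minsky bounded geodesic image theorem with \thmref{M-Large} for the first assertion, and apply the shadow theorem \thmref{Shadow} together with Morse stability of quasi-geodesics in the hyperbolic graph $\AC(S)$ for the second.

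For the $2$--neighborhood claim, let $Y$ be isolated and let $\alpha$ be any boundary component of $Y$ (or the core curve, if $Y$ is an annulus). Since $I_Y \neq \emptyset$, the definition of $I_Y$ forces $M_{t_\alpha}(\alpha, Y) \geq M_0$, and since $M_0 > m_0$, \thmref{Length} gives $\Ext_{x_{t_\alpha}}(\alpha) \lmul 1/M_0$, so $\alpha$ is extremally short at $x_{t_\alpha}$. \thmref{M-Large} then supplies a subsurface $Z$ disjoint from $\alpha$ with $d_Z(\lambda_-,\lambda_+)$ larger than any prescribed constant, in particular larger than the Masur--Minsky bounded geodesic image threshold. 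That theorem produces a vertex of $g$ within distance $1$ of $\partial Z$ in $\AC(S)$, and since $\alpha$ can be realized disjointly from $\partial Z$ we also have $d_{\AC(S)}(\alpha, \partial Z) \leq 1$. The triangle inequality then places $\alpha$ in the $2$--neighborhood of $g$.

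For the order claim, \thmref{Shadow} says that the map $t \mapsto \mu_t$, composed with the natural projection to $\AC(S)$, is an unparametrized quasi-geodesic joining $\lambda_-$ to $\lambda_+$. By Morse stability in the $\delta$--hyperbolic space $\AC(S)$, this quasi-geodesic fellow-travels $g$, with its parameter monotonically tracking the arc-length parameter of $g$ up to a bounded additive error. For any $t \in I_Y$, the curves of $\partial Y$ are extremally short at $x_t$, hence belong to $\calP_{x_t}$, so the image of $\mu_t$ in $\AC(S)$ lies within distance $1$ of each curve of $\partial Y$. Consequently, if $I_{Y_1}$ precedes $I_{Y_2}$ on $\RR$, then the shadow passes near $\partial Y_1$ before passing near $\partial Y_2$, and by the fellow-travel statement the vertices of $g$ corresponding to $\partial Y_1$ and $\partial Y_2$ appear in the same order along $g$, with reordering error controlled by the combined constants from \thmref{Shadow} and bounded geodesic image.

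The main delicate point, which motivates the word \emph{coarsely}, is that overlapping or closely nested intervals $I_{Y_i}$ may produce clusters of boundary vertices on $g$ whose fine-scale order is ambiguous within the additive constants of Morse stability and the $2$--neighborhood bound. Verifying that all such constants depend only on the topology of $S$ reduces to tracking the uniform quasi-geodesic constants in \thmref{Shadow} and the Masur--Minsky constant in bounded geodesic image, which is routine once those inputs are in hand.
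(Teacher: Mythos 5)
Your argument for the first assertion is essentially the paper's: \thmref{M-Large} supplies, for each boundary curve $\alpha$ of an isolated $Y$, a subsurface $Z$ disjoint from $\alpha$ with $d_Z(\lambda_-,\lambda_+)$ large, and \thmref{Bounded} forces some vertex of $g$ to miss $Z$, giving the $2$--neighborhood bound. For the order assertion, however, you take a genuinely different route. The paper stays inside the subsurface $Z$: it splits $g=g_-\cup g_0\cup g_+$ at the vertex missing $Z$, uses \thmref{Bounded} to see that $g_-$ projects near $\lambda_-\stroke{Z}$ and $g_+$ near $\lambda_+\stroke{Z}$, and then rules out the wrong ordering via \corref{Ends} (a time $t\in I_{Y'}$ occurring before $I_Y$ would force $d_Z(\mu_t,\lambda_+)=O(1)$, incompatible with $\partial Y'$ lying near $g_-$). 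You instead invoke the $Y=S$ case of \thmref{Shadow} together with Morse stability in $\AC(S)$: the shadow $t\mapsto\mu_t$ is an unparametrized quasi-geodesic whose nearest-point projection to $g$ is coarsely monotone, and since $\partial Y$ sits in the short marking throughout $I_Y$, the time-order of the intervals transfers to the order of vertices along $g$. This is correct and arguably cleaner, and there is no circularity since the $Y=S$ case of \thmref{Shadow} is Masur--Minsky's theorem rather than anything proved later in the paper; what the paper's argument buys in exchange is that it runs entirely on subsurface projections and \corref{Ends}, without needing stability of the shadow in $\AC(S)$. Two small imprecisions in your write-up, neither fatal: shortness of $\partial Y$ on $I_Y$ follows most directly from $\Ext_{x_t}(\alpha)\leq 1/M_t(\alpha,Y)\leq 1/M_0$ (the modulus of an embedded annulus bounds the extremal length of its core), rather than from \thmref{Length}(2), which presupposes $\alpha$ is in the thin part; and the constant $D_0$ from \thmref{M-Large} is fixed, not ``larger than any prescribed constant,'' so one must choose the constants so that $D_0$ exceeds the bounded-geodesic-image threshold, exactly as the paper implicitly does.
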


The proof uses both the description of a \Teich geodesic as well as 
some hyperbolicity result for the curve complex $\calC(S)$. Namely,
we use Masur and Minsky's bounded geodesic image theorem:

\begin{theorem}[Theorem 3.1 in \cite{minsky:CCII}] \label{Thm:Bounded}
If $Y$ is an essential subsurface of $S$ and $g$ is a geodesic in 
$\AC(S)$ all of whose vertices intersect $Y$ nontrivially, then the projected 
image of $g$ in $\AC(Y)$ has uniformly bounded diameter. 
\end{theorem}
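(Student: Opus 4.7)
The plan is to prove the theorem by contradiction, using the Teichm\"uller-geodesic machinery developed in this paper. Let $g = v_0, v_1, \ldots, v_n$ be a geodesic in $\AC(S)$ with every $v_i$ intersecting $Y$ essentially, and suppose $\diam_{\AC(Y)}(\pi_Y(g)) > D$ for a large constant $D$ to be fixed.

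First I would realize the endpoints inside Teichm\"uller space. Choose thick points $x, y \in \calT(S)$ whose short markings $\mu_x, \mu_y$ contain $v_0, v_n$ respectively; such points exist because any curve extends to a short marking of a thick surface, and $d_Y(\mu_x, \mu_y)$ differs from $d_Y(v_0, v_n)$ by at most an $O(1)$ error coming from the remaining (bounded) pants data. Let $\calG \colon [a,b] \to \calT(S)$ be the Teichm\"uller geodesic from $x$ to $y$. Since $d_Y(\mu_x, \mu_y) \geq D - O(1)$ is large and both endpoints are thick, \corref{Ends} forces the interval of isolation $I_Y$ to be nonempty and contained in $[a,b]$. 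For every $t \in I_Y$, each boundary component $\alpha$ of $Y$ satisfies $M_t(\alpha, Y) \geq M_0$, so by \thmref{Length}(2) the extremal length $\Ext_t(\alpha)$ is small and $\alpha$ enters the short marking pants decomposition $\calP_t$ throughout $I_Y$.

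Next I invoke the Masur--Minsky shadow theorem in its original form (proved in \cite{minsky:CCI} without using BGIT): the shadow $t \mapsto \calP_t$ of $\calG$ to $\AC(S)$ is a uniform unparameterized quasi-geodesic from (a curve near) $v_0$ to (a curve near) $v_n$. Hyperbolicity of $\AC(S)$ together with stability of quasi-geodesics implies this shadow fellow-travels $g$ within some uniform distance $K$. Picking any $t \in I_Y$ then produces a vertex $v_i$ of $g$ at $\AC(S)$-distance at most $K + 1$ of some $\alpha \in \bdy Y$. Decomposing $g$ at $v_i$ into sub-geodesics $g_1, g_2$ and iterating the argument on whichever piece still witnesses large $d_Y$ projection produces a sequence of vertices accumulating on $\bdy Y$; for $D$ large enough this eventually forces some vertex to be within $\AC(S)$-distance $1$ of a component of $\bdy Y$, hence disjoint from $Y$ --- contradicting the hypothesis on $g$.

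The hard part is making the final step genuinely produce a vertex \emph{disjoint from $Y$} rather than merely close to $\bdy Y$ in $\AC(S)$, since proximity in $\AC(S)$ is not in itself a topological disjointness statement. A cleaner route is strong induction on the topological complexity of the pair $(S, Y)$: one uses \thmref{Restriction} to pass to a subsurface of strictly smaller complexity where $\pi_Y$ is easier to control, with the base case of $Y$ an annulus handled directly by the twisting estimate \eqref{Eq:Twist} along $\calG$. This inductive strategy also sidesteps any circular dependence on \propref{Order}, whose proof itself invokes the theorem we are trying to prove.
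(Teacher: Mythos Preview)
The paper does not prove this statement at all: \thmref{Bounded} is quoted verbatim as ``Theorem~3.1 in \cite{minsky:CCII}'' and is used as an external black box in the proof of \propref{Order} and later in \thmref{Thin-Triangle}. So there is nothing to compare your argument against; any proof you give is necessarily a different route from the paper, because the paper's route is simply a citation.

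As for the proposal itself, it is not a proof but a plan with an acknowledged gap. You correctly identify that the iteration only produces vertices of $g$ that are \emph{close in $\AC(S)$} to $\partial Y$, and that this does not by itself yield a vertex \emph{disjoint} from $Y$; you then gesture at an induction on complexity via \thmref{Restriction} without carrying it out. That inductive step is the entire content of the theorem, and nothing in the sketch indicates how the passage to a smaller subsurface interacts with the original hypothesis that all $v_i$ intersect $Y$. More seriously, you should worry about circularity not just with \propref{Order} but with the imported inputs: the distance formula (\thmref{Distance}) comes from \cite{rafi:CM}, which in turn rests on the Masur--Minsky hierarchy machinery of \cite{minsky:CCII}, and that machinery already uses the bounded geodesic image theorem. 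If your goal is an independent proof of \thmref{Bounded} from the tools of this paper, you would first have to verify that every ingredient you invoke (\corref{Ends}, \thmref{LackOfMotion}, the shadow theorem) is logically prior to it, and the paper makes no such claim.
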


\begin{proof}[Proof of \propref{Order}]
By \thmref{M-Large}, a boundary curve $\alpha$ of any isolated subsurface 
$Y$ is disjoint from some subsurface $Z$ where the projection
distance $d_Z(\lambda_+, \lambda_-)$ is large. By \thmref{Bounded}, 
the geodesic $g$ has to miss $Z$ as well.
Hence $\alpha$ has a distance of at most $2$ from $g$. 

Write $g= g_- \cup g_0 \cup g_+$, where $Z$ intersects every curve
in $g_-$ and $g_+$ and where $g_0$ has length $10$ and $\partial Z$ 
is disjoint from a curve at the middle of $g_0$. From \thmref{Bounded} we 
have that the projection of $g_-$ to $\AC(Z)$ is in a bounded neighborhood of
$\lambda_- \stroke{Z}$ and the projection of $g_+$ to $\AC(Z)$ is in a
bounded neighborhood of $\lambda_+ \stroke{Z}$.  Let $Y'$ be another isolated 
surface. We claim that if the boundary of $Y'$ is close to a point in $g_-$, then
the interval $I_{Y'}$ appears after the interval $I_Y$. 

Let $I_Y =[a,b]$ and let $t \in I_{Y'}$. Then $t \not \in [a,b]$ because $Y$ and 
$Y'$ intersect (the distance between their boundaries is larger than $1$) 
and their boundaries can not be short simultaneously. Note that 
$\partial Y'$ are part of the short marking $\mu_t$. 
By \corref{Ends}, if $t<a$ then $\I(\mu_t \rY, \lambda_+\rY)=O(1)$. 
Hence,  
$$
\I(\mu_t \stroke{Z}, \lambda_+ \stroke{Z})=O(1)
\quad\text{and}\quad
d_Z(\mu_t, \lambda_+)=O(1).$$ 
But this is a contradiction because $\partial Y'$ is close to a point in $g_-$
which projects to a point in $\AC(Z)$ near $\lambda_- \stroke{Z}$. 
Therefore, $t >b$. 
\end{proof}

\begin{remark} \label{Rem:Ends}
Note that, using \corref{Ends}, we can restate the above statements
for \Teich geodesic segments $\calG \from [a,b] \to \calT(S)$ where
$\calG(a)$ and $\calG(b)$ are in the thick part. All statements hold after
replacing $\lambda_-$ and $\lambda_+$ with $\mu_a$ and $\mu_b$
respectively. 
\end{remark}

\section{No Back-tracking} \label{Sec:Backtrack}

As before, let $\calG$ be a \Teich geodesic with $\calG(t) = (x_t, q_t)$ and
let $\mu_t$ be the short marking associated to $q_t$. 
In this section we examine the projection of markings $\mu_t$
to the curve complex of a subsurface. 
 
\begin{theorem}
\label{Thm:No-Back-Tracking}
For every subsurface $Y$ of $S$, the shadow of $\calG$ in $\AC(Y)$ is an 
un-parametrized quasi-geodesic. That is, for $r\leq s \leq t \in \RR$
 $$
 d_Y(\mu_r, \mu_s)+d_Y(\mu_s, \mu_t) \ladd  d_Y(\mu_r, \mu_t).
 $$
\end{theorem}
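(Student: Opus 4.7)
The plan is to reduce the no-backtracking inequality to two inputs: the structure theorem \thmref{Proj} for the geodesic $\calG$, and the Masur--Minsky fact \cite{minsky:CCI} that a Teichm\"uller geodesic in $\calT(Y)$ projects to an un-parametrized quasi-geodesic in the Gromov-hyperbolic space $\AC(Y)$.

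First I would observe that the interval of isolation $I_Y$ is itself a single interval (possibly empty or unbounded): each factor $I_{\alpha, Y}$ is an interval by construction, and $I_Y = \bigcap_{\alpha \subset \bdy Y} I_{\alpha, Y}$ is the intersection of intervals. Write $I_Y = [c,d]$.

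Next I would run a case analysis on the positions of $r \leq s \leq t$ relative to $[c,d]$. If $[r,t]$ is disjoint from the interior of $I_Y$, then the first conclusion of \thmref{Proj} gives that $d_Y(\mu_r,\mu_s)$, $d_Y(\mu_s,\mu_t)$, and $d_Y(\mu_r,\mu_t)$ are all $O(1)$, and the inequality is immediate. Otherwise, $[r,t] \cap [c,d]$ is a nonempty interval $[c',d']$, and both $[r,c']$ and $[d',t]$ avoid the interior of $I_Y$. By \thmref{Proj}, replacing $r$ by $c'$, $t$ by $d'$, and (when $s \not\in [c,d]$) $s$ by whichever of $c', d'$ is nearer, changes the corresponding markings by only $O(1)$ in $\AC(Y)$. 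Applying the triangle inequality reduces the statement to the case in which $r \leq s \leq t$ all lie in $I_Y$.

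For this reduced case with $Y$ non-annular, \thmref{Proj} supplies a Teichm\"uller geodesic $\calF \from I_Y \to \calQ(Y)$ with $d_{\calT(Y)}(x_u \rY, y_u) = O(1)$ for every $u \in I_Y$. Since the projection $\calT(Y) \to \AC(Y)$ is coarsely Lipschitz, the shadow of $\mu_u$ in $\AC(Y)$ differs from that of $\calF(u)$ by $O(1)$. By the Masur--Minsky theorem applied to $Y$, the shadow of $\calF$ is an un-parametrized quasi-geodesic in the hyperbolic space $\AC(Y)$, and in a Gromov-hyperbolic space distances along such a quasi-geodesic are additive up to an additive error, yielding the claimed inequality. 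For annular $Y$ with core curve $\alpha$, \thmref{Proj} does not apply directly, but \eqnref{Twist} shows that $u \mapsto \twist_u(\alpha)$ is a monotonic function of $u$; hence $u \mapsto \mu_u \rY$ is coarsely monotonic in $\AC(Y)$ (which is quasi-isometric to $\ZZ$), and additivity is immediate.

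The main obstacle will be the annular case, which lies outside the scope of \thmref{Proj} since $\calQ(Y)$ is not well-defined for an annulus; the explicit monotonicity supplied by \eqnref{Twist} serves as a direct substitute. Beyond that, the argument is a clean assembly of \thmref{Proj} with the Masur--Minsky hyperbolicity of $\AC(Y)$.
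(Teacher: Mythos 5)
Your proposal is correct and follows essentially the same route as the paper's proof: bounded movement of the projection outside the interval of isolation $I_Y$ (first part of \thmref{Proj}), the Masur--Minsky unparametrized quasi-geodesic theorem applied to the geodesic $\calF$ in $\calT(Y)$ inside $I_Y$ (second part of \thmref{Proj}), and monotonicity of the twist from \eqnref{Twist} for the annular case. The only difference is presentational: you spell out the reduction to $r,s,t \in I_Y$ via the triangle inequality, and you should also note the base case $Y = S$, which is handled directly by Masur--Minsky without reference to $I_Y$.
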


\begin{remark}
We observe that the projection of $\mu_t$ to $\AC(Y)$ is a 
coarsely continuous path. That is, there is a constant $B$ so that
for every $t \in \RR$ there is a $\delta$ where
$$
\I(\mu_t, \mu_{t+\delta})\leq B \qquad\text{and hence}\qquad
d_Y(\mu_t, \mu_{t+\delta})=O(1).
$$
To see this, note that since lengths change continuously, $x_t$ and 
$x_{t+\delta}$ have the same thick-thin decompositions and the intersection
between moderate length curves in $x_t$ and $x_{t+\delta}$ is bounded. 
Also, twisting along the short curves changes coarsely continuously 
(see \eqnref{Twist}). 
\end{remark}

\begin{remark}
The reverse triangle inequality for a path (as given in the statement 
of the theorem) is a stronger condition than being a unparametrized 
quasi-geodesic.  However, in Gromov hyperbolic spaces 
such as $\AC(Y)$ the two conditions are equivalent. 
(See \cite[Section 7]{minsky:CCII} and \cite[Section 2.1]{masur:TS}
for relevant discussions.)
\end{remark}

\begin{remark}
This contrasts with the way geodesics behave in the Lipschitz 
metric on $\calT(S)$, studied by Thurston in \cite{thurston:MSM}, where the 
projection of a geodesic to a subsurface can backtrack 
arbitrarily far. (Examples can easily be produced using Thurston's construction 
of minimal stretch maps \cite{thurston:MSM} and the results in \cite{rafi:TL}). 
\end{remark}

\begin{proof}
If $Y=S$, the above is a theorem of Masur and 
Minsky~\cite[Theorem 3.3]{minsky:CCII}, that is, we already know that the 
shadow of $\calG$ to $\AC(S)$ is an unparametrized quasi-geodesic. 
Let $Y$ be a proper subsurface and consider the interval of isolation
$I_Y=[c,d]$. If $Y$ is not an annulus, by the first part of \thmref{Proj}, 
the shadow of $\calG(-\infty, c]$ and $\calG[d, \infty)$ have bounded diameter in 
$\AC(Y)$ and by the second part of \thmref{Proj} and again using 
\cite[Theorem 3.3]{minsky:CCII}, the shadow of $\calG[c,d]$ is an unparametrized
quasi-geodesic in $\AC(Y)$. It remains to check the case of an annulus. 
But in this case $\AC(Y)$ is quasi-isometric to $Z$ and we need only
to show that the twisting around the core of $Y$ is an increasing up to an 
additive error. This follows from \eqnref{Twist}. 
\end{proof}

\section{Fellow traveling}
\label{Sec:Fellow-Travel}

\begin{theorem} \label{Thm:FellowTravel}
There is a constant $D > 0$ so that, for points $x, \bx, y$ and $\by$  
in the thick part of $\calT(S)$ where
$$
d_\calT(x, \bx) \leq 1 \quad\text{and}\quad d_\calT(y, \by) \leq 1,
$$ 
the geodesic segments $[x,y]$ and $[\bx, \by]$ $D$--fellow 
travel in a parametrized fashion. 
\end{theorem}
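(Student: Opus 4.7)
The plan is to reduce parametrized fellow traveling to the pointwise criterion \corref{Bounded-Distance}. Parametrize both segments by arclength: $x_t\in\calT(S)$ for $t\in[0,L]$ with $x_0=x$ and $x_L=y$, and $\bx_{\bt}\in\calT(S)$ for $\bt\in[0,\bL]$ with $\bx_0=\bx$ and $\bx_{\bL}=\by$; the triangle inequality gives $|L-\bL|\leq 2$. For each $t$ I will produce a matching $\bt$ with $|t-\bt|=O(1)$ such that $x_t$ and $\bx_{\bt}$ meet all four hypotheses of \corref{Bounded-Distance}, yielding $d_\calT(x_t,\bx_{\bt})=O(1)$; combined with the Lipschitz parametrizations this gives the claimed $D$--fellow traveling. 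The setup begins by noting that since $x,\bx$ are thick and at distance $\leq 1$, every non-negative term on the right hand side of \eqnref{Distance} is $O(1)$; in particular $d_Y(\mu_x,\mu_{\bx})=O(1)$ for every subsurface $Y$, and analogously at the $y$--end.

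The key technical step is to transfer the endpoint comparison to a matching of the quadratic-differential data along the two geodesics. By \corref{Ends}, the thick endpoint markings play the role of $\lambda_\pm$, so the two geodesics carry essentially the same list of isolated subsurfaces and, by \thmref{M-Large}, the same short curves. The formulas \eqnref{Flat-Length}, \eqnref{Twist}, and \eqnref{Modulus} then encode the entire length-and-twist dynamics of a short curve $\alpha$ in three numbers: the twist capacity $T_\alpha\eadd d_\alpha(\mu_x,\mu_y)$, the minimum flat length $L_\alpha$, and the balanced time $t_\alpha$. Evaluating at the thick endpoints (where flat and hyperbolic lengths are bi-Lipschitz comparable) and transferring via the endpoint matching yields $T_\alpha\eadd\bT_\alpha$, $L_\alpha\emul\bL_\alpha$, and $t_\alpha\eadd\bt_\alpha$; in particular the intervals of isolation satisfy $I_Y\eadd\bI_Y$. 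This matching is the main obstacle, since the flat-geometric quantities are not directly accessible from the combinatorial endpoint data and must be reconstructed via \secref{Quadratic}.

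With the matching in hand I set $\bt=t$, corrected by $O(1)$ near the endpoints when $L\neq\bL$, and verify the four conditions of \corref{Bounded-Distance}. Condition $(1)$ (same short curves) follows because short curves at time $t$ are precisely the boundaries of isolated surfaces whose intervals $I_\alpha$ contain $t$, and these intervals match. Condition $(3)$ (comparable extremal lengths) is immediate from \eqnref{Flat-Length} together with the length conversion in \thmref{Length}. For Condition $(4)$ (bounded relative twist), split $\twist_\alpha(x_t,\bx_{\bt})$ through $\lambda_-$ and $\bar\lambda_-$ (which are $O(1)$--close in $\AC(\alpha)$ by step one and \corref{Ends}), apply \eqnref{Twist}, and note that the resulting bound $T_\alpha/\cosh^2(t-t_\alpha)$ is dominated by $1/\Ext_t(\alpha)$ via the middle term of \thmref{Length}. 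For Condition $(2)$ (bounded subsurface projection): if $t\notin I_Y$, \corref{Ends} places both $\mu_t\rY$ and $\bmu_{\bt}\rY$ near matching endpoint-marking projections, so step one finishes; if $t\in I_Y$, \thmref{Proj} identifies these projections with short markings of points on two fellow-traveling geodesics in $\calT(Y)$, and induction on the complexity of $S$ closes the argument.
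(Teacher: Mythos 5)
Your overall architecture is the same as the paper's: reduce to \corref{Bounded-Distance}, use the thick endpoints to compare flat lengths at times $0$ and $l$ (so that \eqnref{Flat-Length} forces $L_\alpha\emul\bL_\alpha$ and $t_\alpha\eadd\bt_\alpha$), get $T_\alpha\eadd\bT_\alpha$ from \corref{Ends} applied at both ends, and then verify the four conditions using \eqnref{Twist} and \eqnref{Modulus}. The step you flag as ``the main obstacle'' is handled exactly as you sketch it: part (1) of \thmref{Length} at the thick endpoints converts the $O(1)$ endpoint comparison into $\ell_{q_0}(\alpha)\emul\ell_{\bq_0}(\alpha)$ and $\ell_{q_l}(\alpha)\emul\ell_{\bq_l}(\alpha)$, and the $\cosh$ profile does the rest. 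The one place you genuinely diverge is Condition $(2)$. The paper's argument is direct and avoids subsurface case analysis entirely: since all flat lengths are comparable, $\size_{q_t}(Y)\emul\size_{\bq_t}(Y)$ for every thick piece $Y$, so a curve of extremal length $\emul 1$ in $x_t$ has extremal length $\emul 1$ in $\bx_t$; two curves of bounded extremal length on the same surface have bounded intersection number, hence the short markings $\mu_t$ and $\bmu_t$ intersect boundedly and project boundedly to \emph{every} subsurface at once. Your route for $t\in I_Y$ --- invoking \thmref{Proj} and then inducting on complexity --- is shakier: the inductive hypothesis requires the endpoints of the projected geodesics $\calF,\overline{\calF}$ in $\calT(Y)$ to be close \emph{and thick in $\calT(Y)$}, and neither is guaranteed (a curve inside $Y$ may be short at time $c$, and the intervals $I_Y$ and $\bI_Y$ only coarsely agree, so the projected geodesics' endpoints need not match up). That sub-argument would need real repair; you should replace it with the intersection-number argument above, after which the proof is complete and matches the paper.
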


\begin{remark}
The proof also works when either $x$ or $y$ is replaced with measured
foliation in $\PML(S)$ and $\calG$ and $\bG$ are infinite rays. 
\end{remark}

\begin{proof}
After adjusting $x$ and $y$ along the geodesic extension 
through $[x,y]$ by a bounded amount, we may assume that 
$d_\calT(x,y)=d_\calT(\bx,\by)$.  Let
$$
\calG \from [0,l] \to \calT(S) \quad\text{and}\quad 
\bG \from [0,l] \to \calT(S)
$$
be \Teich geodesics connecting $x$ to $y$ and $\bx$ to $\by$ respectively;
$\calG(t) = (x_t, q_t)$ and $\bG(t) = (\bx_t, \bq_t)$.

We first show that, for any curve $\alpha$, 
$\ell_{q_t}(\alpha)\emul \ell_{\bq_t}(\alpha)$. 

Since $x$ and $\bx$ are both in the thick part, for every curve $\alpha$
we have (part (1) of \thmref{Length})
$$
\Ext_x(\alpha) \emul l_{q_0}(\alpha)^2
\quad\text{and}\quad
\Ext_\bx (\alpha) \emul l_{\bq_0}(\alpha)^2.
$$
But $d_\calT(x, \bx)=1$. Therefore, 
$$
\Ext_x(\alpha) \emul \Ext_\bx(\alpha)
\quad\Longrightarrow\quad
l_{q_0}(\alpha) \emul l_{\bq_0}(\alpha).
$$ 
The same argument works to show that 
$l_{q_l}(\alpha) \emul l_{\bq_l}(\alpha)$.
The flat length of a curve is essentially determined by two parameters. 
From \eqnref{Flat-Length} we have 
$\ell_{q_t}(\alpha) \emul L_\alpha \cosh(t-t_\alpha)$ and  
$\ell_{\bq_t}(\alpha) \emul \bL_\alpha \cosh(t-\bt_\alpha)$. 
Since, the flat lengths of $\alpha$ are comparable at the beginning
and the end they are always comparable. That is, 
$L_\alpha \emul \bL_\alpha$ and $t_\alpha \eadd \bt_\alpha$. 

We use \corref{Bounded-Distance} to prove $d_\calT(x_t, \bx_t)=O(1)$
by checking the four conditions. 

\subsection*{Condition (1)} We need to show that $q_t$ and $\bq_t$ have the
same thick-thin decompositions. Fix an $\ep$ and let $(\calA, \calY)$
be the $(\ep, \ep)$--thick-thin decomposition of $x_t$. 
Let $\alpha \in \calA$ and let $\sE, \sF$ and $\sG$ be as in \thmref{Length}.
Since $\alpha$ is short, one of $\sE$, $\sF$ or $\sG$ must have a large 
modulus. That is, for every curve $\beta$ intersecting $\alpha$,
we have
$$
\frac{\ell_{q_t}(\beta)}{\ell_{q_t}(\alpha)} \gmul \frac 1\ep.
$$
(In fact it may be larger than $e^{1/e}$.) 
Since the flat length in $q_t$ and $\bq_t$ are comparable, we also have
$$
\frac{\ell_{\bq_t}(\beta)}{\ell_{\bq_t}(\alpha)} \gmul \frac 1\ep.
$$
We show the extremal length of $\alpha$ is small in $\bx_t$. If not, $\alpha$ 
would pass through some thick piece of $\bx_t$ and it would intersect some 
curve $\beta$ with $\Ext_{\bx_t}(\beta) \lmul 1$. That is, 
$\Ext_{\bx_t}(\beta) \lmul \Ext_{\bx_t}(\alpha)$. 
Part (1) of \thmref{Length} implies $\ell_{\bq_t}(\beta) \lmul \ell_{\bq_t}(\alpha)$ 
which is a contradiction.  That is, there is an $\ep_0$ so that if 
$Ext_{x_t}(\alpha) \leq \ep$ then $\Ext_{\bx_t}(\alpha) \leq \ep_0$.

Arguing in the other direction, we can find $\ep_1$ so that if 
$\Ext_{\bx_t}(\alpha) \leq \ep_1$ then $\Ext_{x_t}(\alpha) \leq \ep$. 
That is, every curve not in $\calA$ is $\ep_1$--thick in $\bx_t$. 
This proves that $(\calA, \calY)$ is a $(\ep_0, \ep_1)$--thick-thin
decomposition for $\bx_t$. 

\subsection*{Condition (2)} 
The size of a surface $Y \in \calY$ is the flat length of the shortest 
essential curve in $Y$. Hence, we have 
$\size{q_t}(Y) \emul \size_{\, \bq_t}(Y)$. 
Now, \thmref{Length} implies that, for every curve $\gamma$ in $Y$,  
if $\Ext_{x_t}(\gamma) \emul1$ then $\Ext_{\bx_t}(\gamma) \emul1$
as well. But two curves of length one have bounded intersection numbers. 
Hence, they have bounded projection to every subsurface $Z$. This means 
$d_Z(\mu, \bmu)=O(1)$.

\subsection*{Condition (3)}  For each $\alpha \in \calA$, as we saw before,
$L_\alpha \emul \overline{L}_\alpha$ and $t_\alpha \eadd \bar t_\alpha$. 
We now show that $T_\alpha \eadd \overline{T}_\alpha$. 
Since the end points of $\calG$ and $\bG$ are close, we have
\begin{align*}
&d_\alpha(\mu_0, \bmu_0)=O(1)
&&\text{and}
&& d_\alpha(\mu_l, \bmu_l)=O(1).\\
\intertext{Also, from \corref{Ends} we have}
&d_\alpha(\mu_0, \lambda_-)=O(1), \quad 
&& &&d_\alpha(\mu_l, \lambda_+)=O(1),\\
&d_\alpha(\bmu_0, \blambda_-)=O(1)
&&\text{and}
&&d_\alpha(\bmu_l, \bmu_+)=O(1).
\end{align*}
Hence, using the triangle inequality,
$$
T_\alpha=d_\alpha(\lambda_-, \lambda_+) \eadd d_\alpha(\mu_0, \mu_l) \eadd 
d_\alpha(\bmu_0, \bmu_l) \eadd d_\alpha(\blambda_-, \blambda_+)=
\overline{T}_\alpha.
$$
Now \eqnref{Modulus} implies 
\begin{equation} \label{Eq:F-Equal}
\Mod_{x_t}(\sF_t) \emul \Mod_{\bx_t}(\overline \sF_t).
\end{equation}
Also, as seen above, the size of all subsurfaces are comparable
in $q_t$ and $\bq_t$. Therefore, by \thmref{Length}
$\Ext_{x_t}(\alpha) \emul \Ext_{\bx_t}(\alpha)$.

\subsection*{Condition (4)}
We show that $\twist_\alpha(q_t, \bq_t) \Ext_{x_t}(\alpha) \emul 1$. Note that,
since $d_\alpha(\lambda_-, \blambda_-)=O(1)$, 
\begin{equation} \label{Eq:Difference}
\twist_\alpha(q_t, \bq_t) \eadd 
|\twist_\alpha(q_t, \lambda_-) - \twist_\alpha(\bq_t, \blambda_-)|.
\end{equation}
Denote $\twist_\alpha(q_t, \bq_t)$ (as before) by $\twist_t(\alpha)$
and denote $\twist_\alpha(\bq_t, \blambda_-)$ by $\btwist_t(\alpha)$.
We use \eqnref{Twist} and the facts $|t_\alpha-\bt_\alpha|=O(1)$
and $|T_\alpha -\bT_\alpha|=O(1)$ to estimate the right hand side
of \eqnref{Difference}. 

If $t \ladd t_\alpha$ (and hence $ t \ladd \bt_\alpha$), then 
$$
\twist_t(\alpha) \lmul \frac{T_\alpha}{\cosh^2(t-t_\alpha)} 
\qquad\text{and}\qquad
\btwist_t(\alpha) \lmul \frac{T_\alpha}{\cosh^2(t-t_\alpha)}.
$$
But $\Ext_t(\alpha) \lmul \frac 1 {\Mod(\sF_t)}$.   Thus using \eqnref{Modulus}
$$
\Big|\twist_t(\alpha) - \btwist_t(\alpha) \Big| \Ext_t(\alpha) 
  \lmul \frac{T_\alpha}{\cosh^2(t-t_\alpha)}  \frac{\cosh^2(t-t_\alpha)}{T_\alpha}
   \lmul 1
$$

If $t \gadd t_\alpha$, then 
$$
T_\alpha - \twist_t(\alpha) \lmul \frac{T_\alpha}{\cosh^2(t-t_\alpha)} 
\quad\text{and}\quad
T_\alpha - \btwist_t(\alpha) \lmul \frac{T_\alpha}{\cosh^2(t-t_\alpha)}.
$$
Hence, as before, 
\begin{align*}
\Big|\twist_t(\alpha) - \btwist_t(\alpha)\Big| \Ext_t(\alpha) 
 & \lmul 
 \frac{\Big|(T_\alpha -\twist_t(\alpha)) - (T_\alpha -\btwist_t(\alpha)\Big|}
    {\Mod_t(\alpha)}\\ &\\
 & \lmul  \frac{T_\alpha}{\cosh^2(t-t_\alpha)} \frac{\cosh^2(t-t_\alpha)}{T_\alpha}
   \lmul 1.
\end{align*}
That is, the last condition in \corref{Bounded-Distance} holds and
$d_\calT(q_t, \bq_t)=O(1)$. This finishes the proof. 
\end{proof}

We now construct the counterexample. 

\begin{theorem} \label{Thm:Not-FL}
For every constant $\d > 0$, there are points 
$x, y, \bx$ and $\by$ in $\calT(S)$ so that
$$
d_\calT(x, \bx) = O(1) \quad\text{and}\quad d_\calT(y, \by) =O(1),
$$
and
$$
d_\calT\big( [x,y], [\bx, \by] \big) \gmul \d.
$$
\end{theorem}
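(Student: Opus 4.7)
The plan is to exploit that in the thin part of $\calT(S)$, high powers of a Dehn twist about a short curve are cheap in Teichm\"uller distance but substantially alter the relative twisting data carried by the associated Teichm\"uller geodesic. The divergence will appear at an interior time where one geodesic has $\alpha$ much shorter than the other.

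Fix a simple closed curve $\alpha \subset S$. For each large $N$, choose a thick point $x \in \calT(S)$ and a point $y$ with $\Ext_y(\alpha) = \ep$ where $N\ep \leq 1$, the rest of whose thick-thin decomposition lies in the thick part, and with the transversal data of $\mu_y$ around $\alpha$ chosen so that $d_\alpha(\mu_x, \mu_y) = O(1)$. Set $\bx = x$ and $\by := T_\alpha^N(y)$, where $T_\alpha$ is the Dehn twist about $\alpha$. Applying \thmref{Distance}: the pants decompositions of $\mu_y$ and $\mu_\by$ coincide, the curves of $\calP_y$ are fixed by $T_\alpha$ (they are disjoint from $\alpha$), and every subsurface- and non-$\alpha$-annular projection term vanishes; only the hyperbolic-plane term at $\alpha$ survives, giving $d_\calT(y, \by) \emul d_\HH((1/\ep, 0), (1/\ep, N)) \emul N\ep = O(1)$. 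Trivially $d_\calT(x, \bx) = 0$.

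Let $\calG = [x, y]$ and $\bG := [x, \by]$, with horizontal and vertical foliations $\lambda_\pm$ and $\blambda_\pm$. At the shared thick endpoint $x$, \corref{Ends} gives $d_\alpha(\lambda_-, \blambda_-) = O(1)$. At the thin endpoint, the Dehn twist shifts the $\alpha$-projection of the horizontal foliation by $N$ horocyclic units, so $d_\alpha(\lambda_+, \blambda_+) \eadd N$. Consequently the relative twists satisfy $T_\alpha := d_\alpha(\lambda_-, \lambda_+) = O(1)$ (by choice of $\mu_y$) while $\bT_\alpha := d_\alpha(\blambda_-, \blambda_+) \eadd N$.

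By \eqnref{Modulus}, the maximum modulus of the flat cylinder $\sF_t$ along $\calG$ is $\emul T_\alpha = O(1)$, so $\alpha$ is short on $\calG$ only near the endpoint $y$; along $\bG$, the corresponding maximum is $\emul N$, attained at an interior balanced time $\bt_\alpha$ where \thmref{Length}(2) gives $\Ext_{\bG(\bt_\alpha)}(\alpha) \emul 1/N$. Since $\bt_\alpha$ lies in the interior (away from $l$) of $\calG$'s thick-at-$\alpha$ portion, and extremal length is distorted by at most a factor $e^{2 d_\calT}$ under quasi-conformal deformation, we obtain $d_\calT(\bG(\bt_\alpha), \calG(\bt_\alpha)) \gmul \log N$, and hence $d_\calT([x, y], [\bx, \by]) \gmul \log N$. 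Choosing $N$ with $\log N \gg \d$ completes the construction. The main obstacle is rigorously justifying $d_\alpha(\lambda_+, \blambda_+) \eadd N$ at the thin endpoint (since \corref{Ends} applies directly only when both endpoints are thick) and locating $\bt_\alpha$ safely inside the thick portion of $\calG$; both are handled using Minsky's product region theorem \cite{minsky:PR} to identify the $\alpha$-component of each geodesic near the cusp with a hyperbolic-plane geodesic whose endpoint has been horocyclically shifted by $N$, where explicit computation in $\HH$ gives the claimed separation of balanced times and extremal lengths.
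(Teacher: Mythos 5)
Your construction (twisting $N$ times about a curve $\alpha$ that is $\ep$--short at the endpoint, with $N\ep\lmul 1$) is genuinely different from the paper's, and the first half is fine: the distance formula does give $d_\calT(y,T_\alpha^N y)=O(1)$, and the annular coefficient $d_\alpha(\lambda_-,\lambda_+)$ does jump from $O(1)$ to $\eadd N$, so by \eqnref{Modulus} the flat cylinder of $\alpha$ acquires modulus $\emul N$ at the balanced time $\bt_\alpha$ of $\bG$. The gap is in the last step, and it is not just the technical point you flag. First, you only compare $\calG$ and $\bG$ \emph{at the same parameter time} $\bt_\alpha$; the theorem requires a point of one geodesic to be $\gmul\d$ from the \emph{entire} other geodesic. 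That extra step cannot be skipped, because $[x,y]$ also contains points where $\Ext(\alpha)\lmul 1/N$ (near $y$, since $\Ext_y(\alpha)=\ep\leq 1/N$), so the extremal-length comparison alone does not separate $\bG(\bt_\alpha)$ from the set $[x,y]$. Worse, the $\alpha$--coordinate cannot do the job at all: at $\bG(\bt_\alpha)$ the twist is $\eadd N/2$ and $1/\Ext(\alpha)\emul N$, and a point of $\HH$ at height $\emul N$ and horizontal displacement $\emul N$ is at \emph{bounded} hyperbolic distance from the vertical geodesic traced by the $\alpha$--coordinate of $\calG$. So the divergence, if it exists, must be detected in $\calT(S\setminus\alpha)$, and your construction places no constraints there; if $x$ and $y$ restrict to nearby points of $\calT(S\setminus\alpha)$, your two geodesics may well be Hausdorff-close. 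Finally, the location of $\bt_\alpha$ (which you need to be $\gmul\d$ from both ends) is not established, and it is not clear it can be: the efficient way to perform $N$ twists is when $\alpha$ is shortest, i.e.\ near the endpoint.

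The paper's construction is designed precisely to control the two things your argument leaves open. It glues slit tori so that a non-annular subsurface $Y$ has interval of isolation $[0,\d]$ along $\calG$ but $[\d,2\d]$ along $\bG$; then $d_Y(\mu_{x_\d},\mu_{\bx_\d})\emul\d$ because one geodesic has completed a parametrized quasi-geodesic of length $\d$ in $\AC(Y)$ while the other has not started, giving $d_\calT(x_\d,\bx_\d)\gmul\d$ at a common time. It then upgrades this to distance from the whole segment $[\bx,\by]$ by a triangle-inequality argument using $d_\calT(x_0,\bx_0)=O(1)$ and $d_\calT(x_{2\d},\bx_{2\d})=O(1)$: for $t<\d$ one sums $d(x_\d,\bx_t)+d(\bx_t,\bx_0)\gadd\d$ and $d(x_\d,\bx_t)+d(\bx_t,\bx_\d)\geq d(x_\d,\bx_\d)$ to get $2d(x_\d,\bx_t)\gadd d(x_\d,\bx_\d)$. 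If you want to salvage your approach you would need (i) to force $d_{\calT(S\setminus\alpha)}(x,y)\gmul\d$ and arrange the balanced time $\bt_\alpha$ to sit in the middle of that motion, and (ii) to import an endpoint-to-endpoint triangle argument of this kind; as written, the proof does not establish the conclusion.
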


\begin{proof}
For a given $\d$, we construct quadratic differentials $q_0$ and $\bq_0$
with the following properties: Let $q_t$ be the image
of $q_0$ under the \Teich geodesic flow and let $x_t$ be the underlying 
conformal structures of $q_t$. Let $\bq_t$ and $\bx_t$ be defined
similarly. We will show that 
$$
d_\calT(x_0, \bx_0)= O(1),  \qquad 
d_\calT(x_{2\d}, \bx_{2\d})= O(1), 
$$
and
$$
d_\calT(x_\d, \bx_\d) \gmul \d. 
$$
This is sufficient to show that 
$d_\calT (x_\d,\bx_{t})\gmul \d$ for any $t \in [0,2\d]$. 
To see this note that, for any $0\leq t<\d$, we have
$$
d_\calT(x_\d, \bx_t)+ d_\calT(\bx_t, \bx_0) \gadd \d
\quad\text{and}\quad
d_\calT(x_\d, \bx_t)+ d_\calT(\bx_t, \bx_\d) \gadd d_\calT(x_\d, \bx_\d).
$$
Summing up both sides, we get
$$
2d_\calT(x_\d, \bx_t) + d_\calT(\bx_0, \bx_\d) \gadd \d + d_\calT(x_\d, \bx_\d). 
$$
Hence, 
$$
2d_\calT(x_\d, \bx_t) \gadd d_\calT(x_\d, \bx_d).
$$ 
A similar argument works for $\d < t \leq 2\d.$

Let $S$ be a surface of genus $2$, $\gamma$ be a separating curve in 
$S$ and $Y$ and $Z$ be the components of $S \setminus \gamma$. 
Consider a pseudo-Anosov map $\phi$ on a torus and choose a flat torus
$T$ on the axis of $\phi$ so that the vertical direction in $T$ matches the
unstable foliation of $\phi$. Cut open a slit in $T$ of size $\ep = c \, e^{-\d/2}$ 
and of angle $\pi/4$ (The constant $0<c<1$ is to be specified below). 
Fix a homeomorphism from $Y$ to this slit torus
and call this marked flat surface $T_0$. Define
$$
T_t= \begin{bmatrix} e^t & 0 \\ 0 & e^{-t} \end{bmatrix} T_0.
$$
Note that $T_t$ is still a marked surface.  The length of the slit
is minimum at $t=0$ and grows exponentially as $t \to \pm \infty$. 
For $-\d/2 \leq t \leq \d/2$, the length of the slit is smaller than $c$
but the length of shortest essential curve in $T_t$ in this interval is comparable 
with $1$. Hence, for $c$ small enough, $M_t(\gamma, Y) \geq m_0$
(see \secref{Restriction}) and $T_t$ looks like an isolated subsurface. 

Now choose $\delta \ll \ep$ 
(specified below) and let $q_0$ be the quadratic differential defined by 
gluing $T$ to $\delta \, T_{-\d/2}$. What we mean by this is 
that we first scale down $T_{-\d/2}$ by a factor $\delta$. Then we cut open a slit 
in $T$ of the same size and angle as the size of the slit in $\delta T_{-\d/2}$ and 
then glue these two flat tori along this slit. Fixing a homeomorphism from $Z$ to 
$T$ slit open, we obtain a marking for $q_0$ that is well defined up to 
twisting around $\gamma$. Let $\calG \from [0,2\d] \to \calT(S)$ be the
\Teich geodesic segment defined by $q_0$. 

Construct $\bq_0$ in the similar fashion by gluing $T$ to $\delta \, T_{-3\d/2}$. 
Now choose the marking map from $S$ to $\bq_0$ so that $q_0$
and $\bq_0$ have bounded relative twisting around $\gamma$. 
Let $\bG \from [0,2\d] \to \calT(S)$ be the
\Teich geodesic segment defined by $\bq_0$. 

Recall that, for $-\d/2 \leq t \leq \d/2$, the subsurface $\delta T_t$ is isolated
(scaling by $\delta$ does not change the value of $M_t(\alpha, Y)$) 
and by \thmref{Restriction} the projection of $T_t$ to the \Teich space of $Y$ 
fellow travels a \Teich geodesic. However, for $t>\d/2$ and $t<-\d/2$, the 
projection to the curve complex of $Y$ changes by at most a bounded amount.
That is, the interval of isolation for $Y$ along $\calG$, $I_Y =[0,\d]$ and
along $\bG$, $\bI_Y =[\d,2\d]$. In particular, 
$$
d_Y(q_0, \bq_0)=O(1) \quad\text{and}\quad d_Y(q_{2\d}, \bq_{2\d})=O(1).
$$ 
Also, since no curve in $Y$ or $Z$ is ever short (the vertical and the horizontal
foliation in $Y$ and $Z$ are co-bounded), the twisting parameters around any 
curves inside $Y$ or $Z$ are uniformly bounded. Projections of $q_0$ and 
$\bq_0$ to $Z$ are identical and $\gamma$ is short in both $q_0$ and $\bq_0$.
Therefore, to show $d_\calT(x_0, \bx_0)=O(1)$, it remains to show 
(\corref{Bounded-Distance}) that the extremal lengths of
$\gamma$ in $x_0$ and $\bx_0$ are comparable. We have
(\thmref{Length})
$$
\Ext_{x_0}(\gamma) \eadd \log \frac 1\delta 
\qquad\text{and}\qquad
\Ext_{\bx_0}(\gamma) \eadd \log \frac 1{e^\d \delta} =  \log \frac 1\delta - \d.
$$
But these quantities are comparable for $\delta$ small enough. A similar
argument shows that $d_\calT(x_{2\d}, \bx_{2\d})=O(1)$. 
Since $Y$ is isolated in $q_t$ for $0\leq t \leq \d$ the shadow
to the $\AC(Y)$ is an unparametrized quasi-geodesic. In fact, 
since no curve is short in $Y$ in that interval, the shadow is a parametrized
quasi-geodesic  (\cite[Lemma 4.4]{rafi:CC}). That is
$$
d_{\calT(Y)}(x_0, x_\d) \emul \d.
$$
But the interval of isolation for $Y$ along the geodesic $\bG$ is 
$[\d, 2\d]$. Therefore, 
$$
d_Y(\bx_0, \bx_\d) =O(1).
$$
As before, we have $d_Y(x_0, \bx_0)=O(1)$. Hence
$$
d_Y(q_\d, \bq_\d)\emul \d.
$$
Now, by \thmref{Distance}, we have
$$
d_\calT(x_\d, \bx_\d) \gmul d_Y(x_\d, \bx_\d) \emul \d. 
$$
This finishes the proof. 
\end{proof}

\section{Thin triangles}
\label{Sec:Thin}

Let $x$, $y$ and $z$ be three points in $\calT(S)$ and
let $\calG \from [a,b] \to \calT(S)$ be the \Teich geodesic
connecting $x$ to $y$. In this section we prove Theorem~\ref{Thm:Thin}
from the introduction. 

\begin{theorem} \label{Thm:Thin-Triangle}
For every $\ep$, there are constants $C$ and $D$ so that the following holds.
Let $[c,d]$ be a subinterval of $[a,b]$ with $(d-c)>C$ so that for every $t \in [c,d]$, 
$\calG(t)$ is in the $\ep$--thick part of $\calT(S)$.
Then, there is a $w \in [\calG(c), \calG(d)]$ where 
$$
\min \Big(d_\calT\big(w, [x,z]\big), d_\calT\big(w, [x,z]\big) \Big) \leq D.
$$
\end{theorem}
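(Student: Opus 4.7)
The strategy is to combine Gromov hyperbolicity of $\AC(S)$ with the structural description of \Teich geodesics from Sections~\ref{Sec:Proj}--\ref{Sec:Backtrack}. The heuristic is that on the thick part of $\calT(S)$ the geometry resembles that of a hyperbolic space, so triangles ought to be slim there.

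First I would project the three sides of the triangle to $\AC(S)$. By \thmref{No-Back-Tracking} each projection is an unparametrized quasi-geodesic with uniform constants, so Gromov hyperbolicity of $\AC(S)$ forces the projected triangle to be $\delta'$--slim for a uniform $\delta'$. Because $\calG\vert_{[c,d]}$ lies in the $\ep$--thick part, no proper subsurface $Y$ can be isolated at any time $t\in[c,d]$: thickness controls both $\size_{q_t}(Y)$ and $\ell_{q_t}(\bdy Y)$, so $M_t(\bdy Y,Y)=O(1)<m_0$ once $m_0$ is chosen large. Hence $I_Y\cap[c,d]=\emptyset$ for every proper $Y$, and combining this with the fact that a \Teich geodesic in the thick part projects to a parametrized quasi-geodesic in $\AC(S)$ (as used in the proof of \thmref{Not-FL} via \cite[Lemma~4.4]{rafi:CC}) shows that the shadow $\{\mu_t : t\in[c,d]\}$ traverses a distance $\emul d-c$ in $\AC(S)$. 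Choosing $C$ larger than $\delta'$ plus the relevant additive constants, the slim triangle property produces a time $t^*\in[c,d]$ and a point $w'$ on one of the other two sides---WLOG $w'\in[x,z]$---with $d_{\AC(S)}(\mu_{\calG(t^*)},\mu_{w'})=O(1)$.

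Second, writing $w=\calG(t^*)$, I would upgrade this $\AC(S)$--closeness to a bound $d_\calT(w,w')=O(1)$ by checking the hypotheses of \corref{Bounded-Distance}. Since $w$ is thick, conditions $(1)$, $(3)$ and $(4)$ all reduce to arranging that $w'$ is thick as well. If the initially chosen $w'$ carries a short curve $\alpha$, then $\alpha\in\mu_{w'}$ lies in a bounded $\AC(S)$--neighborhood of $\mu_w$; by \thmref{M-Large} applied to $[x,z]$, $\alpha$ bounds a subsurface filled by subsurfaces with large projection, and \propref{Order} together with \thmref{Bounded} locates the thin interval $I_\alpha\subset[x,z]$ near the time parameter of $w'$. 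Sliding $w'$ to the boundary of $I_\alpha$---where $\Ext(\alpha)\emul\ep$---produces a thicker candidate while keeping the $\AC(S)$--distance to $\mu_w$ bounded, and iterating finitely many times (the number of simultaneously short curves is bounded in terms of $S$) yields a fully thick $w'$. Condition $(2)$ on subsurface projections then follows from \thmref{Bounded} when $\bdy Y$ is far from $\mu_w$ in $\AC(S)$, and from the absence of isolation at $w$ and at $w'$ together with \thmref{Proj} when $\bdy Y$ is $\AC(S)$--near $\mu_w$.

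The main obstacle will be the adjustment in the second step: choosing a thick $w'\in[x,z]$ whose short marking remains $\AC(S)$--close to $\mu_w$. The delicate point is that the thin intervals $I_\alpha$ for distinct short curves along $[x,z]$ can nest or overlap, so one must exploit the combinatorial ordering from \propref{Order} and the bounded motion of subsurface projections outside intervals of isolation (\thmref{LackOfMotion}) to argue that somewhere inside the $\AC(S)$--neighborhood of $\mu_w$ the geodesic $[x,z]$ passes through a genuinely thick moment. This interplay between curve-complex hyperbolicity and the product-like geometry of the thin part is exactly what distinguishes \Teich space from a truly hyperbolic space, and it is the source of the hypothesis that $[c,d]$ lie in the thick part.
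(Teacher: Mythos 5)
Your first step matches the paper's: project to $\AC(S)$, use \thmref{No-Back-Tracking} together with hyperbolicity of $\AC(S)$, and use the fact that the shadow of $[\calG(c),\calG(d)]$ is a \emph{parametrized} quasi-geodesic to locate a point $w$ on the thick segment whose short marking is $\AC(S)$--close to the shadow of one of the other two sides. The gap is in your second step. The paper never needs to ``slide'' the companion point: it chooses $w$ so that $\mu_w$ (and hence the nearby marking $\mu_u$ with $u\in[x,z]$) lies \emph{far from the third side} $[\mu_y,\mu_z]$, and then shows $u$ is automatically thick. The mechanism, which your proposal omits, is: for any subsurface $Y$ with $\bdy Y$ close to $\mu_u$ in $\AC(S)$, the bounded geodesic image theorem (\thmref{Bounded}) gives $d_Y(\mu_y,\mu_z)=O(1)$ because $[\mu_y,\mu_z]$ stays far from $\bdy Y$; and $d_Y(\mu_x,\mu_y)=O(1)$ because otherwise \thmref{M-Large} would force $\bdy Y$ to be short at some $v\in[x,y]$, which, by the parametrized shadow over $[c,d]$, would be $\AC(S)$--far from $\mu_w$, contradicting $d_S(\bdy Y,\mu_w)=O(1)$. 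The triangle inequality then bounds $d_Y(\mu_x,\mu_z)$ for all such $Y$, so $u$ is thick by \thmref{M-Large}; the same bounds combined with \thmref{No-Back-Tracking} on each side give $d_Y(\mu_u,\mu_w)=O(1)$ for \emph{every} subsurface, which is what \corref{Bounded-Distance} needs.

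Your sliding argument does not substitute for this. If some $Y$ with $\bdy Y$ near $\mu_w$ had $d_Y(\mu_x,\mu_z)$ enormous, then $[x,z]$ would spend a long stretch with various curves at bounded $\AC(S)$--distance from $\bdy Y$ short, so exiting one thin interval you may immediately enter another; the number of such successive intervals is not controlled by the number of \emph{simultaneously} short curves, and nothing in your argument guarantees that a thick point of $[x,z]$ within bounded $\AC(S)$--distance of $\mu_w$ exists at all. Ruling out precisely this scenario is the content of the paper's use of the sides $[x,y]$ and $[y,z]$, which your proposal never invokes (you flag this difficulty in your closing paragraph but leave it unresolved). A secondary problem: your verification of condition $(2)$ of \corref{Bounded-Distance} for subsurfaces with $\bdy Y$ near $\mu_w$ appeals to \thmref{Proj}, but that theorem bounds projections of markings along a \emph{single} geodesic, whereas $w$ and $w'$ lie on different geodesics; the paper obtains these bounds by first bounding $d_Y(\mu_x,\mu_y)$ and $d_Y(\mu_x,\mu_z)$ and then applying \thmref{No-Back-Tracking} to each side separately before using the triangle inequality.
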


\begin{proof}
Consider the shadow map from $\calT(S)$ to the curve comlex
$\AC(S)$ sending a point $x$ to its short marking $\mu_x$. 
The geodesic triangle $\triangle(\mu_x,\mu_y,\mu_z)$ in the arc
and curve complex $\AC(S)$ is $\delta$--slim. Since the shadow of
$[x,y]$ is a quasi-geodesic (Theorem~\ref{Thm:Shadow})
for any $w \in [x,y]$, $\mu_w$ is $\delta$--close to the geodesic
$[\mu_x, \mu_y]$ in $\AC(S)$. That is, for every $w\in[x,y]$,
there is a Riemann surface $u$ in either $[x,z]$ or $[y,z]$ so that 
$d_S(\mu_w, \mu_u)\leq 3\delta$. 

The projection of $[\calG(c), \calG(d)]$ to $\AC(S)$ is in fact 
a parametrized quasi-geodesic (\cite[Lemma 4.4]{rafi:CC}).
Hence, by making $C$ large, we can assume that the shadow of 
$[\calG(c), \calG(d)]$ is as long as we like. Thus, we can choose 
$w \in [\calG(c), \calG(d)]$ so that $\mu_w$ is far from either 
the shadow of $[x,z]$ or the shadow of $[y,z]$. 
To summarize, without loss of generality, we can assume that 
there is a $w \in [\calG(c), \calG(d)]$ and a $u \in [x,z]$ so that 
$d_S(\mu_w, \mu_u)=O(1)$ and that neither $\mu_u$ nor $\mu_w$
is in the $(10\delta)$--neighborhood of the geodesic $[\mu_y,\mu_z]$.

We claim that $u$ is in the thick part of \Teich space. Using 
\thmref{M-Large} it is enough to show, for every
subsurface $Y$ whose boundaries are close to $\mu_u$ in $\AC(S)$, that
$d_Y(\mu_x, \mu_z)=O(1)$. Since $\mu_u$ is far away from 
$[\mu_y, \mu_z]$, \thmref{Bounded} implies that
$d_Y(\mu_y, \mu_z)=O(1)$. To prove the claim, we need to show that 
\begin{equation} \label{d_Y}
d_Y(\mu_x, \mu_y)=O(1).
\end{equation}

We prove \eqref{d_Y} by contradiction. Assume $d_Y(\mu_x, \mu_y)$ is large. 
By \thmref{M-Large}, $\partial Y$ is short at some point $v \in[x,y]$.
But the shadow of $[x,y]$ is a quasi-geodesic
and the shadow of $[\calG(c), \calG(d)]$ is a parametrized quasi-geodesic. 
Hence, by choosing $C$ large enough, we can conclude that, for any
such subsurface, $d_S(\partial Y, \mu_w)\gadd d_S(\mu_v, \mu_w)$ is large. 
This contradicts the fact that 
$$
d_S(\partial Y, \mu_u)=O(1)
\quad\text{and}\quad d_S(\mu_u, \mu_w)=O(1).
$$
Hence, \eqref{d_Y} holds and thus $u$ is in the thick part of \Teich space. 

We now claim, for any subsurface $Y\subset S$, that 
$$
d_Y(\mu_u, \mu_w)=O(1).
$$
This is because any such subsurface $Y$ should appear near the 
curve complex geodesic connecting $\mu_u$ and $\mu_w$ and hence
$\partial Y$ has a bounded distance from $\mu_w$ in $\AC(S)$. As before, 
assuming $d_Y(\mu_x, \mu_y)$ is large will result in a contradiction. 
Thus,  $d_Y(\mu_x, \mu_y)=O(1)$. Since $\mu_u$ is far from the geodesic 
$[\mu_y,\mu_z]$, the bounded projection theorem implies that 
$d_Y(\mu_y, \mu_z)=O(1)$ and by the triangle inequality, 
$d_Y(\mu_x, \mu_z)=O(1)$. On the other hand, by \thmref{No-Back-Tracking}
$$
d_Y(\mu_x, \mu_y)= O(1) \quad\Longrightarrow\quad d_Y(\mu_x, \mu_w)= O(1)
$$
and
$$
d_Y(\mu_x, \mu_z)= O(1) \quad\Longrightarrow\quad d_Y(\mu_x, \mu_u)= O(1).
$$
The triangle inequality implies $d_Y(\mu_w, \mu_u)= O(1)$. This proves the claim.

We have $w$ and $u$ are both in the thick part and that all subsurface projections
between $\mu_u$ and $\mu_w$ are uniformly bounded. 
\corref{Bounded-Distance} implies that $d_\calT(u,w)=O(1)$. 
\end{proof}

\bibliographystyle{alpha}
\bibliography{../main}

\end{document}